\newtheorem{nn}{}
\newtheorem{thm}[nn]{Theorem}
\newtheorem{lem}[nn]{Lemma}
\newtheorem{prop}[nn]{Proposition}
\newtheorem{cor}[nn]{Corollary}
\newtheorem{rem}[nn]{Remark}
\newtheorem{dfn}[nn]{Definition}
\newtheorem*{claim*}{Claim}
\newcommand{\setcond}[2]{\left\{ #1\,:\, #2\right\}}
\newcommand{\sprod}[2]{\left< #1\, , \, #2\right>}
\newcommand{\N}{\mathbb{N}}
\newcommand{\Z}{\mathbb{Z}}
\newcommand{\R}{\mathbb{R}}
\newcommand{\im}{\operatorname{im}}
\newcommand{\cl}{\operatorname{cl}}
\newcommand{\cS}{\mathcal{S}}
\newcommand{\sxc}{\operatorname{sxc}}
\newcommand{\sxd}{\operatorname{sxdeg}}
\newcommand{\cA}{\mathcal{A}}
\newcommand{\cone}{\operatorname{cone}}
\newcommand{\CP}{\operatorname{CP}}
\newcommand{\ux}{\mathbf{x}}
\newcommand{\SDR}{\operatorname{SDR}}
\begin{document}
	
\title{\huge Optimal size of linear matrix inequalities in semidefinite approaches to polynomial optimization}
\author{ Gennadiy~Averkov\footnote{Institute of Mathematical Optimization, Faculty of Mathematics, Otto-von-Guericke-Universit\"at Magdeburg, email: averkov@ovgu.de}}

\maketitle

\begin{abstract}
	The abbreviations LMI and SOS stand for `linear matrix inequality' and `sum of squares', respectively.
	The cone $\Sigma_{n,2d}$ of SOS polynomials in $n$ variables of degree at most $2d$ is known to have a semidefinite extended formulation with one LMI of size $\binom{n+d}{n}$. In other words, $\Sigma_{n,2d}$ is a linear image of a set described by one LMI of size $\binom{n+d}{n}$. We show that $\Sigma_{n,2d}$ has no semidefinite extended formulation with finitely many LMIs of size less than $\binom{n+d}{n}$. Thus, the standard extended formulation of $\Sigma_{n,2d}$ is optimal in terms of the size of the LMIs. As a direct consequence, it follows that the cone of $k \times k$ symmetric positive semidefinite matrices has no extended formulation with finitely many LMIs of size less than $k$. We also derive analogous results for further cones considered in polynomial optimization such as truncated quadratic modules, the cones of copositive and completely positive matrices and the cone of sums of non-negative circuit polynomials. 
\end{abstract}

\section{Introduction}

\subsection{Semidefinite extended formulations}

Consider the vector space $\cS^k$ of $k \times k$ symmetric matrices over $\R$ and the cone $\cS_+^k$ of positive semidefinite matrices in $\cS^k$. If $ A: \R^n \to \cS^k$ is an affine map, say 
\[
	A(x_1,\ldots,x_n):=A_0 + x_1 A_1 + \cdots + x_n A_n,
\]
with $A_0,\ldots,A_n \in \cS^k$, then the condition \[
	A(x_1,\ldots,x_n) \in \cS_+^k
\] is called a \emph{linear matrix inequality} (\emph{LMI}) of size $k$ on real-valued variables $x_1,\ldots,x_n$.  \emph{Semidefinite programming} (\emph{SDP}) is optimization of a linear function subject to finitely many LMIs \cite{Handbook:semidefinite:2000,Handbook:SDP:CONIC:POP:12}.  Equivalently, SDP can also be described as optimization of a linear function over the intersection of an affine subspace $H$ of $\cS^k$ with the cone $\cS_+^k$. Due to the stunning expressive power of LMIs, SDP has numerous applications across a wide range of subject areas \cite{Handbook:semidefinite:2000}. 

While SDP is known to be efficiently solvable -- with a desired accuracy  -- under mild assumptions, the size of the LMIs is definitely an important limitation on the way to practical solvability \cite{Mittelmann:2003}. In order to successfully use SDP solvers, it is thus important to keep the size of the respective LMIs under control when modeling an underlying problem. The aim of this article is to address this size issue from the theoretical viewpoint. We are interested in understanding the limitation on the expressive power of the SDP implied by prescribing a size bound on the underlying LMIs. More concretely, we discuss semidefinite relaxations of problems in polynomial optimization. 

Our aim is to study properties of the so-called semidefinite extended formulations of semialgebraic sets.
We will use the general conic-programming framework from Gouveia~et~al.~\cite{GPT:2013} that allows to deal with various types of conic extended formulations in a uniform fashion. If $K$ is a closed convex cone in a finite-dimensional $\R$-vector space and $S= \pi(K \cap H)$, where $H$ is an affine space and $\pi$ is a linear map, then we say that $S$ has a \emph{$K$-lift}. For 
\[
	K= (\cS_+^k)^m = \underbrace{\cS_+^k \times \cdots \times \cS_+^k}_{m},
\] 
a set $S$ having a $K$-lift is a linear image of a set that can be described by $m$ LMIs of size $k$. In this case, we also say that $S$ has a \emph{semidefinite extended formulation} with $m$ LMIs of size $k$. 

\begin{dfn}[Semidefinite extension complexity and semidefinite extension degree] Let $S$ be a subset of an $\R$-vector space.
We call the minimal $k$ such that $S$ has a $\cS_+^k$-lift the \emph{semidefinite extension complexity} of $S$ and denote this value by $\sxc(S)$. If $S$ has no $\cS_+^k$-lift independently of the choice of $k$, we define $\sxc(S):=\infty$. As a natural complement to $\sxc(S)$, we introduce the \emph{semidefinite extension degree} $\sxd(S)$ of $S$ to be the smallest $k$ such that $S$ has an $(\cS_+^k)^m$-lift for some finite $m$. If $S$ has no semidefinite extended formulation, we define $\sxd(S):=\infty$. 
\end{dfn}

Studying lower and upper bounds on $\sxc(S)$ is an active research area \cite{GPT:2013,MR3450076,MR3395545,MR3421630,MR3397065,MR3555385,MR3369076,MR3388236,GGS:2017,MR3652002,AKW:2018,MR3771398}. It is clear that $\sxd(S) \le \sxc(S)$. We believe that, along with $\sxc(S)$, the value $\sxd(S)$ is an important parameter for quantifying tractability of semidefinite approaches to optimization of linear functions over $S$. 

\subsection{Convex cones in polynomial optimization}

We briefly revise some basic concepts and facts from polynomial optimization, see also  \cite{Marshall:book:2008,Laurent:2009,Lasserre:book:2015}. 

In what follows, let $m,n$ and $k$ be positive integers and $d$ a non-negative integer. Let $\ux=(x_1,\ldots,x_n)$ and let $\R[\ux]$ be the ring of $n$-variate polynomials in variables $x_1,\ldots,x_n$ with coefficients in $\R$.  The subset
\[
\R[\ux]_d := \setcond{f \in \R[\ux]}{\deg f \le d}.
\]
of $\R[\ux]$ is a vector space of dimension $\binom{n+d}{n}$. A polynomial $f \in \R[\ux]$ is called \emph{sum of squares} (\emph{SOS}) if $f=f_1^2 + \cdots + f_r^2$ holds for finitely many polynomials $f_1,\ldots,f_r \in \R[\ux]$.

The following are the basic cones from real algebraic geometry and polynomial optimization: 
\begin{align*}
	\Sigma_{n,2d} & := \setcond{f \in \R[\ux]_{2d}}{f \ \text{is SOS}},
	\\ P_{n,2d} & := \setcond{f \in \R[\ux]_{2d}}{f \ge 0 \ \text{on} \ \R^n},
	\\ P_{n,2d}(X) &:= \setcond{f \in \R[\ux]_{2d}}{f \ge 0 \ \text{on} \ X} & & (X \subseteq \R^n).
\end{align*}

It is well-known \cite[\S2.1]{Lasserre:book:2015} that $\Sigma_{n,2d}$ has the semidefinite extended formulation
\begin{align}
	\label{sos:lift}
	\Sigma_{n,2d} & =\setcond{v_{n,d}^\top A v_{n,d}}{A \in \cS_+^k} & & \text{for} \ k=\binom{n+d}{n},
\end{align}
where $v_{n,d}$ is the vector
\begin{align}
	\label{vec:mon}
	v_{n,d} := (\ux^\alpha)_{|\alpha| \le d}
\end{align}
of all monomials of degree at most $d$ in $n$ variables and the notation $\ux^\alpha$ with $\alpha = (\alpha_1,\ldots,\alpha_n) \in \Z_+^n$ is used to denote the monomial
\[
\ux^\alpha := x_1^{\alpha_1} \cdots x_n^{\alpha_n}
\] of degree 
\[
|\alpha| := \alpha_1 + \cdots + \alpha_n.
\]
Equality \eqref{sos:lift} implies
\begin{align}
	\sxd(\Sigma_{n,2d}) \le \sxc(\Sigma_{n,2 d}) \le \binom{n+d}{n}.
	\label{Sigma:bounds}
\end{align}
The lifted representation \eqref{sos:lift} of $\Sigma_{n,2d}$ is a basic building block for the reduction of polynomial optimization problems to SDP problems. Due to the obvious inclusion $\Sigma_{n,2d} \subseteq P_{n,2d}$, lower bounds on the unconstrained polynomial optimization problem
\begin{align}
	\label{upop}
	& \inf_{x \in \R^n} f(x) & & (f \in \R[\ux]_{2d}),
\end{align}
can be derived from the \emph{SOS relaxation} of \eqref{upop}, which is the conic problem -- with respect to the cone $\Sigma_{n,2d}$ -- formulated as 
\begin{align}
	\label{usos}
	\max \setcond{\lambda \in \R}{f- \lambda \in \Sigma_{n,2d}}.
\end{align}
In view of \eqref{sos:lift},  the condition $f - \lambda \in \Sigma_{n,2d}$ in \eqref{usos} can be reformulated as the linear constraint $\lambda + v_{n,2d}^\top A v_{n,d} =f$ on the scalar decision variable $\lambda \in \R$ and the matrix decision variable $A \in \cS_+^k$ of size $k = \binom{n+d}{d}$. Reformulated like this, \eqref{usos} becomes a semidefinite optimization problem.

 For a general constrained polynomial optimization problem 
 \begin{align}
	 \label{cpop}
	 \inf \setcond{f(x)}{x \in \R^n, g_1(x) \ge 0,\ldots, g_k(x) \ge 0} & & (f, g_1,\ldots,g_k \in \R[\ux])
 \end{align}
 the approach is similar. Feasible solutions of \eqref{cpop} form the set
 \[
 X := \setcond{x \in \R^n}{g_1(x) \ge 0,\ldots,g_k(x) \ge 0}.
 \]
 Choosing $d$ with $2d \ge \deg f$, one can reformulate \eqref{cpop} as the conic problem with respect to the cone $P_{n,2d}(X)$: 
 \[ \inf \setcond{ \lambda \in \R}{f - \lambda \in P_{n,2d}(X)}.
 \]
 In constrained polynomial optimization, the principle of the SOS-based approaches is to find a cone $C$ contained in $P_{n,2d}(X)$ that approximates $P_{n,2d}(X)$ sufficiently well and has a semidefinite extended formulation. Real algebraic geometry suggests various natural choices of $C$ that are built upon $\Sigma_{n,2d}$. The so-called \emph{SOS hierarchies} for \eqref{cpop} involve cones of the form 
 \begin{align}
	 \label{C:truncated:module}
	 C= \Sigma_{n,2d_0} + g_1 \Sigma_{n, 2d_1}  + \cdots + g_k \Sigma_{n,2d_k}
 \end{align}
 with $d_0,\ldots,d_k \in \Z_+$ \cite[\S2.4.2 and \S2.7.1]{Lasserre:book:2015}. We call the cone $C$ given by \eqref{C:truncated:module} the \emph{truncated quadratic module} generated by $g_1,\ldots,g_s$ with the \emph{truncation degrees} $2d_0,\ldots,2d_k$. The standard approach is to first choose the value $d_0 \in \Z_+$ such that $2d _0$ is an upper bound on the degrees of the of the polynomials $f, g_1,\ldots,g_k$ and then to fix the largest possible values $d_1,\ldots,d_k \in \Z_+$ satisfying $2 d_i + \deg g_i \le 2 d_0$. The truncated modules with the above special choice of the truncation degrees generate the so-called \emph{SOS hierarchy}, while the choice of $d_0$ determines the \emph{level} of the hierarchy.
 
 \subsection{Overview of results} 
 
 We address the following basic questions:
 
 \begin{enumerate}[(Q1)]
	\item \label{first:question} How large is $\sxd(C)$ for closed convex cones $C$ satisfying $\Sigma_{n,2d} \subseteq C \subseteq P_{n,2d}(X)$? 
	\item \label{second:question} How large is $\sxd(C)$ for $C$ being a truncated quadratic module? 
 \end{enumerate}
 
 Our main theorem (Theorem~\ref{thm:tool}) suggests an approach to lower-bounding $\sxd(C)$ for the above cases. Using this approach, we can answer (Q\ref{first:question}) and (Q\ref{second:question}) in a variety of cases. Regarding (Q\ref{first:question}), it should be mentioned that recent breakthrough results of Scheiderer \cite{Scheiderer:2018} provide various choices of convex semi-algebraic sets $C$, for which $\sxd(C)$ is infinite. For example, $\sxd(P_{n,2d})$ is infinite if $n,d \ge 2$ and $(n,d) \ne (2,2)$ \cite[Corollary~4.25]{Scheiderer:2018}. Our quantitative studies are in a certain sense complementary, because our objective is to determine $\sxd(C)$ in those cases, for which this value is finite.  A recent contribution of Fawzi \cite{Fawzi:2018} can be interpreted as a first step in the study of quantitative aspects of (Q\ref{first:question}). Arguments of Fawzi allow to determine $\sxd(\cS_+^k)$ for $k \le 3$ and $\sxd(\Sigma_{n,2d})$ for $n=1$ and $d \le 2$. Regarding $\sxd(\Sigma_{n,2d})$ in the case $n=1$ and $d \le 2$, see also the exposition in \cite{ahmadi2017improving}. 
 
 In this paper, we determine $\sxd(\cS_+^k)$ and $\sxd(\Sigma_{n,2d})$ for all $k$, $n$ and $d$. We also determine the semidefinite extension degree of the truncated quadratic modules under a natural assumption. 
 
 Apart from SOS-based approaches, there has been a new approach to polynomial optimization based on the so-called SONC cone $C_{n,2d}$, considered in the work of Dressler et al.~\cite{dressler2016approach,MR3691721}. It has not been clear if this alternative approach has a semidefinite formulation. It turns out that this is indeed the case. Moreover, $C_{n,2d}$ even has a second-order cone extended formulation. This can be expressed as the equality $\sxd(C_{n,2d})=2$ in our notation. 
 
The paper is organized as follows. In Section~\ref{sect:results}, we formulate and discuss the results. Section~\ref{sect:background} provides background information, including the notation and two basic tools that we need for proving our main theorem (Theorem~\ref{thm:tool}). Section~\ref{sect:proof:main} contains the proof of the main theorem. Section~\ref{sect:proof:conseq:main} presents proofs of the consequences of the main theorem.  Section~\ref{sec:SONC} deals with the SONC cone.
 
\section{Results}

\label{sect:results}

\subsection{Main theorem and its consequences}

Lower bounds on the semidefinite extension degree of various specific convex cones that we discuss below will be obtained as a consequence of the following general result. 

\begin{thm}[Main theorem]
	\label{thm:tool}
	Let $X \subseteq \R^n$ be a set with non-empty interior. Let $C \subseteq P_{n,2d}(X)$ be a closed convex cone such that there exist finite subsets $S$ of $X$ of arbitrarily large cardinality with the following property:
	\begin{itemize}
		\item[$(\ast)$]For every $k$-element subset $T$ of $S$, some  polynomial $f$ in the cone $C$ is equal to zero on $T$ and is strictly positive on $S \setminus T$.
	\end{itemize}
	Then $\sxd(C) > k$. 
\end{thm}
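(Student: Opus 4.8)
The plan is to argue by contradiction: suppose $C$ has an $(\cS_+^\ell)^m$-lift for some finite $m$ and some $\ell \le k$, and derive a contradiction using the property $(\ast)$ together with a combinatorial/dimension-counting argument on exposed faces. The intuition is that the hypothesis $(\ast)$ says the cone $C$ has "many" faces in a strong sense — for arbitrarily large finite $S \subseteq X$ and every $k$-subset $T \subseteq S$, the set of points where some element of $C$ vanishes on exactly $T$ is nonempty — whereas a $\cS_+^\ell$-lift can only produce faces coming from faces of $\cS_+^\ell$, and faces of $\cS_+^\ell$ form a bounded-complexity family (they are indexed by subspaces of $\R^\ell$, so the vanishing patterns they can cut out have a rank/dimension constraint governed by $\ell$).

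Concretely, I would proceed as follows. First, fix a lift $C = \pi(K \cap H)$ with $K = (\cS_+^\ell)^m$. For a large finite set $S = \{s_1,\dots,s_N\} \subseteq X$ with property $(\ast)$, consider the evaluation map $\mathrm{ev}_S : C \to \R^N$, $f \mapsto (f(s_1),\dots,f(s_N))$; since $C \subseteq P_{n,2d}(X)$ and $S \subseteq X$, the image $\mathrm{ev}_S(C)$ lies in the nonnegative orthant $\R^N_{\ge 0}$. Property $(\ast)$ says that for every $k$-subset $T$ of indices, $\mathrm{ev}_S(C)$ meets the relative interior of the coordinate face $\{y \in \R^N_{\ge 0} : y_i = 0 \iff i \in T\}$. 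Equivalently, the cone $\mathrm{ev}_S(C)$ touches all coordinate faces of the orthant of codimension $k$. Composing with the lift, each such point comes from a point in $K \cap H$ lying in the relative interior of some proper face $F$ of $K$; the face $F$ is a product of faces of the $\cS_+^\ell$ factors, each determined by a subspace of $\R^\ell$, and the linear map $\pi$ sends $F$ into the corresponding coordinate face of the orthant. The key quantitative input is that a face of $\cS_+^\ell$ has the form $\cS_+^V := \{A \succeq 0 : \im A \subseteq V\}$ for a subspace $V \subseteq \R^\ell$, and the lattice of such faces has "rank" at most $\ell$; I would make this precise via the result that the number of distinct vanishing patterns, or the dimension of the relevant quotient, forces $\ell \ge k+1$ once $N$ is large enough. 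The cleanest route is probably: by choosing $N$ large and applying a pigeonhole/Ramsey-type argument to the finitely many "types" of faces $F$ of $K$ (there are finitely many combinatorial types for fixed $\ell, m$, but the subspaces vary continuously — so one needs to bound the number of sign patterns a quadratic map into $\cS^\ell_+$ followed by evaluation can realize), one shows that a $\cS_+^\ell$-lift with $\ell \le k$ cannot simultaneously realize all $\binom{N}{k}$ codimension-$k$ coordinate faces of the orthant, contradicting $(\ast)$ for $N$ large.

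**Main obstacle.** The hard part will be the precise quantitative step linking the size $\ell$ of the LMI blocks to the combinatorial richness forced by $(\ast)$ — that is, showing that a composition "polynomial $\hookrightarrow K \cap H \xrightarrow{\pi}$ evaluations" with blocks of size $\le k$ simply cannot exhibit all codimension-$k$ vanishing patterns on a sufficiently large $S$. This presumably rests on the two "basic tools" promised in Section~\ref{sect:background}; I would expect one of them to be a statement about faces of $\cS_+^\ell$ (every proper face is $\cS_+^V$ for some proper subspace $V$, and chains of faces have length $\le \ell$), and the other to be some form of Carathéodory- or rank-type bound controlling how evaluation functionals interact with such faces. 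The subtlety is that the subspaces $V$ defining the relevant faces depend on the point of $S$ at which we vanish, so a naive "finitely many faces" count fails; one must instead extract the bound from the algebraic structure (e.g. that $\dim$ of a chain of nontrivial faces of $K$ is at most $m\ell$, but more sharply that the relevant "vanishing rank" per block is controlled by $\ell$), and I anticipate that reconciling the product structure $(\cS_+^\ell)^m$ — where $m$ is allowed to be arbitrarily large — with a bound depending only on $k$ (not on $m$) is the genuinely delicate point of the argument.
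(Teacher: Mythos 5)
Your high-level strategy is the right one --- argue by contradiction from a lift, exploit the zero pattern forced by $(\ast)$, use that orthogonality/faces of $\cS_+^\ell$ are governed by subspaces of $\R^\ell$, and finish with a pigeonhole/Ramsey argument --- and it matches the paper's route in spirit. But the step you yourself flag as the ``main obstacle'' is exactly the content of the proof, and you have not supplied it. The paper resolves it as follows. Via the factorization criterion of Gouveia et al.\ (Theorem~\ref{gpt:thm}), an $(\cS_+^k)^m$-lift yields $A_T=(A_{T,1},\dots,A_{T,m})$ for each $T\in\binom{S}{k}$ and $B_s=(B_{s,1},\dots,B_{s,m})$ for each $s\in S$ with $f_T(s)=\sprod{A_T}{B_s}$, hence $\sprod{A_T}{B_s}=0$ iff $s\in T$. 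Since $\sprod{A}{B}=0$ for positive semidefinite $A,B$ iff $\im(A)$ is orthogonal to $\im(B)$, the relevant data attached to $T$ is the tuple of subspaces $U_{T,i}:=\sum_{t\in T}\im(B_{t,i})\subseteq\R^k$. Your worry that ``the subspaces vary continuously, so a finitely-many-faces count fails'' is precisely the difficulty, and the fix is to color $T$ not by the subspaces but by their \emph{dimensions} $(\dim U_{T,1},\dots,\dim U_{T,m})\in\{0,\dots,k\}^m$ --- only $(k+1)^m$ colors --- apply Ramsey's theorem for $k$-uniform hypergraphs to extract a $(k+1)$-set $W$ with $\binom{W}{k}$ monochromatic, and then prove separately that equal dimensions on $\binom{W}{k}$ force the subspaces themselves to coincide (a short spanning-set argument: $U_{T_1,i}+U_{T_2,i}$ is spanned by at most $k$ of the images $\im(B_{t,i})$ with $t\in T_1\cup T_2\subseteq W$, hence is contained in some $U_{T'',i}$ with $T''\in\binom{W}{k}$, so any strict inclusion would make a dimension jump). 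Writing $W=T\cup\{s\}$ then gives $\im(A_{T,i})$ orthogonal to $U_i\supseteq\im(B_{s,i})$, i.e.\ $\sprod{A_T}{B_s}=0$ with $s\notin T$, a contradiction. Without this mechanism your sketch does not close.

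One further correction: your anticipated difficulty of obtaining ``a bound depending only on $k$ (not on $m$)'' is a misreading of the quantifiers. One rules out an $(\cS_+^k)^m$-lift separately for each fixed $m$, using a set $S$ of cardinality $R_k(k+1;(k+1)^m)$, which does depend on $m$; this is permitted because $(\ast)$ supplies sets $S$ of arbitrarily large cardinality, and $\sxd(C)\le k$ would require a lift for \emph{some} finite $m$. So no uniform-in-$m$ bound is needed, and the product structure is handled simply by working coordinatewise in $[m]$ and paying for it in the number of Ramsey colors.
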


Specializing Theorem~\ref{thm:tool} to more concrete situations we obtain a number of corollaries. Their detailed proofs are given in Section~\ref{sect:proof:conseq:main}. As mentioned in the introduction, SOS-based approaches to polynomial optimization use conic formulations based on cones $C$ that lie between $\Sigma_{n,2d}$ and $P_{n,2d}(X)$. In view of Theorem~\ref{thm:tool}, the semidefinite extension degree of such cones $C$ is necessarily `large'. In the case $\Sigma_{n,2d} \subseteq C \subseteq P_{n,2d}(X)$, choosing $f$ in $(\ast)$ to be appropriate polynomials from $\Sigma_{n,2d}$, we arrive at
\begin{cor}
	\label{cor:between}
	Let $X \subseteq \R^n$ be a set with non-empty interior and $C$ be a closed convex cone satisfying $\Sigma_{n,2d} \subseteq C \subseteq P_{n,2d}(X)$. Then 
	\[
	\sxd(C) \ge \binom{n+d}{n}.
	\] 
\end{cor}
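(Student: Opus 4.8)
The plan is to apply the main theorem (Theorem~\ref{thm:tool}) with $k := \binom{n+d}{n} - 1$, for the given set $X$ and cone $C$. Since $\Sigma_{n,2d} \subseteq C$, it suffices to verify property $(\ast)$ using test polynomials $f$ taken from $\Sigma_{n,2d}$; in fact single squares $f = p^2$ with $p \in \R[\ux]_d$ will do. Write $N := \binom{n+d}{n} = \dim \R[\ux]_d$, and for $s \in \R^n$ let $v_{n,d}(s)$ denote the vector \eqref{vec:mon} evaluated at $x = s$, so that $p(s) = \langle c, v_{n,d}(s)\rangle$ when $c$ is the coefficient vector of $p \in \R[\ux]_d$.

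The core of the argument is a general-position statement: for every integer $M \ge N$ there is an $M$-element set $S \subseteq X$ such that the vectors $v_{n,d}(s)$, with $s$ ranging over any $N$-element subset of $S$, are linearly independent in $\R^N$ (equivalently, every $N$-element subset of $S$ imposes independent conditions on $\R[\ux]_d$). Granting this, fix such an $S$ and let $T \subseteq S$ with $|T| = k = N-1$. Enlarging $T$ by one more point of $S$ shows that the vectors $v_{n,d}(t)$, $t \in T$, are linearly independent, so the evaluation map $\R[\ux]_d \to \R^{N-1}$, $p \mapsto (p(t))_{t \in T}$, is onto and has one-dimensional kernel; pick a non-zero $p_T \in \R[\ux]_d$ spanning it. For $s \in S \setminus T$ the set $T \cup \{s\}$ has $N$ elements, so the only polynomial in $\R[\ux]_d$ vanishing on all of $T \cup \{s\}$ is $0$; as $p_T \ne 0$ vanishes on $T$, necessarily $p_T(s) \ne 0$. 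Hence $f := p_T^2$ belongs to $\Sigma_{n,2d} \subseteq C$, is zero on $T$, and is strictly positive on $S \setminus T$, which is exactly $(\ast)$. Since $M$ can be taken arbitrarily large, Theorem~\ref{thm:tool} yields $\sxd(C) > N - 1$, i.e.\ $\sxd(C) \ge \binom{n+d}{n}$.

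To prove the general-position statement I would use a greedy construction. As $X$ has non-empty interior, it contains a non-empty open ball $B$, on which the monomials listed in $v_{n,d}$ are linearly independent as functions. Pick $p_1, \dots, p_M \in B$ successively. For $1 \le i \le N-1$, the span of $v_{n,d}(p_1), \dots, v_{n,d}(p_{i-1})$ is a proper subspace of $\R^N$, hence cannot contain the whole image $\{v_{n,d}(x) : x \in B\}$ (that would force a non-trivial linear dependence among the monomials on $B$); so we may choose $p_i \in B$ with $v_{n,d}(p_i)$ outside that span, keeping $v_{n,d}(p_1), \dots, v_{n,d}(p_i)$ independent. For $j \ge N$, suppose $p_1, \dots, p_{j-1}$ have been chosen so that all their $N$-element subsets — hence also all their $(N-1)$-element subsets — give independent vectors. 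For each $(N-1)$-element subset $T \subseteq \{p_1, \dots, p_{j-1}\}$, consider the map $\delta_T$ sending $x \in \R^n$ to the determinant of the $N \times N$ matrix whose first $N-1$ rows are the vectors $v_{n,d}(t)$, $t \in T$, and whose last row is $v_{n,d}(x)$. Expanding along the last row, $\delta_T$ is a polynomial of degree $\le d$ in $x$ whose coefficients are, up to sign, the maximal minors of the rows $v_{n,d}(t)$, $t \in T$; these are not all zero since those rows are independent, so $\delta_T$ is a non-zero polynomial and its zero set has empty interior. Choosing $p_j \in B$ outside the union of the finitely many zero sets of the $\delta_T$ ensures that every $N$-element subset of $\{p_1, \dots, p_j\}$ containing $p_j$ gives independent vectors, while those not containing $p_j$ do so by the induction hypothesis. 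After $M$ steps $S := \{p_1, \dots, p_M\}$ has the required property. (Equivalently, the tuples in $B^M$ failing the condition form a proper algebraic subvariety, hence a nowhere dense set.)

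The only step with any real content is this general-position statement, and even there the argument is entirely standard; once it is available, Corollary~\ref{cor:between} is an immediate specialization of Theorem~\ref{thm:tool}. The point worth keeping in mind is that for $n \ge 2$ the points of $S$ must be genuinely spread out in $\R^n$ — e.g.\ collinear points would not work, since any $d+2$ collinear points satisfy a linear relation on $\R[\ux]_d$ — which is precisely why an interior point of $X$, and a ball around it, is exploited.
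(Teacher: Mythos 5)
Your proposal is correct and follows essentially the same route as the paper: apply Theorem~\ref{thm:tool} with $k=\binom{n+d}{n}-1$ to squares $p_T^2$ of degree-$d$ polynomials vanishing exactly on $T$ within a point set $S\subseteq X$ whose evaluation vectors $v_{n,d}(s)$ are in general linear position. The only cosmetic difference is that you build $S$ greedily by avoiding zero sets of the determinant polynomials, whereas the paper observes directly that the good tuples are dense in $(\R^n)^N$ — a reformulation you yourself note at the end.
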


Since $\Sigma_{n,2d}$  has a semidefinite extended formulation with one LMI of size $\binom{n+d}{d}$, the latter corollary allows to determine the exact values of the semidefinite extension degree and the semidefinite extension complexity for the cone $\Sigma_{n,2d}$:

\begin{cor}
	\label{cor:sxd:sos}
	\(\sxd(\Sigma_{n,2d})=\sxc(\Sigma_{n,2d})=\binom{n+d}{n}\).
\end{cor}

Corollary~\ref{cor:sxd:sos} allows to determine the computational costs of solving the SOS-relaxation \eqref{usos} of an unconstrained polynomial optimization problem by means of SDP. 

Turning to constrained polynomial optimization, we determine the semidefinite extension degree of the truncated quadratic modules, which allows us to estimate the costs of solving a given level of the SOS hierarchy. The following corollary deals with the natural case when the set $X$ of feasible solutions of the underlying optimization problem has non-empty interior. 

\begin{cor}
	\label{cor:sxd:trunc:quad:module}
	Let $g_1,\ldots,g_k \in \R[\ux] \setminus \{0\}$ be such that the set
	\[
		X := \setcond{x\in \R^n}{g_1(x) \ge 0,\ldots,g_k(x) \ge 0}
	\]
	has non-empty interior. Then, for the truncated quadratic module
	\[
		C= \Sigma_{n,2d_0} + g_1 \Sigma_{n,2 d_1} + \cdots + g_k \Sigma_{n,2 d_k}
	\]
	with $d_0,\ldots,d_k \in \Z_+$, one has \[\sxd(C) = \binom{n+d}{n},\] where \[d := \max \{d_0,\ldots,d_k\}.\]
\end{cor}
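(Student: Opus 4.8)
The plan is to prove the two bounds $\sxd(C)\le\binom{n+d}{n}$ and $\sxd(C)\ge\binom{n+d}{n}$ separately. For the upper bound I would set $g_0:=1$, so that $C=\sum_{i=0}^{k}g_i\,\Sigma_{n,2d_i}$. By \eqref{sos:lift} each $\Sigma_{n,2d_i}$ has an $\cS_+^{r_i}$-lift with $r_i:=\binom{n+d_i}{n}\le r:=\binom{n+d}{n}$; multiplication by the fixed polynomial $g_i$ is a linear map, so $g_i\,\Sigma_{n,2d_i}$ still has an $\cS_+^{r_i}$-lift, and embedding $\cS_+^{r_i}$ into $\cS_+^{r}$ as the matrices supported on a fixed principal $r_i\times r_i$ block turns this into an $\cS_+^{r}$-lift. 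Since a Minkowski sum of sets with $K_0,\ldots,K_k$-lifts has a $(K_0\times\cdots\times K_k)$-lift, $C$ has an $(\cS_+^{r})^{k+1}$-lift and therefore $\sxd(C)\le r=\binom{n+d}{n}$.

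For the lower bound I would apply Theorem~\ref{thm:tool} to $C$ itself, in the spirit of Corollary~\ref{cor:between} but with an extra twist compensating for the fact that $\Sigma_{n,2d}$ need not be contained in $C$. Fix an index $j\in\{0,1,\ldots,k\}$ with $d_j=d$ (again with $g_0:=1$), so that $g_j\,\Sigma_{n,2d}\subseteq C$. Because $g_j\not\equiv 0$, the set $U:=\operatorname{int}(X)\setminus\{g_j=0\}$ is a non-empty open set, and $g_j>0$ on $U$ since $g_j\ge 0$ on $X\supseteq U$. For every integer $m$ one can pick an $m$-element subset $S\subseteq U$ that is in general position with respect to $\R[\ux]_d$, in the sense that every $\binom{n+d}{n}$-element subset of $S$ imposes linearly independent evaluation conditions on $\R[\ux]_d$; a generic choice of $S$ in the non-empty open set $U$ has this property, so such $S$ of arbitrarily large cardinality exist.

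Given such an $S$ and any $T\subseteq S$ with $|T|=\binom{n+d}{n}-1$, the subspace of $\R[\ux]_d$ consisting of polynomials vanishing on $T$ is one-dimensional; let $h_T$ span it. General position forces $h_T$ to be nonzero at every point of $S\setminus T$, so $f_T:=g_j h_T^2\in g_j\,\Sigma_{n,2d}\subseteq C$ vanishes on $T$ and is strictly positive on $S\setminus T$ (here we use $g_j>0$ on $S$). Hence $C$ satisfies condition $(\ast)$ of Theorem~\ref{thm:tool} with the value $\binom{n+d}{n}-1$ in place of $k$. Choosing $d'\in\Z_+$ with $2d'\ge\deg g_i+2d_i$ for every $i$, every element of $C$ is a polynomial of degree at most $2d'$ that is non-negative on $X$, so $C\subseteq P_{n,2d'}(X)$; moreover $C$ is a closed convex cone (one checks that the truncated quadratic module is closed when $\operatorname{int}(X)\ne\emptyset$), and $X$ has non-empty interior. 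Theorem~\ref{thm:tool} then yields $\sxd(C)>\binom{n+d}{n}-1$, i.e.\ $\sxd(C)\ge\binom{n+d}{n}$, which together with the upper bound proves the corollary.

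I expect the main obstacle to be arranging the lower bound so that the test polynomials genuinely lie in $C$: unlike in Corollary~\ref{cor:between}, the pure square $h_T^2$ need not belong to $C$ when $d>d_0$, and the remedy of using $g_j h_T^2$ with $d_j=d$ forces $g_j$ to be strictly positive on the whole test set $S$. The observation that makes this harmless is that $\{g_j>0\}$ meets $\operatorname{int}(X)$ in a non-empty open set, so the general-position set $S$ can be placed there. Two further points that need to be pinned down, though they are routine, are the existence of arbitrarily large point sets in general position inside a prescribed non-empty open set, and the closedness of the truncated quadratic module $C$ under the standing assumption $\operatorname{int}(X)\ne\emptyset$.
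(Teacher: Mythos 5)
Your proposal is correct and follows essentially the same route as the paper: the upper bound via Lemma~\ref{lem:sxd:sum} applied to the decomposition $C=\sum_{i=0}^k g_i\Sigma_{n,2d_i}$, and the lower bound by fixing $j$ with $d_j=d$, placing a general-position set $S$ where $g_j>0$ inside $X$, and feeding the test polynomials $g_jh_T^2\in g_j\Sigma_{n,2d}\subseteq C$ into Theorem~\ref{thm:tool}. The two technical points you flag (arbitrarily large general-position sets inside a prescribed open set, which is Lemma~\ref{lem:generic}(a), and the closedness of the truncated quadratic module when $X$ has interior) are handled, respectively explicitly and implicitly, in the paper as well.
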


The cone $C$ in Corollary~\ref{cor:sxd:trunc:quad:module} has a straightforward semidefinite extended formulation with $k+1$ LMIs of sizes $\binom{n+d_0}{n}, \ldots, \binom{n+d_k}{n}$. The number $\binom{n+d}{n}$ is the maximum of these $k+1$ sizes. By Corollary~\ref{cor:sxd:trunc:quad:module}, the straightforward extended formulation is optimal in terms of the size of the LMIs when $X$ has non-empty interior. 

The case $d=1$ of Corollary~\ref{cor:sxd:sos} yields the semidefinite extension degree of $\cS_+^k$: 

\begin{cor}
	\label{cor:sxd:sdp}
	\(\sxd(\cS_+^k) = k\).
\end{cor}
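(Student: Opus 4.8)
The plan is to reduce the statement to the case $d=1$ of Corollary~\ref{cor:sxd:sos} by exhibiting a linear isomorphism between the cone $\cS_+^k$ and the cone $\Sigma_{k-1,2}$ of sums of squares of quadratic polynomials in $n:=k-1$ variables.

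First I would record that the semidefinite extension degree is invariant under invertible linear maps of the ambient space: if $\Phi$ is a linear isomorphism of finite-dimensional $\R$-vector spaces and $S = \pi(K\cap H)$ is an $(\cS_+^j)^m$-lift of a set $S$ in the domain, then $\Phi(S) = (\Phi\circ\pi)(K\cap H)$ is an $(\cS_+^j)^m$-lift of $\Phi(S)$; applying this to $\Phi$ and to $\Phi^{-1}$ gives $\sxd(\Phi(S)) = \sxd(S)$.

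Next I would set $n := k-1$ and consider the linear map $\Phi\colon \cS^k \to \R[\ux]_2$ defined by $\Phi(A) := v_{n,1}^\top A\, v_{n,1}$, where $v_{n,1} = (1,x_1,\ldots,x_n)$ as in \eqref{vec:mon}. By the Gram-matrix description of sums of squares --- precisely the identity \eqref{sos:lift} with $d=1$ --- one has $\Phi(\cS_+^k) = \Sigma_{n,2}$. The elementary point that needs checking is that $\Phi$ is actually a bijection: the products $v_i v_j$ with $0 \le i \le j \le n$ that occur when one expands $v_{n,1}^\top A\, v_{n,1}$ are pairwise distinct monomials (the constant $1$, the linear monomials $x_i$, and the quadratic monomials $x_i x_j$), so $\Phi$ is injective, and $\dim \cS^k = \binom{k+1}{2} = \binom{n+2}{2} = 1 + n + \binom{n+1}{2} = \dim \R[\ux]_2$, so $\Phi$ is surjective. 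Hence $\Phi$ restricts to a linear isomorphism of cones $\cS_+^k \cong \Sigma_{n,2}$.

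Combining the two steps with Corollary~\ref{cor:sxd:sos} in the case $d=1$ then yields
\[
\sxd(\cS_+^k) \;=\; \sxd(\Sigma_{k-1,\,2}) \;=\; \binom{(k-1)+1}{\,k-1\,} \;=\; \binom{k}{\,k-1\,} \;=\; k.
\]
I do not expect a real obstacle here; the statement is essentially a specialization of Corollary~\ref{cor:sxd:sos}, and the only part requiring (routine) care is verifying that $\Phi$ is a genuine bijection rather than merely a surjection, which comes down to the monomial-distinctness observation and the dimension count above.
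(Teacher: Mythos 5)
Your proposal is correct and takes essentially the same route as the paper: the paper's proof also identifies $\cS_+^k$ with $\Sigma_{k-1,2}$ via the linear bijection $A \mapsto (x_1,\ldots,x_{k-1},1)^\top A\,(x_1,\ldots,x_{k-1},1)$ (identical to your $\Phi$ up to the ordering of the monomial vector) and then invokes Corollary~\ref{cor:sxd:sos} with $n=k-1$, $d=1$. Your explicit verification of bijectivity via monomial distinctness and the dimension count is a detail the paper labels ``straightforward to see,'' so nothing is missing on either side.
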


Corollary~\ref{cor:sxd:sdp} implies that the expressive power of the semidefinite optimization grows strictly with the growth of the size $k$ of the underlying LMIs. In other words, the family of all convex semialgebraic sets that have a semidefinite extended formulation (we call such sets \emph{semidefinitely representable}) can be decomposed into the hierarchy of the families 
\[
	\SDR(k):=\setcond{S \subseteq \R^n}{n \in \N, \ \sxd(S) \le k}
\] 
with each level of the hierarchy being strictly larger than the previous one. The lowest level $\SDR(1)$ of the hierarchy is just the family of all polyhedra. The family $\SDR(1)$ corresponds to linear optimization. The next level $\SDR(2)$ corresponds to the second-order cone programming,  which is an important generalization of linear programming. The family $\SDR(2)$ can be characterized using the \emph{second-order cone}
\[
	L_m:=\setcond{(x_1,\ldots,x_m) \in \R^m}{x_m \ge \sqrt{x_1^2 + \cdots +  x_{m-1}^2}}
\]
(for $m=1$, we define $L_m:=\R_+$).

\begin{prop}[Folklore; see the discussion in \cite{Fawzi:2018}]
	\label{prop:soc}
	For $S \subseteq \R^n$, the following conditions are equivalent
	\begin{enumerate}[(i)]
		\item $\sxd(S) \le 2$.
		\item $S$ has an $(L_3)^m$-lift for some $m \in \N$.
		\item $S$ has an $L_m$-lift for some $m \in \N$.  
	\end{enumerate}
\end{prop}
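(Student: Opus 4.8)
The plan is to prove the chain of implications (ii) $\Rightarrow$ (iii) $\Rightarrow$ (i) $\Rightarrow$ (ii), using the fact that $L_3$ and $\cS_+^2$ are essentially the same cone. The key algebraic observation is that $\cS_+^2$ is linearly isomorphic to $L_3$: a symmetric $2 \times 2$ matrix $\begin{pmatrix} a & b \\ b & c \end{pmatrix}$ is positive semidefinite if and only if $a + c \ge \sqrt{(a-c)^2 + 4 b^2}$, which after the linear change of coordinates $(x_1,x_2,x_3) = (a-c, 2b, a+c)$ is exactly the condition $(x_1,x_2,x_3) \in L_3$. Consequently an $(\cS_+^2)^m$-lift is the same thing as an $(L_3)^m$-lift, so condition (i) with $\sxd(S) \le 2$, which a priori permits an $(\cS_+^2)^m$-lift, is equivalent to (ii).

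For (ii) $\Rightarrow$ (iii): given an $(L_3)^m$-lift, I would assemble the $m$ copies of $L_3$ into a single second-order cone of larger dimension. The standard trick is that a Cartesian product $L_{3} \times \cdots \times L_{3}$ ($m$ factors) is itself a linear section of a single cone $L_N$ for a suitable $N$ — more precisely one can repeatedly use the identity that $\{(u,v,s,t) : s \ge \|u\|,\ t \ge \|v\|\}$ is the projection of $\{(u,v,s,t,w) : w \ge \sqrt{s^2+t^2},\ \dots\}$ intersected with affine constraints, so that a product of second-order cones sits as an affine slice of one bigger second-order cone. Composing the given lift with this embedding yields an $L_N$-lift of $S$, giving (iii). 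The implication (iii) $\Rightarrow$ (i) is immediate from the isomorphism above in the other direction: $L_m$ for any $m$ is a linear image (indeed a linear section, hence also an image under projection) of $\cS_+^2$ — for $m \le 2$ this is trivial, and for $m \ge 3$ one writes $L_m$ as an affine section of a product of copies of $L_3 \cong \cS_+^2$ by the same product-reduction identity — so an $L_m$-lift of $S$ composes to an $(\cS_+^2)^{m'}$-lift, whence $\sxd(S) \le 2$.

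The one implication that needs genuine (though still standard) care is (i) $\Rightarrow$ (ii). By definition $\sxd(S) \le 2$ means $S$ has an $(\cS_+^{k})^m$-lift for some $m$ with $k \le 2$; the case $k = 1$ is a polyhedron, which is trivially an $(L_3)^{m'}$-lift (a single linear inequality $a \ge 0$ is the slice $x_1 = x_2 = 0$, $x_3 = a$ of $L_3$), and the case $k = 2$ is handled by the isomorphism $\cS_+^2 \cong L_3$ noted above, so a mixed product of $\cS_+^1$'s and $\cS_+^2$'s becomes a product of $L_3$'s after padding the $\cS_+^1$ factors up to $L_3$. I expect the main obstacle to be purely bookkeeping: carefully writing down the affine-section realization of a product of second-order cones inside one $L_N$, and checking that composing lifts (a linear image of a linear section is again of that form) stays within the claimed class. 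None of this is deep — it is the folklore content referred to in the statement — so the proof will mostly consist of recording these elementary reductions and the $2 \times 2$ PSD computation cleanly.
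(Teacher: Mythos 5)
The paper never actually proves Proposition~\ref{prop:soc}: it is stated as folklore with a pointer to the discussion in Fawzi's paper, so there is no in-paper argument to compare yours against. Much of your outline is the correct standard argument: the linear isomorphism $\cS_+^2 \cong L_3$ via $(x_1,x_2,x_3)=(a-c,2b,a+c)$ is right, padding $\cS_+^1$-factors into $\cS_+^2$ (or $L_3$) is routine, lifts compose, and the tower decomposition of $L_m$ into $m-2$ copies of $L_3$ by introducing the partial norms $t_1 \ge \sqrt{x_1^2+x_2^2}$, $t_2 \ge \sqrt{t_1^2+x_3^2}, \ldots$ correctly gives (iii) $\Rightarrow$ (ii) $\Rightarrow$ (i).

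The genuine gap is in (ii) $\Rightarrow$ (iii). The claim that a Cartesian product of second-order cones sits as an affine slice (or a projection of an affine slice) of a single larger second-order cone is false, and the identity you gesture at does not deliver it: a constraint $w \ge \sqrt{s^2+t^2}$ plus affine conditions cannot encode the conjunction $s \ge \sqrt{u_1^2+u_2^2}$ \emph{and} $t \ge \sqrt{v_1^2+v_2^2}$. Concretely, every proper face of $L_N$ is $\{0\}$ or an extreme ray, hence of dimension at most $1$; this property passes to affine slices (every face of $L_N\cap H$ is $F\cap H$ for a face $F$ of $L_N$) and then to linear images (a proper face $G$ of $\pi(C)$ satisfies $G=\pi(F)$ for a proper face $F$ of $C$ with $\dim G \le \dim F$). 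So every proper face of any set with an $L_m$-lift has dimension at most $1$. But $L_3\times L_3$ has the proper face $L_3\times\{0\}$ of dimension $3$, and even $\R_+^3$ has $2$-dimensional proper faces; both sets satisfy (i) and (ii) yet have no $L_m$-lift for any single $m$. Thus the step is not merely unproven in your write-up -- the implication (ii) $\Rightarrow$ (iii) fails for the literal reading of (iii) with one Lorentz cone. The folklore equivalence (and the one in Fawzi's discussion) takes the third condition to be an $L_{m_1}\times\cdots\times L_{m_r}$-lift, i.e.\ representability by finitely many second-order cone constraints; with that reading your tower argument handles one direction and the other is immediate, and the proposition goes through. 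You should adopt that reading and delete the single-cone reduction.
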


Our results cover the following recent results as a special case. Aiming to demonstrate the discrepancy between the expressive power of second-order cone programming and semidefinite programming,  Fawzi proved
\begin{thm}[Fawzi \cite{Fawzi:2018}]
	\label{thm:fawzi}
	$\sxd(\cS_+^3) = 3$.
\end{thm}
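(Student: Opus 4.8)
The plan is to obtain this as the special case $k=3$ of Corollary~\ref{cor:sxd:sdp}, but let me describe the mechanism so the reduction is transparent. The upper bound is immediate: $\cS_+^3$ trivially admits a $\cS_+^3$-lift (take the affine space to be all of $\cS^3$ and $\pi$ the identity), so $\sxd(\cS_+^3)\le 3$. All the content is in the matching lower bound $\sxd(\cS_+^3)\ge 3$, and I would derive it from the Main Theorem (Theorem~\ref{thm:tool}) after identifying $\cS_+^3$ with a cone of sums of squares.

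First I would identify $\cS_+^3$ with $\Sigma_{2,2}$. The linear map $\cS^3\to\R[\ux]_2$ sending a symmetric matrix $A$ to the quadratic polynomial $v_{2,1}^\top A\, v_{2,1}$, where $v_{2,1}=(1,x_1,x_2)$ as in \eqref{vec:mon}, is a bijection: both spaces have dimension $6$, and the six pairwise products of the entries of $v_{2,1}$ are precisely the monomials $1,x_1,x_2,x_1^2,x_1x_2,x_2^2$, which are linearly independent. By \eqref{sos:lift} this bijection carries $\cS_+^3$ onto $\Sigma_{2,2}$. Since a routine check shows $\sxd$ is unchanged by a linear isomorphism of the ambient vector space, $\sxd(\cS_+^3)=\sxd(\Sigma_{2,2})$. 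Now $\Sigma_{2,2}$ lies in the chain $\Sigma_{2,2}\subseteq\Sigma_{2,2}\subseteq P_{2,2}(\R^2)$, so Corollary~\ref{cor:between} applies with $X=\R^2$ and gives $\sxd(\Sigma_{2,2})\ge\binom{2+1}{2}=3$, which combined with the upper bound finishes the proof.

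It is worth recording how the application of Theorem~\ref{thm:tool} underlying Corollary~\ref{cor:between} specializes here, since this is the one place where something is actually constructed. I would run Theorem~\ref{thm:tool} with $X=\R^2$, with $C=\Sigma_{2,2}$, and with the theorem's threshold parameter equal to $2$; as the witnessing sets $S$ I would take any finite collection of points on the parabola $\setcond{(t,t^2)}{t\in\R}$ — these can be chosen of arbitrarily large cardinality and no three of them are collinear. For a two-element subset $T=\{p,q\}$ of such an $S$, let $\ell$ be a nonzero affine form vanishing on the line through $p$ and $q$; then $f:=\ell^2\in\Sigma_{2,2}$ vanishes on $T$ and is strictly positive on $S\setminus T$, because no further point of $S$ lies on that line. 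This is exactly property $(\ast)$ with threshold $2$, so Theorem~\ref{thm:tool} yields $\sxd(\Sigma_{2,2})>2$, i.e. $\sxd(\Sigma_{2,2})\ge 3$.

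The only genuinely new thing one has to do here is choose the witnessing family $S$: after observing that, for a nonnegative quadratic in two variables, ``vanishing on a pair $T$ and positive on the rest of $S$'' reduces to ``the line through $T$ misses $S\setminus T$'', any family of points in \emph{general position} of unbounded size does the job, so there is no real obstacle. The substantive work — proving Theorem~\ref{thm:tool} itself — has already been done and is available for free in this special case.
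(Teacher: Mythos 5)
Your proposal is correct and follows essentially the same route as the paper: Theorem~\ref{thm:fawzi} is obtained there as the case $k=3$ of Corollary~\ref{cor:sxd:sdp}, whose proof identifies $\cS_+^3$ with $\Sigma_{2,2}$ via the same linear bijection $A\mapsto v_{2,1}^\top A\,v_{2,1}$ (written as $A\mapsto q_A(x_1,x_2,1)$) and then invokes Corollary~\ref{cor:sxd:sos}, i.e.\ Corollary~\ref{cor:between}. Even your explicit witnessing family on the parabola is the $n=2$ instance of the paper's remark that for $d=1$ one may take points on a moment curve and use the Vandermonde determinant to get general linear position.
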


In \cite{Fawzi:2018}, Fawzi also explains how to determine $\sxd(\Sigma_{1,4})$. Note that the cone $\Sigma_{1,4}$ is not discussed in the arxiv version (arxiv:1610.04901) of Fawzi's paper \cite{Fawzi:2018}. Independently, Ahmadi et al. \cite[Theorem~5]{ahmadi2017improving} refer to the arxiv version of \cite{Fawzi:2018} and provide a short argument that allows to determine $\sxd(\Sigma_{1,4})$ by reusing Fawzi's proof of Theorem~\ref{thm:fawzi}.

\begin{thm}[Ahmadi et al. {\cite[Thm.~5]{ahmadi2017improving}}, Fawzi {\cite[Sec.~4]{Fawzi:2018}}]
	\label{thm:ahmadi:et:al}
	$\sxd(\Sigma_{1,4}) = 3$.
\end{thm}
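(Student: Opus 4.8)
The plan is to derive both bounds from results already available. For the upper bound $\sxd(\Sigma_{1,4}) \le 3$, one uses the standard semidefinite extended formulation \eqref{sos:lift}: with $n=1$ and $d=2$ we have $k = \binom{1+2}{1} = 3$, so $\Sigma_{1,4}$ has a single LMI of size $3$, giving $\sxc(\Sigma_{1,4}) \le 3$ and hence $\sxd(\Sigma_{1,4}) \le 3$. For the lower bound $\sxd(\Sigma_{1,4}) \ge 3$, the natural route is Corollary~\ref{cor:between} (or equivalently Corollary~\ref{cor:sxd:sos}) specialized to $n=1$, $d=2$, $X = \R$: since $\Sigma_{1,4} \subseteq C \subseteq P_{1,4}(\R) = P_{1,4}$ holds trivially with $C = \Sigma_{1,4}$, we get $\sxd(\Sigma_{1,4}) \ge \binom{1+2}{1} = 3$. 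Combining the two bounds yields the claimed equality. This is exactly the reasoning that makes Theorem~\ref{thm:ahmadi:et:al} a special case of Corollary~\ref{cor:sxd:sos}, so in the paper this ``proof'' is really just the observation that the general machinery recovers the earlier result.

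Alternatively, to stay closer to the original arguments of Fawzi and of Ahmadi et al., one can give a more hands-on lower-bound proof. The idea there is to reduce to $\sxd(\cS_+^3) \ge 3$ (Theorem~\ref{thm:fawzi}): one exhibits a linear slice of $\Sigma_{1,4}$ that is linearly isomorphic to a spectrahedral cone requiring LMIs of size $3$, or one shows directly that $\Sigma_{1,4}$ contains, as a face or as an affine section, a copy of $\cS_+^3$ (via the Gram/Hankel matrix parametrization of univariate quartics), so that any $(\cS_+^\ell)^m$-lift of $\Sigma_{1,4}$ with $\ell < 3$ would induce one of $\cS_+^3$, contradicting Theorem~\ref{thm:fawzi}. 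Either way, the monotonicity of $\sxd$ under taking linear images of slices does the bookkeeping.

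If one wants to prove the lower bound from scratch in the style of Theorem~\ref{thm:tool}, one would need to produce, for each $N$, an $N$-point set $S \subseteq \R$ so that for every $3$-element subset $T \subseteq S$ there is a nonnegative quartic in one variable vanishing exactly on $T$ and positive on $S \setminus T$ — but a nonzero univariate quartic has at most $4$ real roots and a nonnegative one has only even-multiplicity roots, so it can vanish on at most $2$ points. Hence property $(\ast)$ with $k=3$ fails for $\Sigma_{1,4}$, and Theorem~\ref{thm:tool} is the wrong tool applied naively; this is precisely why Corollary~\ref{cor:between} instead takes $f$ to range over \emph{squares} of degree-$d$ polynomials evaluated cleverly, which for $n=1$, $d=2$ means squares of quadratics — a quadratic can be made to vanish at any prescribed single point, and products/sums handle the rest. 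The main obstacle, therefore, is not any hard estimate but getting the combinatorial setup in Theorem~\ref{thm:tool} right: one must recall that in the proof of Corollary~\ref{cor:between} the polynomial $f$ witnessing $(\ast)$ is built from a product of squared linear forms $\prod_{i}(\ell_i(\ux))^2$ over a suitable point configuration, and check that for $n=1$, $2d=4$ such a configuration of size growing without bound exists. Given Corollary~\ref{cor:sxd:sos}, however, the cleanest exposition is simply to invoke it with $(n,d)=(1,2)$.
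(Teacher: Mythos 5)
Your main argument is correct and is exactly the paper's: Theorem~\ref{thm:ahmadi:et:al} is obtained by specializing Corollary~\ref{cor:sxd:sos} to $(n,d)=(1,2)$, i.e.\ the upper bound $\sxd(\Sigma_{1,4})\le 3$ from the standard lift \eqref{sos:lift} and the lower bound $\sxd(\Sigma_{1,4})\ge\binom{3}{1}=3$ from Corollary~\ref{cor:between} with $C=\Sigma_{1,4}$, $X=\R$.

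One correction to your closing aside, though, since it reflects a misreading of Theorem~\ref{thm:tool}: the conclusion of that theorem is $\sxd(C)>k$, so to establish $\sxd(\Sigma_{1,4})\ge 3$ one verifies $(\ast)$ with $k=2$, not $k=3$ (in the proof of Corollary~\ref{cor:between} one takes $k=\binom{n+d}{n}-1$, which here is $2$). Your observation that a non-negative univariate quartic can vanish at no more than two points is therefore not an obstruction but precisely the reason the method tops out at the correct bound: for any distinct $t_1,t_2\in S$ the polynomial $(x-t_1)^2(x-t_2)^2\in\Sigma_{1,4}$ vanishes exactly on $T=\{t_1,t_2\}$ and is positive on $S\setminus T$, so $(\ast)$ holds with $k=2$ for arbitrarily large $S$, and Theorem~\ref{thm:tool} is exactly the right tool rather than ``the wrong tool applied naively.'' This does not affect the validity of your primary proof, which correctly defers to Corollary~\ref{cor:sxd:sos}.
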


The notation in \cite{Fawzi:2018} and \cite{ahmadi2017improving} is different, but results from these sources have a straightforward interpretation as a derivation of the equalities $\sxd(\cS_+^3) = 3$ and $\sxd(\Sigma_{1,4}) = 3$. Theorems~\ref{thm:fawzi}  and \ref{thm:ahmadi:et:al} are special cases of our Corollaries~\ref{cor:sxd:sdp} and \ref{cor:sxd:sos}, respectively.

The proof of the lower bound $\sxd(\cS_+^3) \ge 3$ of Fawzi is based on the idea that a special face-incidence structure of the convex cone $\cS_+^3$ is an obstruction to having an $(\cS_+^2)^m$-lift with a small $m$. Combinatorial obstructions to having an $\R_+^m$-lift are thoroughly studied in linear and discrete optimization  \cite{FKPT:2013}, but for semidefinite optimization, the respective theory is not as developed yet, and Fawzi's contribution is a first step in this new direction. Since $\cS_+^3$ is a non-polyhedral cone, its face lattice is infinite. The relevant face incidences of a given closed convex set $S$ can be extracted from the so-called slack matrix of $S$. Loosely speaking, the slack matrix provides results $f(s)$ of evaluation of all linear inequalities $f \ge 0$ valid for $S$ at all points $s$ of $S$. Recently, Gouveia et al.~\cite{GPT:2013} developed a criterion for testing if a given convex set has a $K$-lift for convex cones $K$ under some mild assumptions on $K$. Arguing by contradiction, Fawzi assumes the existence of $(\cS_+^2)^m$-lift for $\cS_+^3$. He then applies the slack-matrix criterion from \cite{GPT:2013} for $K=(\cS_+^2)^m$ and gives a purely combinatorial graph-theoretic argument, which yields a lower bound on $m$. This lower bound on $m$ depends on the number face-incidences of $\cS_+^3$ taken into account and can be made arbitrarily large by choosing sufficiently many incidences.

Our proof approach to Theorem~\ref{thm:tool} is inspired by the arguments of Fawzi \cite{Fawzi:2018}. Following his ideas, we also rely on the slack-matrix criterion provided in \cite{GPT:2013}. The combinatorial argument from \cite{Fawzi:2018} can be replaced by a direct application of Ramsey's theorem for graphs.  To prove Theorem~\ref{thm:tool}, we use Ramsey's theorem for $k$-uniform hypergraphs, with the  case $k=2$ corresponding to graphs.

Essentially, our proof of Corollary~\ref{cor:sxd:sos} is a generalization of the proof idea from \cite[Section~IV-B]{ahmadi2017improving}. The extension for $n=1$ and an arbitrary $d$ from the case $n=1,d=2$ considered in \cite[Section~IV-B]{ahmadi2017improving} is rather straightforward, but for passing from $n=1$ to an arbitrary $n$, somewhat more work is needed. 

In view of Corollary~\ref{cor:sxd:sos}, to determine $\sxd(P_{n,2d})$, it suffices to combine a classical result of Hilbert with a recent result of Scheiderer. 

\begin{thm}[Hilbert \cite{Hilbert:1888}]
	\label{thm:hilbert}
	\begin{align*}
		& \Sigma_{n,2d} \ne P_{n,2d}  & & \iff  & & n, d \ge 2, \ (n,d) \ne (2,2).
	\end{align*}
\end{thm}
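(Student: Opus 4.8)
I would treat the two implications separately and, in each, isolate the routine cases from the single hard ingredient. It is convenient to pass to homogeneous forms: write $\oP_{N,m}$ and $\overline{\Sigma}_{N,m}$ for the cones of nonnegative, respectively sum-of-squares, forms of degree $m$ in $N$ variables. The degree-$2d$ homogenization $\overline{f}(x_0,\ux)$ of $f \in \R[\ux]_{2d}$ is nonnegative on $\R^{n+1}$ if and only if $f \ge 0$ on $\R^n$ (letting a point run to infinity along a ray shows the leading form is automatically nonnegative), and $f$ is SOS if and only if $\overline{f}$ is a sum of squares of forms (an SOS representation of a form may be taken with homogeneous summands, and one passes between the two pictures by homogenizing, respectively setting $x_0=1$). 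Hence $\Sigma_{n,2d} = P_{n,2d}$ if and only if $\overline{\Sigma}_{n+1,2d} = \oP_{n+1,2d}$, and it suffices to decide the latter equality for $N = n+1$ and $m = 2d$.

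For the inequality $\Sigma_{n,2d} \subsetneq P_{n,2d}$ when $n,d \ge 2$ and $(n,d) \ne (2,2)$, the plan is two explicit base examples plus two monotonicity steps. For the base cases I would use the Motzkin form $x_1^4 x_2^2 + x_1^2 x_2^4 + x_3^6 - 3 x_1^2 x_2^2 x_3^2 \in \oP_{3,6}$ and the Choi--Lam form $x_1^2 x_2^2 + x_2^2 x_3^2 + x_3^2 x_1^2 + x_4^4 - 4 x_1 x_2 x_3 x_4 \in \oP_{4,4}$: nonnegativity is the AM--GM inequality, and the failure of an SOS representation is a short finite-dimensional check — each putative square has support in $\tfrac12$ of the Newton polytope, which pins down the admissible monomials, and then the coefficient of the critical mixed monomial ($x_1^2 x_2^2 x_3^2$, respectively $x_1 x_2 x_3 x_4$ after squaring) would have to be nonnegative, a contradiction. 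The monotonicity steps are: (a) if $F \in \oP_{N,m} \setminus \overline{\Sigma}_{N,m}$ then the same $F$, viewed in one more variable, lies in $\oP_{N+1,m} \setminus \overline{\Sigma}_{N+1,m}$ (set the new variable to $0$ in a putative SOS representation); and (b) if $F \in \oP_{N,m} \setminus \overline{\Sigma}_{N,m}$ then $x_N^2 F \in \oP_{N,m+2} \setminus \overline{\Sigma}_{N,m+2}$ (restricting $x_N^2 F = \sum_j \sigma_j^2$ to $x_N = 0$ forces $x_N \mid \sigma_j$, and cancelling $x_N^2$ recovers an SOS representation of $F$). From $\oP_{3,6}$, applying (a) then (b) yields $\oP_{N,m} \ne \overline{\Sigma}_{N,m}$ for all $N \ge 3$ and even $m \ge 6$; from $\oP_{4,4}$, applying (a) yields it for all $N \ge 4$ and $m = 4$. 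Together these cover all $N \ge 3$ and even $m \ge 4$ with $(N,m) \ne (3,4)$, that is, all $n \ge 2$ and $d \ge 2$ with $(n,d) \ne (2,2)$.

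For the equality $\Sigma_{n,2d} = P_{n,2d}$ in the remaining cases I would argue as follows. If $d \le 1$, then $\overline{f}$ is a positive semidefinite quadratic form (or a nonnegative constant), which diagonalizes into a sum of squares of linear forms; dehomogenizing shows $f$ is SOS. If $n \le 1$, then $\overline{f}$ is a nonnegative binary form; by the fundamental theorem of algebra it factors into squares of linear forms and positive-definite binary quadratics, each of the latter is a sum of two squares, and a product of sums of two squares is again a sum of two squares (the Gaussian-integer norm identity), so $\overline{f}$ is SOS. The one genuinely hard case is $(n,d) = (2,2)$, i.e. the assertion $\oP_{3,4} = \overline{\Sigma}_{3,4}$ that every nonnegative ternary quartic form is a sum of squares of quadratic forms; this is exactly Hilbert's classical ternary quartic theorem \cite{Hilbert:1888}, which I would invoke as a black box.

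As for where the difficulty lies: everything except the ternary quartic case is elementary — the base examples need only AM--GM and a coefficient comparison, the monotonicity steps are one-line substitution arguments, and the equality cases $n \le 1$ and $d \le 1$ are classical linear algebra over $\R[\ux]$. The substantive obstacle is Hilbert's ternary quartic theorem: it is the only place a nontrivial classical result is required, it has no elementary proof, and it is the real crux of the equivalence.
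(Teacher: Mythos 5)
The paper offers no proof of this statement at all: Theorem~\ref{thm:hilbert} is quoted as a classical result of Hilbert and used as a black box, so there is no internal argument to compare yours against. Your write-up is the standard modern proof and it is correct: the homogenization dictionary (nonnegativity and SOS-ness both transfer between $f$ and $\overline{f}$, using that an SOS representation of a form can be taken with homogeneous summands), the Motzkin and Choi--Lam forms with the Newton-polytope/coefficient argument for non-SOS-ness, the two padding steps in the number of variables and in the degree, and the elementary equality cases $d\le 1$ (diagonalization of PSD quadratic forms) and $n\le 1$ (factorization of nonnegative binary forms plus the two-squares identity) are all sound, and the coverage count $(N,m)$ with $N\ge 3$, $m\ge 4$ even, $(N,m)\ne(3,4)$ matches $n,d\ge 2$, $(n,d)\ne(2,2)$ exactly. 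Two remarks. First, your argument is not the one Hilbert gave: his 1888 proof of the strict inclusions was non-constructive, and the explicit counterexamples you use (Motzkin, Choi--Lam) date from the 1960s--70s; your route is the cleaner and now-standard one. Second, the one place you invoke \cite{Hilbert:1888} as a black box --- the ternary quartic theorem $\oP_{3,4}=\overline{\Sigma}_{3,4}$ --- is indeed the genuinely deep content of the cited paper and has no short proof, so treating it as given is the reasonable choice and is exactly what the paper itself does for the whole theorem; just be aware that this means your proof, like the paper's citation, is not self-contained at the single point where the reference is essential.
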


\begin{thm}[Scheiderer {\cite[Corollary~4.25]{Scheiderer:2018}}]
	\label{thm:scheiderer}
	If $n \ge 2, d \ge 2$ and $(n,d) \ne (2,2)$, then $P_{n,2d}$ has no semidefinite extended formulation.
\end{thm}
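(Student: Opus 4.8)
\emph{Proof proposal.} This is Scheiderer's theorem \cite{Scheiderer:2018}, which I would take as an input rather than reprove; nevertheless, here is the shape of an argument and where its difficulty sits. The tools of this paper do not reach it: applying Corollary~\ref{cor:between} with $X=\R^n$ and $C=P_{n,2d}$ gives only the \emph{finite} bound $\sxd(P_{n,2d})\ge\binom{n+d}{n}$, and Theorem~\ref{thm:tool} cannot do better, since property $(\ast)$ for the cone $P_{n,2d}$ can hold only for $k$ below a bound depending on $n$ and $d$ — a non-negative polynomial has vanishing gradient at each of its zeros, so it cannot vanish on an arbitrarily large prescribed finite set while keeping bounded degree. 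A genuinely global argument is thus required, and the plan is to argue by contradiction: assume $P_{n,2d}$ is a linear image of a spectrahedron.

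First a reduction. Spectrahedral shadows are closed under linear images, and restricting polynomials to subvarieties is linear; for instance the substitution $x_3=\cdots=x_n=0$ carries $P_{n,2d}$ \emph{onto} $P_{2,2d}$, since $g(x_1,x_2)\ge 0$ on $\R^2$ forces $g\ge 0$ on $\R^n$. Iterating reductions of this kind should bring the exceptional range $n,d\ge 2$, $(n,d)\ne(2,2)$ down to a small number of base cases, e.g.\ $(n,d)\in\{(2,3),(3,2)\}$, which are exactly where Hilbert's Theorem~\ref{thm:hilbert} first gives $\Sigma_{n,2d}\subsetneq P_{n,2d}$. Then dualize: a closed convex cone is a spectrahedral shadow iff its dual is, so it suffices to show that the truncated moment cone $P_{n,2d}^{*}=\cl\cone\{(\ux^{\alpha})_{|\alpha|\le 2d}:x\in\R^n\}$ — the cone over the Veronese image of $\R^n$ — is not a spectrahedral shadow.

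The canonical semidefinite outer approximations of this moment cone are the Lasserre relaxations built from positive semidefinite moment matrices of order $e\ge d$; the level $e=d$ is precisely $\Sigma_{n,2d}^{*}$, so by Hilbert it is not tight in the cases at hand, and because $\R^n$ is non-compact no finite level is tight. I would make this quantitative, exhibiting for every $e$ a degree-$2d$ positive semidefinite truncated moment functional that extends to level $e$ yet is not a genuine moment functional — a concrete witness that the hierarchy never becomes exact.

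The decisive and hardest step is the passage from ``no finite relaxation is exact'' to ``no semidefinite extended formulation exists at all'': a spectrahedral shadow need not be any relaxation in the hierarchy, so one must invoke a structural theorem on spectrahedral shadows — Scheiderer's — that limits their complexity (roughly, the facial structure and the behaviour of extreme rays near the boundary at infinity) and thereby rules out the moment cone over $\R^n$ being one. Reconstructing that obstruction is the real work; it lies well outside the Ramsey-type methods used elsewhere in this paper and is carried out in \cite[\S4]{Scheiderer:2018}. Finally, combined with Theorem~\ref{thm:hilbert} and Corollary~\ref{cor:sxd:sos}, the present statement pins down $\sxd(P_{n,2d})$ completely: it equals $\binom{n+d}{n}$ when $n\le 1$, $d\le 1$, or $(n,d)=(2,2)$, and $\infty$ otherwise.
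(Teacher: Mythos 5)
The paper offers no proof of this statement --- it is imported verbatim as Corollary~4.25 of \cite{Scheiderer:2018} --- and you rightly treat it the same way, as an external input rather than something the paper's Ramsey-type machinery can reach (your observation that property $(\ast)$ caps out at a finite $k$ for $P_{n,2d}$ is correct). Your sketch of how Scheiderer's argument is organized (reduction to base cases via restriction and degree truncation, dualization to the moment cone, and a structural obstruction for spectrahedral shadows) is a fair account of where the difficulty sits, but since you explicitly defer the decisive step to \cite{Scheiderer:2018}, the citation is carrying the proof here exactly as it does in the paper.
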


Directly combining Corollary~\ref{cor:sxd:sos}, Theorem~\ref{thm:hilbert} and Theorem~\ref{thm:scheiderer}, we get

\begin{cor}
	\label{cor:nonneg:cone}
	\[
		\sxd(P_{n,2d})= \sxc(P_{n,2d})=
		\begin{cases} 
			\infty, & \text{if} \ n, d \ge 2, \ (n,d) \ne (2,2).
			\\ \binom{n+d}{n}, & \text{otherwise}.
		\end{cases}
	\]
\end{cor}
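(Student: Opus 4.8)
The plan is to split along the dichotomy in the statement and simply assemble the three cited ingredients. Suppose first that $n, d \ge 2$ and $(n,d) \ne (2,2)$. Then Theorem~\ref{thm:scheiderer} asserts that $P_{n,2d}$ has no semidefinite extended formulation whatsoever, i.e.\ no $(\cS_+^k)^m$-lift for any $k$ and any finite $m$; in particular (taking $m = 1$) it has no $\cS_+^k$-lift for any $k$. Unwinding the definitions of $\sxd$ and $\sxc$, both values are therefore $\infty$, which is the first branch of the formula.

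In the complementary case --- that is, $n \le 1$, or $d \le 1$, or $(n,d) = (2,2)$ --- I would reduce to the SOS cone. These are precisely the cases in which Theorem~\ref{thm:hilbert} (Hilbert) gives the equality of cones $\Sigma_{n,2d} = P_{n,2d}$. Since $\sxd$ and $\sxc$ are invariants of the set, we get $\sxd(P_{n,2d}) = \sxd(\Sigma_{n,2d})$ and $\sxc(P_{n,2d}) = \sxc(\Sigma_{n,2d})$, and then Corollary~\ref{cor:sxd:sos} evaluates both of these to $\binom{n+d}{n}$. This is the second branch, so the two cases together prove the corollary.

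There is essentially no substantive obstacle in this argument: the entire content is carried by the three inputs being combined --- Scheiderer's deep non-representability theorem, Hilbert's classical characterization of when $\Sigma_{n,2d} = P_{n,2d}$, and Corollary~\ref{cor:sxd:sos}, which is the genuinely new quantitative result obtained earlier from Theorem~\ref{thm:tool}. The only bookkeeping to watch is that the two case hypotheses ``$n,d \ge 2$ and $(n,d)\ne(2,2)$'' and its negation are exactly the dichotomy of Theorem~\ref{thm:hilbert}, so nothing is missed; in particular the degenerate instances $n = 0$ or $d = 0$, where $P_{n,2d}$ is a half-line and $\binom{n+d}{n} = 1$, fall into the second branch and are handled correctly.
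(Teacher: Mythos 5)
Your proof is correct and follows exactly the paper's own argument: Scheiderer's theorem gives the infinite case, and in the complementary case Hilbert's equality $\Sigma_{n,2d}=P_{n,2d}$ reduces everything to Corollary~\ref{cor:sxd:sos}. (The aside about $n=0$ is moot since the paper fixes $n$ to be a positive integer, but this does not affect anything.)
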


Results from \cite{GPT:2013} imply that $\sxc(C)$ and $\sxd(C)$ are invariant under duality of cones: if $C \subseteq \R^n$ is a $n$-dimensional pointed closed convex cone, then 
\begin{align}
	\sxc(C) & =\sxc(C^\ast), \label{inv:dual:sxc}	
	\\  \sxd(C) & =\sxd(C^\ast), \label{inv:dual:sxd}
\end{align}
where $C^\ast$ is the dual cone of $C$.  Via dualization, Corollaries~\ref{cor:between} and \ref{cor:sxd:sos} yield a number of consequences. We introduce the \emph{moment cones}
\begin{align*}
	M_{n,2d} &:= \cl ( \cone (\setcond{v_{n,2d}(x)}{x \in \R^n})),
	\\ M_{n,2d}(X) & := \cl ( \cone ( \setcond{v_{n,2d}(x)}{x \in X}) & & (X \subseteq \R^n),
\end{align*}  
where $\cl$ stands for the Euclidean topological closure and $\cone$ for the convex conic hull. Representability of the moment cones via $\cS_+^k$-lifts has been studied by Scheiderer: 

\begin{thm}[Scheiderer {\cite[Corollary~4.24]{Scheiderer:2018}}]
	\label{thm:sch:mom:cones}
	Let $X \subseteq \R^n$ be a semi-algebraic set with non-empty interior and let $n, d \ge 2$ and $(n,d) \ne (2,2)$. Then $M_{n,2d}(X)$ has no semidefinite extended formulation. In particular, $M_{n,2d}$ has no semidefinite extended formulation, too. 
\end{thm}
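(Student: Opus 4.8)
The plan is to pass to the dual side, where the facts already assembled in the excerpt can be applied directly. Put $N:=\binom{n+2d}{n}$ and identify $\R^N$ with the coefficient space of $\R[\ux]_{2d}$ via monomial coefficients, and simultaneously with its dual, so that the natural pairing is the standard inner product and the evaluation functional $f\mapsto f(x)$ is represented by the vector $v_{n,2d}(x)$. Under this identification $M_{n,2d}(X)$ is a closed convex cone in $\R^N$ whose dual cone is $\setcond{f\in\R[\ux]_{2d}}{f(x)\ge 0\ \text{for all}\ x\in X}=P_{n,2d}(X)$, and by the bipolar theorem $P_{n,2d}(X)$ has dual $M_{n,2d}(X)$ in turn. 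Because $X$ has non-empty interior, a polynomial of degree at most $2d$ that vanishes on $X$ vanishes identically, so $P_{n,2d}(X)$ is pointed — equivalently, $M_{n,2d}(X)$ is full-dimensional. Conversely $P_{n,2d}(X)\supseteq\Sigma_{n,2d}$, and $\Sigma_{n,2d}$ is full-dimensional in $\R[\ux]_{2d}$, since the linear map $A\mapsto v_{n,d}^\top A v_{n,d}$ maps $\cS^{\binom{n+d}{n}}$ onto $\R[\ux]_{2d}$ (every monomial of degree $\le 2d$ splits into a product of two monomials of degree $\le d$) and sends positive definite matrices into $\Sigma_{n,2d}$; hence $M_{n,2d}(X)$ is pointed as well. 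Thus $M_{n,2d}(X)$ is an $N$-dimensional pointed closed convex cone, and \eqref{inv:dual:sxd} gives $\sxd(M_{n,2d}(X))=\sxd(P_{n,2d}(X))$.

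So it remains to prove $\sxd(P_{n,2d}(X))=\infty$ for every semi-algebraic $X\subseteq\R^n$ with non-empty interior and all $n,d\ge 2$ with $(n,d)\ne(2,2)$. The case $X=\R^n$ is immediate: there $P_{n,2d}(X)=P_{n,2d}$, and Theorem~\ref{thm:scheiderer} gives $\sxd(P_{n,2d})=\infty$; dualizing back through \eqref{inv:dual:sxd} already yields the ``in particular'' clause $\sxd(M_{n,2d})=\infty$.

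The step I expect to be genuinely hard is removing the restriction $X=\R^n$, and I do not see how to do it with the tools developed in this paper. The combinatorial obstruction behind Theorem~\ref{thm:tool} — and, more generally, any argument routed through the slack-matrix criterion of \cite{GPT:2013} — produces only \emph{finite} lower bounds: for $C=P_{n,2d}(X)$, Corollary~\ref{cor:between} gives no more than $\sxd(P_{n,2d}(X))\ge\binom{n+d}{n}$. Nor do the obvious set-theoretic reductions help, since having a semidefinite extended formulation is passed neither to subcones nor to supercones, and $P_{n,2d}(X)$ merely satisfies $P_{n,2d}\subseteq P_{n,2d}(X)\subseteq P_{n,2d}(B)$ for a ball $B\subseteq X$, with no face relationship to either end. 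Upgrading $\binom{n+d}{n}$ to $\infty$ requires Scheiderer's obstruction, which exploits the fine local semi-algebraic structure of $P_{n,2d}(X)$ near an interior point of $X$; this is precisely \cite[Corollary~4.24]{Scheiderer:2018}, which I would invoke rather than reprove.
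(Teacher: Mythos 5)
This theorem is quoted from Scheiderer's work and is given no proof in the paper, so your honest conclusion --- that the general case for arbitrary semi-algebraic $X$ with non-empty interior cannot be reached with the tools developed here and must simply be cited as \cite[Corollary~4.24]{Scheiderer:2018} --- matches the paper exactly. Your preliminary reductions are nonetheless sound: the duality argument giving $\sxd(M_{n,2d}(X))=\sxd(P_{n,2d}(X))$ (via Lemma~\ref{dual:nonneg}, the pointedness/full-dimensionality checks and \eqref{inv:dual:sxd}) and the derivation of the ``in particular'' clause from Theorem~\ref{thm:scheiderer} are correct, and your observation that Corollary~\ref{cor:between} can only ever produce the finite bound $\binom{n+d}{n}$ correctly explains why the paper's own machinery cannot replace Scheiderer's obstruction.
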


If $n=1$ or $d=1$ or $(n,d)=(2,2)$, Theorem~\ref{thm:sch:mom:cones} does not rule out the possibility of $\sxd(M_{n,2d}(X))$ being finite. In these cases, the following consequence of  Corollary~\ref{cor:between} and \eqref{inv:dual:sxd} can be used to provide lower bounds on $\sxd(M_{n,2d}(X))$: 

\begin{cor}
	\label{cor:mom:cones:X}
	For every $X \subseteq \R^n$ with non-empty interior,
	\[ \sxd(M_{n,2d}(X)) \ge \binom{n+d}{n}. \]
\end{cor}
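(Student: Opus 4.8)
The plan is to deduce Corollary~\ref{cor:mom:cones:X} from Corollary~\ref{cor:between} by duality, using the identity \eqref{inv:dual:sxd}. The key observation is that the moment cone $M_{n,2d}(X)$ is, up to identification of the ambient vector spaces, the dual cone of $P_{n,2d}(X)$. Indeed, a linear functional on $\R[\ux]_{2d}$ is nothing but a vector of ``moments'' indexed by the monomials $\ux^\alpha$ with $|\alpha|\le 2d$, and a functional is non-negative on all of $P_{n,2d}(X)$ precisely when it lies in the closed conic hull of the point evaluations $f\mapsto f(x)$ for $x\in X$; these evaluations are exactly the vectors $v_{n,2d}(x)$. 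So $P_{n,2d}(X)^\ast = M_{n,2d}(X)$ under the standard pairing between $\R[\ux]_{2d}$ and its dual.

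The steps, in order, would be: (1) Fix $X\subseteq\R^n$ with non-empty interior; apply Corollary~\ref{cor:between} with $C:=P_{n,2d}(X)$ itself (the hypothesis $\Sigma_{n,2d}\subseteq C\subseteq P_{n,2d}(X)$ is satisfied trivially) to get $\sxd(P_{n,2d}(X))\ge\binom{n+d}{n}$. (2) Verify that $P_{n,2d}(X)$ is a full-dimensional pointed closed convex cone in $\R[\ux]_{2d}$, so that the duality invariance \eqref{inv:dual:sxd} is applicable: full-dimensionality holds because $X$ has non-empty interior (so the only polynomial vanishing on an open subset of $\R^n$ is zero, and moreover strictly positive polynomials give interior points), and pointedness is clear since $f,-f\ge0$ on $X$ forces $f\equiv0$ on a set with non-empty interior, hence $f=0$. (3) Invoke \eqref{inv:dual:sxd} to conclude $\sxd(P_{n,2d}(X))=\sxd(P_{n,2d}(X)^\ast)=\sxd(M_{n,2d}(X))$. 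Combining with step (1) gives the claimed bound $\sxd(M_{n,2d}(X))\ge\binom{n+d}{n}$.

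The main obstacle I anticipate is purely bookkeeping rather than conceptual: one must be careful about which vector space the moment cone lives in and confirm that the bilinear pairing implicit in the definitions of $P_{n,2d}(X)$ and $M_{n,2d}(X)$ is the one that makes them mutually dual, including the closure operation in the definition of $M_{n,2d}(X)$ (the closure is exactly what is needed for the bipolar theorem to give $P_{n,2d}(X)=M_{n,2d}(X)^\ast$ back, though for the stated inequality only one direction is used). One should also double-check the hypotheses of the duality-invariance statement from \cite{GPT:2013} — that it genuinely requires only pointedness and full-dimensionality of the cone, both of which we have verified for $P_{n,2d}(X)$ under the assumption that $X$ has non-empty interior. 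Once these points are pinned down, the corollary follows in a few lines.
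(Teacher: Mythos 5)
Your proposal is correct and follows essentially the same route as the paper: apply Corollary~\ref{cor:between} with $C=P_{n,2d}(X)$, identify $M_{n,2d}(X)=P_{n,2d}(X)^\ast$ via the bipolar theorem (the paper isolates this as Lemma~\ref{dual:nonneg}), and transfer the bound through the duality invariance \eqref{inv:dual:sxd}. Your extra verification of pointedness and full-dimensionality of $P_{n,2d}(X)$ is exactly the hypothesis the paper records in its background section, so nothing is missing.
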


As a direct consequence of Corollary~\ref{cor:between}, \eqref{inv:dual:sxd} and Theorem~\ref{thm:sch:mom:cones} we also obtain the exact values of $M_{n,2d}$ for all $n$ and $d$:
\begin{cor}
	\label{cor:mom:cones}
	\[
	\sxd(M_{n,2d})= \sxc(M_{n,2d}) =
	\begin{cases} 
	\infty, & \text{if} \ n, d \ge 2, \ (n,d) \ne (2,2),
	\\ \binom{n+d}{n}, & \text{otherwise}.
	\end{cases}
	\]	
\end{cor}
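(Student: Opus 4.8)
The plan is to derive Corollary~\ref{cor:mom:cones} purely by combining the already-established results, with no new combinatorial or geometric work needed. The key observation is that the moment cone $M_{n,2d}$ is, up to closure of a conic hull, exactly the dual object to $\Sigma_{n,2d}$: by standard duality in polynomial optimization, the dual cone of $\Sigma_{n,2d}$ (viewed inside the space of linear functionals on $\R[\ux]_{2d}$, identified with a suitable truncated moment space) is precisely $M_{n,2d}$. Since both cones are full-dimensional and pointed in their respective spaces, the duality invariances \eqref{inv:dual:sxc} and \eqref{inv:dual:sxd} apply, giving $\sxc(M_{n,2d}) = \sxc(\Sigma_{n,2d})$ and $\sxd(M_{n,2d}) = \sxd(\Sigma_{n,2d})$.

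From there the case split writes itself. In the regime $n,d \ge 2$ with $(n,d) \ne (2,2)$, Theorem~\ref{thm:sch:mom:cones} directly gives that $M_{n,2d}$ has no semidefinite extended formulation, i.e. $\sxd(M_{n,2d}) = \infty$, and since $\sxd \le \sxc$ always, also $\sxc(M_{n,2d}) = \infty$. In the complementary regime — that is, $n=1$, or $d=1$, or $(n,d)=(2,2)$ — we invoke Corollary~\ref{cor:between} with the choice $X = \R^n$ (which has non-empty interior) and $C = \Sigma_{n,2d}$, which trivially satisfies $\Sigma_{n,2d} \subseteq C \subseteq P_{n,2d}(\R^n) = P_{n,2d}$, to obtain $\sxd(\Sigma_{n,2d}) \ge \binom{n+d}{n}$; combined with the upper bound \eqref{Sigma:bounds} this yields $\sxd(\Sigma_{n,2d}) = \sxc(\Sigma_{n,2d}) = \binom{n+d}{n}$ (this is just Corollary~\ref{cor:sxd:sos}), and transporting through \eqref{inv:dual:sxc}, \eqref{inv:dual:sxd} gives $\sxd(M_{n,2d}) = \sxc(M_{n,2d}) = \binom{n+d}{n}$.

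The only point requiring a modicum of care — and the one I would expect to be the main (mild) obstacle — is the precise statement and verification of the duality $M_{n,2d} = \Sigma_{n,2d}^\ast$, including checking the hypotheses of \eqref{inv:dual:sxc}–\eqref{inv:dual:sxd}: one must confirm that $\Sigma_{n,2d}$ is a pointed, full-dimensional, closed convex cone in $\R[\ux]_{2d}$ (full-dimensionality holds since it contains the full-dimensional set of SOS polynomials with strictly positive constant term and nonzero leading behavior; pointedness holds since $f, -f \in \Sigma_{n,2d}$ forces $f=0$; closedness is classical), and that under the natural pairing between $\R[\ux]_{2d}$ and the moment space the bidual/closure operations line up so that $\Sigma_{n,2d}^\ast$ equals $M_{n,2d}$ as defined via $\cl(\cone(\cdots))$. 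This is entirely standard material (see the references \cite{Marshall:book:2008,Laurent:2009,Lasserre:book:2015} cited earlier), so the corollary follows with essentially no additional effort once the duality dictionary is spelled out.
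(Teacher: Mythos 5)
Your overall strategy (dualize, then split into the Scheiderer regime and the Hilbert regime) is the same as the paper's, and your treatment of the case $n,d\ge 2$, $(n,d)\ne(2,2)$ is fine since there you only use Theorem~\ref{thm:sch:mom:cones} together with $\sxd\le\sxc$. But the duality you hang the remaining case on is false as stated: it is \emph{not} true in general that $\Sigma_{n,2d}^\ast = M_{n,2d}$. By Remark~\ref{rem:identificiation} and \eqref{double:dual}, the moment cone satisfies $M_{n,2d} = P_{n,2d}^\ast$ (this is Lemma~\ref{dual:nonneg}), whereas $\Sigma_{n,2d}^\ast$ is the strictly larger cone of linear functionals non-negative on all SOS polynomials (the ``PSD pseudo-moment'' cone). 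Dualizing the inclusion $\Sigma_{n,2d}\subseteq P_{n,2d}$ and using biduality of closed convex cones shows $\Sigma_{n,2d}^\ast = M_{n,2d}$ if and only if $\Sigma_{n,2d}=P_{n,2d}$; whenever a non-negative polynomial is not SOS (e.g.\ the Motzkin form), a separating functional lies in $\Sigma_{n,2d}^\ast\setminus M_{n,2d}$. So the ``standard duality dictionary'' you defer to in your last paragraph does not exist in the form you need, and the identities $\sxd(M_{n,2d})=\sxd(\Sigma_{n,2d})$, $\sxc(M_{n,2d})=\sxc(\Sigma_{n,2d})$ are unjustified as general facts.

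The gap is repairable, and in two ways. Either note explicitly that in the regime you actually use the duality ($n=1$, or $d=1$, or $(n,d)=(2,2)$) Hilbert's Theorem~\ref{thm:hilbert} gives $\Sigma_{n,2d}=P_{n,2d}$, hence $\Sigma_{n,2d}^\ast = P_{n,2d}^\ast = M_{n,2d}$ there, and then your transport via \eqref{inv:dual:sxc}--\eqref{inv:dual:sxd} goes through; or, more cleanly, dualize against $P_{n,2d}$ from the start: $M_{n,2d}=P_{n,2d}^\ast$ by Lemma~\ref{dual:nonneg}, so by \eqref{inv:dual:sxc} and \eqref{inv:dual:sxd} the cone $M_{n,2d}$ has the same $\sxd$ and $\sxc$ as $P_{n,2d}$, and Corollary~\ref{cor:nonneg:cone} finishes both cases at once. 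The latter is exactly the paper's proof.
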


The cone 
\[
	\CP_k := \setcond{A \in \cS^k}{x^\top A x \ge 0  \ \text{for all} \ x \in \R_+^k}
\]
is known as the cone of \emph{copositive matrices} of size $k$. Its dual cone $\CP_k^\ast$ is the closed convex cone generated by rank-one positive semidefinite matrices $x x^\top$ with $x \in \R_+^n$. Elements of $\CP_k^\ast$ are called \emph{completely positive} matrices. Note that various well-known hard combinatorial optimization problems can be modeled as conic optimization problems with respect to the cones $\CP_k$ and $\CP_k^\ast$ \cite{Duer:2010}. We provide a lower bound the semidefinite extension degree of both cones and determine it exactly for small values of $k$:

\begin{cor}
	\label{cor:copositive}
	One has 
	$\sxd(\CP_k) = \sxd(\CP_k^\ast) \ge k$, and the equality $\sxd(\CP_k) = \sxd(\CP_k^\ast)= k$ holds if $k  \le 4$. 
\end{cor}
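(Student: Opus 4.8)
The plan is to treat the lower bound and the upper bound separately, using duality together with the main theorem for the lower bound and an explicit construction for the small-$k$ upper bound. For the equality $\sxd(\CP_k) = \sxd(\CP_k^\ast)$, I would invoke \eqref{inv:dual:sxd}: the cone $\CP_k \subseteq \cS^k$ is a full-dimensional pointed closed convex cone (it is closed, it contains $\cS_+^k$ so it is full-dimensional, and it is pointed since $A, -A \in \CP_k$ forces $x^\top A x = 0$ on $\R_+^k$ and hence, by polarization on the interior of the positive orthant, $A=0$), so $\sxc$ and $\sxd$ agree on $\CP_k$ and $\CP_k^\ast$. So it suffices to bound $\sxd(\CP_k)$.

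For the lower bound $\sxd(\CP_k) \ge k$, I would apply Theorem~\ref{thm:tool} with $d=1$ in a suitable reinterpretation. Identify a quadratic form $q_A(\ux) = \ux^\top A \ux$ with the matrix $A$; then $\CP_k$ corresponds to the cone of quadratic forms nonnegative on $X = \R_+^k$, which is precisely $P_{k,2}(X)$ restricted to homogeneous quadratics — and more to the point, $\CP_k \supseteq \cS_+^k \cong \Sigma_{k,2}^{\mathrm{hom}}$, the cone of SOS quadratic forms. The homogeneous analogue of Corollary~\ref{cor:between} should give $\sxd(\CP_k) \ge \binom{k}{1} \cdot$ (the number of degree-$1$ monomials), i.e. $\ge k$; concretely, one needs finite subsets $S$ of the interior of $\R_+^k$ of arbitrarily large cardinality such that for every $k$-element subset $T \subseteq S$ there is a quadratic form in $\cS_+^k$ (hence in $\CP_k$) vanishing on $T$ and positive on $S \setminus T$. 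Choosing the points of $S$ on the moment curve (or any set in sufficiently general position), the points $t_1,\dots,t_k$ of $T$ span a hyperplane through the origin with normal $a$, and $q(\ux) = (a^\top \ux)^2$ vanishes exactly on that hyperplane; as long as $S$ is chosen so that no other point of $S$ lies in the span of any $k$ of them, this $q$ satisfies $(\ast)$. Hence $\sxd(\CP_k) > k-1$, i.e. $\sxd(\CP_k) \ge k$. (One should be slightly careful that $S$ must lie in the \emph{interior} of $\R_+^k$ and still be in general linear position; this is easily arranged, e.g. by a small perturbation of points on the moment curve pushed into the open orthant.)

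For the upper bound when $k \le 4$, I would use the classical fact — due to Diananda, with the $k=5$ boundary case going back to Hall and Newman — that for $k \le 4$ every copositive matrix decomposes as $A = P + N$ with $P \in \cS_+^k$ and $N$ a symmetric entrywise-nonnegative matrix. Entrywise nonnegativity is $\binom{k+1}{2}$ linear inequalities, each an LMI of size $1$ (hence of size $\le 4$), and $\cS_+^k$ is an LMI of size $k \le 4$. Thus $\CP_k$ has a semidefinite extended formulation with LMIs of size at most $4$, giving $\sxd(\CP_k) \le 4 \le k$ — combined with the lower bound $\sxd(\CP_k) \ge k$, this forces $\sxd(\CP_k) = k$ for $k \le 4$, and then $\sxd(\CP_k^\ast) = k$ by duality. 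Note $\sxd(\CP_k) = k$ is trivial for $k \le 2$ (where $\CP_k = \cS_+^k + \{\text{nonneg.\ matrices}\}$ already, and in fact $\CP_1 = \R_+$, $\sxd = 1$, etc.), so the content is really $k = 3, 4$.

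The main obstacle I anticipate is making the lower-bound step fully rigorous: Theorem~\ref{thm:tool} as stated is about $C \subseteq P_{n,2d}(X)$ for inhomogeneous polynomials, whereas $\CP_k$ naturally lives among homogeneous quadratics in $k$ variables. One clean fix is to dehomogenize — restrict quadratic forms to the affine hyperplane $\{x_k = 1\}$, so that $\CP_k$ maps to a cone of (inhomogeneous) quadratic polynomials in $k-1$ variables that is sandwiched between $\Sigma_{k-1,2}$ and $P_{k-1,2}(X')$ for an appropriate full-dimensional $X' \subseteq \R^{k-1}$ — and then Corollary~\ref{cor:between} gives $\sxd \ge \binom{(k-1)+1}{k-1} = k$ directly, using that a linear isomorphism of the ambient space does not change $\sxd$. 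Verifying that this dehomogenization is a linear isomorphism onto its image and that it carries $\cS_+^k$ onto a cone containing $\Sigma_{k-1,2}$ and contained in some $P_{k-1,2}(X')$ with $\mathrm{int}(X') \ne \emptyset$ is the step that requires care, but it is routine once set up.
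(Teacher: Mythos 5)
Your proposal is correct and follows essentially the same route as the paper: dehomogenize via $A \mapsto q_A(x_1,\ldots,x_{k-1},1)$, which carries $\CP_k$ onto $P_{k-1,2}(\R_+^{k-1})$, apply Corollary~\ref{cor:between} with $n=k-1$, $d=1$ to get the lower bound $\binom{k}{k-1}=k$, use the decomposition $\CP_k=\cS_+^k+\mathcal{N}_+^k$ for $k\le 4$ together with Lemma~\ref{lem:sxd:sum} for the upper bound, and \eqref{inv:dual:sxd} for $\CP_k^\ast$. One slip in your auxiliary homogeneous sketch: to conclude $\sxd(\CP_k)>k-1$ from Theorem~\ref{thm:tool} you must verify $(\ast)$ for $(k-1)$-element subsets $T$ (which do span a hyperplane through the origin when $S$ is in general linear position), not $k$-element ones --- $k$ generic points span all of $\R^k$, so no nonzero linear form vanishes on them; the dehomogenization route you propose as the ``clean fix'' avoids this entirely and is exactly what the paper does.
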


The exact values $\sxd(\CP_k)$ for $k > 4$ are left undetermined. In fact, it is not even known if these values are finite \cite[\S5.2]{Scheiderer:2018}.

From the above results we draw the conclusion that standard SOS-based approaches to polynomial optimization \emph{necessarily} lead to semidefinite problems with large LMIs, which are usually hard to solve in practice. A solution to this issue could be to use sparsity or symmetry of underlying problems, if applicable; see \cite[Ch.~8]{Lasserre:book:2015}, \cite{MR3395550} and \cite{ahmadi2017improving}. Alternatively, one can look for new ways of reduction of polynomial optimization problems to convex problems. There are a number of results in this direction \cite{MR0214374,MR2727311,MR2968862,MR3499559,MR3691721,dressler2016approach,Dressler:Diss,MR3592772}. 

\subsection{Results for the SONC cone}

An alternative approach to polynomial optimization, suggested by Dressler et al. in \cite{MR3691721,dressler2016approach,Dressler:Diss}, is based on the cone $C_{n,2d}$ of \emph{sums of non-negative circuit polynomials} (abbreviated as \emph{SONC polynomials}) in $n$ variables of degree at most $2d$. As reported in \cite{dressler2016approach}, the optimization approach  based on $C_{n,2d}$ leads to convex problems that frequently can be solved efficiently in practice. Furthermore, this alternative approach seems to be not as sensitive to the choice of $n$ and $d$ as the well-known approach based on $\Sigma_{n,2d}$ \cite{seidler2018experimental}.

We transform the original definition of $C_{n,2d}$, which is given Section~\ref{sec:SONC}, to a less technical definition more suitable for our purposes. For a finite set $A \subseteq \Z_+^n$, we first introduce the cone 
\begin{equation}
\label{PnA:eq}
P_{n,A}:= \setcond{f = \sum_{\alpha \in A} f_\alpha \ux^\alpha}{f \ge 0 \ \text{on} \ \R^n}.
\end{equation}
of non-negative polynomials $f$ with the support $\setcond{\alpha}{f_\alpha \ne 0}$ of $f$ contained in $A$.
While for an arbitrary $A$, it is hard to find an explicit description of $P_{n,A}$ in terms of the coefficients $f_\alpha$ of the polynomial $f$, there are special cases of `sparse' sets $A$, in which such a description is known. If the convex hull of $A$ is a $k$-dimensional simplex with vertices $\alpha(0),\ldots,\alpha(k)$ belonging to $(2\Z_+)^n$ and $A$ consists of the $k+1$ vertices of this simplex and another point $\beta$ that lies in the relative interior of the simplex, then $P_{n,A}$ has a simple inequality description derivable from the weighted arithmetic-geometric mean inequality. We denote by $\cA_{n,2d}$ the family of all $A$ having the special form as above and satisfying the inclusion $A \subseteq \setcond{\alpha \in \Z_+^n}{|\alpha| \le 2d}$. The inclusion for $A$ ensures  $P_{n,A} \subseteq P_{n,2d}$ for every $A \in \cA_{n,2d}$ so that one has $C_{n,2d} \subseteq P_{n,2d}$. 

It turns out that the SONC cone $C_{n,2d}$ can be represented as the sum 
\[
	C_{n,2d} = \sum_{A \in \cA_{n,2d}} P_{n,A}.
\] 

 The following theorem provides theoretical support to the informal message that $C_{n,2d}$ is `practically tractable'. The smallest possible semidefinite extension degree for a non-polyhedral cone is two. The following result shows that the semidefinite extension degree of $C_{n,2d}$ is that small, independently of the choice of $n$ and $d$. A well-known result of Ben-Tal and Nemirovski \cite[\S\S2.3.5]{MR1857264} shows that the hypograph of a weighted geometric-mean function with rational weights has a second-order cone extended formulation. Directly applying this fact to the explicit inequality description of the cones $\sxd(P_{n,A})=2$ occurring in the above description of $C_{n,2d}$ we obtain 

\begin{thm}
	\label{sxd:SONC}
	For all $n,d \in \N$, one has 
	$\sxd(C_{n,2d}) = 2$.
\end{thm}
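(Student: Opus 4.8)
The plan is to build a second-order cone extended formulation of $C_{n,2d}$ by assembling extended formulations of the individual summands $P_{n,A}$ with $A \in \cA_{n,2d}$, exploiting the fact that $C_{n,2d} = \sum_{A \in \cA_{n,2d}} P_{n,A}$ is a \emph{finite} sum. First I would recall that if convex cones $C_1,\ldots,C_N$ each have an $(L_3)^{m_i}$-lift, then their Minkowski sum $C_1 + \cdots + C_N$ has an $(L_3)^{m_1 + \cdots + m_N}$-lift: a point of the sum is witnessed by a tuple of points, one in each $C_i$, and the lifting spaces concatenate while the linear projection adds up the coordinates. Since $\cA_{n,2d}$ is finite, this reduces the problem to showing $\sxd(P_{n,A}) \le 2$ for each single $A \in \cA_{n,2d}$, and then Proposition~\ref{prop:soc} converts the resulting $(L_3)^m$-lift into the desired statement $\sxd(C_{n,2d}) \le 2$. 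The matching lower bound $\sxd(C_{n,2d}) \ge 2$ follows because $C_{n,2d}$ contains non-square-free polynomials and is not polyhedral (alternatively, it strictly contains a ray structure that no polyhedron has), so $\sxd(C_{n,2d}) = 2$.

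Next I would treat a single $A = \{\alpha(0),\ldots,\alpha(k),\beta\} \in \cA_{n,2d}$, where $\conv(A)$ is a $k$-simplex with even vertices $\alpha(j) \in (2\Z_+)^n$ and $\beta$ in the relative interior. Write $\beta = \sum_{j=0}^k \lambda_j \alpha(j)$ with rational barycentric coordinates $\lambda_j > 0$, $\sum_j \lambda_j = 1$. The key classical fact — the explicit description of $P_{n,A}$ coming from the weighted AM--GM inequality — says that a polynomial $f = \sum_{j=0}^k c_j \ux^{\alpha(j)} + b\, \ux^\beta$ is non-negative on $\R^n$ if and only if $c_0,\ldots,c_k \ge 0$ and the \emph{circuit number} condition $\prod_{j=0}^k (c_j/\lambda_j)^{\lambda_j} \ge |b|$ holds (with the appropriate sign bookkeeping when $\beta \notin (2\Z_+)^n$, which forces $b$ to be signed rather than just bounded in absolute value). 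Thus, in coefficient coordinates $(c_0,\ldots,c_k,b)$, the cone $P_{n,A}$ is, up to a linear change of coordinates and splitting $b = b^+ - b^-$ if needed, the set cut out by $c_j \ge 0$ and $\prod_j c_j^{\lambda_j} \ge |b|$, i.e.\ the hypograph of a weighted geometric mean with rational weights intersected with a halfspace/orthant.

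Then I would invoke the result of Ben-Tal and Nemirovski \cite[\S\S2.3.5]{MR1857264}: the hypograph $\{(c_0,\ldots,c_k,t) : c_j \ge 0,\ t \le \prod_j c_j^{\lambda_j}\}$ of a weighted geometric-mean function with rational exponents $\lambda_j$ admits a second-order cone extended formulation, obtained by a binary-tree decomposition into elementary inequalities of the form $u^2 \le vw$, each of which is a rotated second-order cone constraint of size $3$ (equivalently an $L_3$-constraint after an orthogonal change of variables). Counting, the number of such elementary $L_3$-blocks needed is polynomial in $k$ and in the bit-size of the denominators of the $\lambda_j$, hence finite, giving $\sxd(P_{n,A}) = 2$ for each $A$; the value cannot drop to $1$ since $P_{n,A}$ is non-polyhedral. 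Summing over the finitely many $A \in \cA_{n,2d}$ as above yields $\sxd(C_{n,2d}) \le 2$, and combined with the lower bound we conclude $\sxd(C_{n,2d}) = 2$.

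The main obstacle is not conceptual but bookkeeping: one must pin down precisely the inequality description of $P_{n,A}$ for every $A \in \cA_{n,2d}$ — in particular handling the case where the inner exponent $\beta$ is \emph{not} in $(2\Z_+)^n$, so that the relevant constraint is $\prod_j c_j^{\lambda_j} \ge |b|$ with a genuine absolute value (requiring a split of the variable $b$), versus the case $\beta \in (2\Z_+)^n$ where a one-sided bound suffices — and then verify that the Ben-Tal--Nemirovski construction applies verbatim after an affine identification of the coefficient space with $\R^{k+2}$. Care is also needed to confirm that the degree constraint $A \subseteq \{\alpha : |\alpha| \le 2d\}$ plays no role beyond guaranteeing $P_{n,A} \subseteq P_{n,2d}$, and that the finiteness of $\cA_{n,2d}$ (which is clear, since it is a finite collection of lattice configurations inside a bounded simplex) legitimizes the finite Minkowski-sum step.
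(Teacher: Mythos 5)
Your argument follows essentially the same route as the paper: decompose $C_{n,2d}=\sum_{A\in\cA_{n,2d}}P_{n,A}$ as a finite Minkowski sum, use the circuit-number/AM--GM description of each $P_{n,A}$ in coefficient coordinates, invoke the Ben-Tal--Nemirovski second-order cone representation of the weighted geometric mean with rational exponents (Lemma~\ref{lem:BT:N}), and combine via the sum lemma (Lemma~\ref{lem:sxd:sum} together with Proposition~\ref{prop:soc}). That part is sound; your closing paragraph even identifies correctly the one genuine bookkeeping point, namely that the paper needs the \emph{closed} description of $P_{n,A}$ (Lemma~\ref{lem:circuit}, obtained by an $\epsilon$-perturbation from the Iliman--de~Wolff description of $\tilde{P}_{n,A}$, which assumes strictly positive vertex coefficients), and that $\beta\notin(2\Z_+)^n$ gives the two-sided bound $|f_\beta|\le\Theta_f$ while $\beta\in(2\Z_+)^n$ gives only the one-sided bound --- your first mention of the sign bookkeeping has these two cases reversed, but you state it correctly later.

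The one real gap is the lower bound. ``$C_{n,2d}$ contains non-square-free polynomials and is not polyhedral'' and ``it strictly contains a ray structure that no polyhedron has'' are assertions, not arguments: since $\sxd(C)\le 1$ is exactly polyhedrality, you must actually exhibit why $C_{n,2d}$ is not a polyhedron. The paper does this by running Theorem~\ref{thm:tool} in the degenerate case $k=1$: take $S$ to be an arbitrarily large finite subset of the $x_1$-axis and, for each $s=(s_1,0,\ldots,0)\in S$, observe that $(x_1-s_1)^2$ is a non-negative circuit polynomial in $C_{n,2d}$ vanishing at exactly one point of $S$; hence $\sxd(C_{n,2d})>1$. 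Some one-line substitute along these lines is needed to make your lower bound complete.
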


By Proposition~\ref{prop:soc} and Theorem~\ref{sxd:SONC}, the \emph{SONC relaxation} 
\begin{equation}
	\label{sonc:rexation}
	\inf \setcond{\lambda \in \R}{f - \lambda \in C_{n,2d}}
\end{equation}
of the unconstrained polynomial optimization problem \eqref{upop} can be formulated as a second-order cone problem. While $\sxd(C_{n,2d})$ remains the same for all $n$ and $d$, the cone $C_{n,2d}$ does become more complex with the growth of $n$ and $d$. It would also be interesting to study $\sxc(C_{n,2d})$ and to determine the number of constraints needed in a second-order cone or semidefinite extended formulation of $C_{n,2d}$. Such studies would shed light on how to formulate \eqref{sonc:rexation} compactly in the paradigms of semidefinite and second-order programming. 

In \cite[Prop.~7.2]{MR3481195} it was shown that $C_{n,2d}$  and $\Sigma_{n,2d}$ are not comparable with respect to inclusion for $n, d \ge 2, \ (n,d) \ne (2,2)$. So, the cone $\Sigma_{n,2d} + C_{n,2d}$ is strictly larger than both $\Sigma_{n,2d}$ and $C_{n,2d}$ for these choices of $(n,d)$, which implies that the \emph{SOS+SONC relaxation} 
\begin{equation}
	\label{SOS+SONC:rel}
	\inf \setcond{\lambda \in \R}{f - \lambda \in \Sigma_{n,2d} + C_{n,2d}}
\end{equation}
of \eqref{upop} is a stronger relaxation than both the SOS relaxation and the SONC relaxation.

The following corollary addresses the question on the relation between $P_{n,2d}$ and $\Sigma_{n,2d}+C_{n,2d}$, which was formulated in the PhD thesis \cite[p.~134]{Dressler:Diss} of Mareike~Dressler and asked by Raman~Sanyal during the defense of her thesis. Above results imply that the cone $\Sigma_{n,2d}+C_{n,2d}$  always has a semidefinite extended formulation, while in view of Scheiderer's result, the cone $P_{n,2d}$ has a semidefinite extended formulation only in the cases of equality $\Sigma_{n,2d} = P_{n,2d}$, which was characterized by Hilbert. This yields 

\begin{cor}
	\label{cor:SOS+SONC}
	For all $n,d \in \N$, one has:
	\begin{align*}
		\Sigma_{n,2d} + C_{n,2d} & \ne P_{n,2d} & & \iff & & n,d \ge 2, \ (n,d) \ne (2,2).
	\end{align*}
\end{cor}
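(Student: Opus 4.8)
The plan is to prove Corollary~\ref{cor:SOS+SONC} by combining the results established earlier in the paper with Hilbert's and Scheiderer's theorems, treating the two directions of the equivalence separately.

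\textbf{The easy direction ($\Leftarrow$, i.e.\ $\Sigma_{n,2d}+C_{n,2d}\neq P_{n,2d}$ when $n,d\ge 2$ and $(n,d)\ne(2,2)$).} First I would recall that, by Theorem~\ref{sxd:SONC}, $\sxd(C_{n,2d})=2<\infty$, so $C_{n,2d}$ has a semidefinite extended formulation; and by Corollary~\ref{cor:sxd:sos}, $\sxd(\Sigma_{n,2d})=\binom{n+d}{n}<\infty$, so $\Sigma_{n,2d}$ too has a semidefinite extended formulation. Since the class of sets admitting a semidefinite extended formulation is closed under Minkowski sum --- concatenating the two $(\cS_+^k)^m$-lifts and composing the linear maps yields an $(\cS_+^{k})^{m+m'}$-lift of the sum after padding to a common matrix size --- the cone $\Sigma_{n,2d}+C_{n,2d}$ is semidefinitely representable, i.e.\ $\sxd(\Sigma_{n,2d}+C_{n,2d})<\infty$. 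On the other hand, Scheiderer's Theorem~\ref{thm:scheiderer} says that for $n,d\ge 2$ with $(n,d)\ne(2,2)$ the cone $P_{n,2d}$ has \emph{no} semidefinite extended formulation, hence $\sxd(P_{n,2d})=\infty$. A finite value cannot equal an infinite one, so $\Sigma_{n,2d}+C_{n,2d}\ne P_{n,2d}$ in this regime.

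\textbf{The reverse direction ($\Rightarrow$), equivalently the contrapositive: when $n=1$, or $d=1$, or $(n,d)=(2,2)$, one has $\Sigma_{n,2d}+C_{n,2d}=P_{n,2d}$.} Here I would use Hilbert's Theorem~\ref{thm:hilbert}, which gives exactly $\Sigma_{n,2d}=P_{n,2d}$ in precisely these cases. Since $C_{n,2d}\subseteq P_{n,2d}$ (as noted in the paragraph preceding Theorem~\ref{sxd:SONC}, because $P_{n,A}\subseteq P_{n,2d}$ for every $A\in\cA_{n,2d}$) and both $\Sigma_{n,2d}$ and $P_{n,2d}$ are convex cones, the containments
\[
P_{n,2d}=\Sigma_{n,2d}\subseteq \Sigma_{n,2d}+C_{n,2d}\subseteq P_{n,2d}+P_{n,2d}=P_{n,2d}
\]
pin the sum exactly at $P_{n,2d}$. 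This settles the contrapositive and hence the forward implication.

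\textbf{Main obstacle.} There is essentially no hard step: the corollary is a clean bookkeeping assembly of Theorem~\ref{sxd:SONC}, Corollary~\ref{cor:sxd:sos}, Theorem~\ref{thm:hilbert}, and Theorem~\ref{thm:scheiderer}. The only point requiring a line of care is the closure of semidefinite representability under Minkowski addition used in the easy direction --- one should note explicitly that if $S_1=\pi_1((\cS_+^{k})^{m_1}\cap H_1)$ and $S_2=\pi_2((\cS_+^{k})^{m_2}\cap H_2)$ then $S_1+S_2=(\pi_1\oplus\pi_2)((\cS_+^{k})^{m_1+m_2}\cap (H_1\times H_2))$ composed with the addition map, and that blocks of differing sizes can be brought to a common size $k$ by embedding $\cS_+^{k'}$ as a principal submatrix block inside $\cS_+^{k}$ for $k'\le k$ --- but this is standard and follows directly from the framework of \cite{GPT:2013} recalled in the introduction. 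Accordingly I would keep the proof to a short paragraph citing these four results plus that one routine observation.
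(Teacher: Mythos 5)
Your proposal is correct and follows essentially the same route as the paper: Hilbert's theorem settles the cases $n=1$, $d=1$, or $(n,d)=(2,2)$ via $\Sigma_{n,2d}=P_{n,2d}$ and $C_{n,2d}\subseteq P_{n,2d}$, while the other direction contrasts the finite semidefinite extension degree of $\Sigma_{n,2d}+C_{n,2d}$ (the paper cites Lemma~\ref{lem:sxd:sum} for closure under Minkowski sums, which is exactly the observation you spell out by hand) with Scheiderer's non-representability of $P_{n,2d}$.
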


Corollary~\ref{cor:SOS+SONC} shows that, in general,  \eqref{SOS+SONC:rel} is not equivalent to the original problem \eqref{upop}.

\section{Background material}

\label{sect:background}

\subsection{Basic notation and terminology}

Let $\N:=\{1,2,3,\ldots\}$ be the set of all positive integers. We use $\Z_+$ resp. $\R_+$ to denote the set of non-negative integer resp. real values. Let $[k]:=\{1,\ldots,k\}$ for $k \in \N$ and $[k]:=\emptyset$ for $k=0$. The cardinality of a set $X$ is denoted by $|X|$. Given a set $X$ and a non-negative integer $k \ge 0$, we denote by $\binom{X}{k}$ the set of all $k$-element subsets of $X$. If $X_1,\ldots,X_r$ are finitely many sets in a vector space, the  sum of $X_1,\ldots,X_r$ is introduced as 
\[
	X_1 + \cdots  + X_r := \setcond{u_1 + \cdots + u_r}{u_1 \in X_1,\ldots,u_r \in X_r}.
\] 
If $g  \in \R$ and $C \subseteq \R^n$ or $g \in \R[\ux]$ and $C \subseteq \R[\ux]$, we use the notation 
\[
	g C := \setcond{g p}{p \in C}. 
\]

If $A$ is a matrix, then $A^\top$ denotes the transpose of $A$. Vectors are interpreted as columns in matrix expressions. The image of a matrix $A \in \R^{m  \times n}$ is $\im(A) := \setcond{A x }{x \in \R^n}$.

\subsection{Euclidean spaces and convex sets}

We endow the space $\R^n$ with the standard scalar product $\sprod{x}{y}:=x^\top y$. Two linear subspaces $X$ of $Y$ of $\R^n$ are said to be \emph{orthogonal} if $\sprod{x}{y}=0$ holds for all $x \in X$ and $y \in Y$. In the space $\cS^k$ of $k \times k$ symmetric matrices over $\R$ we introduce the scalar product of $A= (a_{ij})_{i,j \in [k]}$ and $B = (b_{ij})_{i,j \in [k]}$ by 
\[
	\sprod{A}{B} := 
\sum_{i,j \in [k]} a_{ij} b_{ij}.
\]
The set $\cS_+^k$ is the convex closed cone of positive semidefinite matrices in $\cS^k$.
For $x,y \in \R^k$ the rank-one symmetric matrices $x x^\top, y y^\top \in \cS_+^k$ satisfy the relation 
\begin{equation}
	\label{sprod:rank:1}
	\sprod{x x^\top}{y y^\top}  = \sprod{x}{y}^2.
\end{equation}

We endow the space $(\cS^k)^m$ of $m$-tuples of $k \times k$ symmetric matrices  with the scalar product 
\[
	\sprod{A}{B} := \sum_{i=1}^m \sprod{A_i}{B_i}
\]
for $A=(A_1,\ldots,A_m), B=(B_1,\ldots,B_m) \in (\cS^k)^m$. 

By $\cone$ we denote the convex conic hull and by $\cl$ the topological closure with respect to the Euclidean topology.
For a non-empty set $X \subseteq \R^n$, we define the \emph{conic dual} \cite{Rockafellar:1997} of $X$ by 
\[
	X^\ast := \setcond{y}{\sprod{x}{y} \ge 0 \ \text{for all} \ x \in X}.
\]
It is well-known that 
\begin{equation}
	\label{double:dual}
	(X^\ast)^\ast = \cl(\cone(X))
\end{equation}
holds for all $X$ with $\emptyset \ne X \subseteq \R^n$. 
The conic dual is introduced in the same way in an arbitrary Euclidean space, in particular, in $(\cS^k)^m$. It is known that $\cS_+^k$ is self-dual, that is, $(\cS_+^k)^\ast = \cS_+^k$ \cite[Thm.~A.7.6]{MR1857264}. This implies that $(\cS_+^k)^m$ is self-dual, too. 

We call a convex cone $C$ in $\R^n$ \emph{pointed} if there exists $u \in \R^n \setminus \{0\}$ with $\sprod{u}{x} \ge 0$ for all $x \in C$ and such that $\setcond{x \in C}{\sprod{u}{x}=0} = \{0\}$. If $C$ is pointed, and $u$ a vector as above, then $\setcond{x \in C}{\sprod{u}{x}=1}$ is a bounded affine slice of $C$. 
The cones $\Sigma_{n,2d}, P_{n,2d}$ and $P_{n,2d}(X)$, with $X \subseteq \R^n$ having non-empty interior, are known to be pointed, closed and full-dimensional within the vector space $\R[\ux]_{2d}$. 

\subsection{Tools}

The following result is contained, albeit in somewhat different wording, in \cite{GPT:2013}. 

\begin{thm}[Gouveia et al. \cite{GPT:2013}]
	\label{gpt:thm}
	Let $C \subseteq \R^n$ be a closed convex cone and let $K=(\cS_+^k)^m$. Then the following conditions are equivalent: 
	\begin{enumerate}[(i)]
		\item \label{K:lift} $C$ has a $K$-lift.
		\item \label{factorization:general} 
		For every $x \in C$ there exist $A_x \in K$  and for every $y \in C^\ast$ there exists $B_y \in K$ such that the equality
		\[
			\sprod{x}{y} = \sprod{A_x}{B_y}
		\] holds for all $x \in C$ and $y \in C^\ast$. 
	\end{enumerate}
\end{thm}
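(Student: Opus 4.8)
The statement is the cone-factorization theorem of Gouveia et al.\ specialized to the self-dual cone $K=(\cS_+^k)^m$, and throughout I would exploit the self-duality $K^\ast=K$ recorded above. Write $V:=(\cS^k)^m$ for the ambient Euclidean space of $K$, so that a $K$-lift means $C=\pi(K\cap H)$ with $\pi\colon V\to\R^n$ linear and $H\subseteq V$ affine; let $\pi^\ast\colon\R^n\to V$ denote the adjoint of $\pi$. The plan is to prove the two implications separately, reducing everything to conic duality together with the identity \eqref{double:dual}.

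For \eqref{K:lift}$\Rightarrow$\eqref{factorization:general} I would argue as follows. The primal factors are immediate: for each $x\in C$ the definition of a lift supplies $A_x\in K\cap H$ with $\pi(A_x)=x$. For the dual factors fix $y\in C^\ast$. Since $\sprod{\pi(a)}{y}=\sprod{a}{\pi^\ast(y)}$ and $\pi(a)\in C$ for every $a\in K\cap H$, the functional $\sprod{\,\cdot\,}{\pi^\ast(y)}$ is nonnegative on $K\cap H$. The goal is a dual factor $B_y\in K=K^\ast$ with $\sprod{A_x}{B_y}=\sprod{x}{y}$ for all $x$; since $\sprod{x}{y}=\sprod{A_x}{\pi^\ast(y)}$, this amounts to correcting $\pi^\ast(y)$ by a vector orthogonal to every lift so that it lands in $K$. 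Assuming (as one may for a cone $C$) that $H$ is a linear subspace $L$, I would reduce the claim to the membership $(\pi^\ast(y)+L^\perp)\cap K\neq\emptyset$, i.e.\ $\pi^\ast(y)\in K+L^\perp$. The nonnegativity just noted gives $\pi^\ast(y)\in(K\cap L)^\ast$, and then the conic-duality identity $(K\cap L)^\ast=\cl(K^\ast+L^\perp)=\cl(K+L^\perp)$ places $\pi^\ast(y)$ in the \emph{closure} of the target set; any $B_y$ in the intersection then satisfies $\sprod{A_x}{B_y}=\sprod{A_x}{\pi^\ast(y)}=\sprod{x}{y}$, as required.

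The converse \eqref{factorization:general}$\Rightarrow$\eqref{K:lift} is the clean, constructive direction, and I would build the lift directly from the dual factors. Assuming $C$ pointed (so $C^\ast$ spans $\R^n$), pick a basis $y_1,\ldots,y_n$ of $\R^n$ drawn from $C^\ast$ and define $\pi\colon V\to\R^n$ by declaring $\sprod{\pi(Y)}{y_i}:=\sprod{Y}{B_{y_i}}$. Then set $H:=\setcond{Y\in V}{\sprod{Y}{B_y}=\sprod{\pi(Y)}{y}\ \text{for all}\ y\in C^\ast}$, a linear subspace. The factorization identity shows $\pi(A_x)=x$ and $A_x\in K\cap H$ for every $x\in C$, giving $C\subseteq\pi(K\cap H)$. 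For the reverse inclusion, any $Y\in K\cap H$ yields $x:=\pi(Y)$ with $\sprod{x}{y}=\sprod{Y}{B_y}\ge 0$ for all $y\in C^\ast$ — here using $Y,B_y\in K=K^\ast$ — whence $x\in(C^\ast)^\ast=C$ by \eqref{double:dual}. Thus $\pi(K\cap H)=C$, exhibiting the desired $K$-lift. (If $C$ is not pointed, I would split off its lineality space and apply the above to the pointed quotient.)

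The genuinely delicate point is the closure appearing in the first implication: the identity $(K\cap L)^\ast=\cl(K+L^\perp)$ only guarantees $\pi^\ast(y)\in\cl(K+L^\perp)$, and this must be upgraded to $\pi^\ast(y)\in K+L^\perp$. This is the main obstacle I anticipate, and it is resolved precisely by the good structure of $K=(\cS_+^k)^m$: the positive semidefinite cone is facially exposed (``nice''), which forces the relevant Minkowski sum $K+L^\perp$ to be closed, and in the paper's applications the cone $C$ is full-dimensional, providing the Slater-type regularity that makes the strong conic duality exact. It is worth noting that it is exactly this direction \eqref{K:lift}$\Rightarrow$\eqref{factorization:general} that is used later, via contraposition, to derive the lower bounds of Theorem~\ref{thm:tool}: nonexistence of a factorization rules out any $K$-lift. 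Apart from the closure bookkeeping, the whole argument is a transparent translation between the geometric lift and the algebraic factorization, driven entirely by the self-duality $K^\ast=K$ and by \eqref{double:dual}.
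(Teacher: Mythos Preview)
The paper does not give a self-contained proof of this theorem: it cites Gouveia--Parrilo--Thomas (Remark~2.3, Theorem~2.4, Corollary~2.6 in \cite{GPT:2013}) for the pointed full-dimensional case, observes that $K=(\cS_+^k)^m$ is \emph{nice} so that Corollary~2.6 applies, and then reduces an arbitrary closed convex cone to the pointed full-dimensional case by writing $C=C_0\times\R^s\times\{0\}^t$ and matching up both sides of the equivalence. Your proposal instead sketches the GPT argument itself, which is more ambitious than what the paper does; your \eqref{factorization:general}$\Rightarrow$\eqref{K:lift} direction is the standard construction and is fine, and your reduction of the non-pointed case via the lineality space is the same idea the paper uses.

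There is, however, a genuine imprecision in your \eqref{K:lift}$\Rightarrow$\eqref{factorization:general} sketch. You assert that niceness of $K$ ``forces the relevant Minkowski sum $K+L^\perp$ to be closed''. That is not what niceness gives: by definition, $K$ is nice when $K^\ast+F^\perp$ is closed for every \emph{face} $F$ of $K$, and the linear subspace $L$ in a lift is typically not a face. The way niceness actually removes the closure in GPT is by facial reduction: if $K\cap L$ misses $\operatorname{relint}(K)$, one passes to the minimal face $F$ of $K$ containing $K\cap L$, uses that $F$ is itself (a product of smaller PSD cones, hence) nice, and iterates until the lift becomes \emph{proper}, at which point strong conic duality gives $(K\cap L)^\ast=K^\ast+L^\perp$ without closure. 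Your parenthetical ``Slater-type regularity'' hints at this but conflates two separate mechanisms. A second, smaller point: the reduction ``$H$ may be taken linear because $C$ is a cone'' is not automatic and also deserves a sentence; the clean route is again homogenization or the observation that $C=\pi(K\cap\lambda H)$ for every $\lambda>0$, followed by passing to the linear span.
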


\begin{rem}
	One can rephrase the results about $K$-lifts of $n$-dimensional compact convex sets from \cite{GPT:2013} as results about $K$-lifts of $n$-dimensional pointed closed convex cones. With this interpretation, it can be seen that Theorem~\ref{gpt:thm} in the case of $n$-dimensional pointed convex cones is covered by Remark~2.3,  Theorem~2.4 and Corollary~2.6 in \cite{GPT:2013}. Note also that Corollary~2.6 is about $K$-lifts of so-called \emph{nice} cones $K$. The cone $K= (\cS_+^k)^m$  in Theorem~\ref{gpt:thm} is nice, because  $\cS_+^k$ is known to be a nice cone; see the comment following Corollary~2.6 in \cite{GPT:2013}.
	
	We also sketch how to deduce the assertion of Theorem~\ref{gpt:thm} for general convex cones from the case of $n$-dimensional pointed convex cones. If $C$ is not full-dimensional or not pointed, then passing to appropriate coordinates, we can assume $C = C_0 \times \R^s \times \{0\}^t$, where the cone $C_0$ is $(n-s-t)$-dimensional and pointed. It is not hard to see that $C$ has a $K$-lift if and only if $C_0$ has a $K$-lift. One has $C^\ast = C_0^\ast \times \{0\}^s \times \R^{t}$. Thus, each scalar product $\sprod{x}{y}$ with $x \in C$ and $y \in C^\ast$, occurring in \eqref{factorization:general}, can be written as $\sprod{x}{y}=\sprod{x_0}{y_0}$ where $x_0 \in C_0$ is a vector of the first $n-s-t$ components of $x$ and $y_0 \in C_0^\ast$ is a vector of the first $n-s-t$ components of $y$. Conversely, each $x_0 \in C_0$ and $y_0 \in C_0$ yield vectors $x \in C$ and $y \in C^\ast$ with $\sprod{x_0}{y_0} = \sprod{x}{y}$. This shows that condition \eqref{factorization:general} holds for the cone $C$ if and only if it holds for the cone $C_0$. Thus, equivalence \eqref{K:lift} $\iff$ \eqref{factorization:general} for the general cone $C$ is derived from the same equivalence for the cone $C_0$. 
\end{rem}

\begin{rem}
	Equalities \eqref{inv:dual:sxc} and \eqref{inv:dual:sxd} from the introduction follow from Theorem~\ref{gpt:thm}.
\end{rem}

Ramsey's theory is another powerful tool that we use in our proofs. Quoting T.~S.~Motzkin, one can describe Ramsey type results as assertions that the \emph{``complete disorder is impossible''} \cite[Ch.~2]{Graham:Rothshild:Spencer:1990:Ramsey}. The most well-known version of Ramsey-type theorems is concerned with edge colorings of complete graphs. It can be illustrated by the following example. If we color each edge of the complete graph on six nodes with one of the two given colors, then -- no matter how we choose the coloring -- our colored graph will always contain a monochromatic triangle. This observation is a special case of the Ramsey theorem for graphs: if, for given $c \in \N$ and $n \in \N$, the edges of the complete graph on $N$ nodes are colored with $c$ colors then the graph will contain a complete monochromatic subgraph on $n$ nodes whenever $N$ is large enough. For the above example, the `input' of the Ramsey theorem is the number $c=2$ of colors and the size $n=3$ of the ordered substructure that we want to discover in the overall structure, while $N=6$ is a possible `output', giving the size of the overall structure sufficient for guaranteeing the existence of an ordered substructure of the desired size. In our arguments, we will need the Ramsey theorem for hypergraphs, which is a natural generalization of Ramsey's theorem for graphs from the case of the complete graph $\binom{[N]}{2}$ to the case of the complete $k$-uniform hyperpgraph $\binom{[N]}{k}$:


 	\begin{thm}[Ramsey's theorem for hypergraphs \cite{Graham:Rothshild:Spencer:1990:Ramsey}]
 		\label{thm:ramsey}
 		Let $k, n, c \in \N$. Then there  exists $N \in \N$ with the following property: for every map $F: \binom{[N]}{k} \to [c]$  there exists a subset $W$ of $[N]$ of cardinality $n$ such that $F$ is constant on $\binom{W}{k}$. 
 	\end{thm}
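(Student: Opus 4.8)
The plan is to prove the statement by induction on the uniformity parameter $k$, keeping $n$ and $c$ free to vary throughout the induction so that the inductive hypothesis may be applied with auxiliary values. For the base case $k=1$, a map $F\colon \binom{[N]}{1} \to [c]$ is nothing but a $c$-coloring of the $N$ singletons, and the pigeonhole principle shows that if $N \ge c(n-1)+1$ then some color class contains at least $n$ singletons; the corresponding $n$-element set $W$ does the job. At this point I would record the quantitative reading of the statement: for fixed $k,n,c$ let $R_k(n;c)$ denote the least admissible $N$, so that the base case gives $R_1(n;c) \le c(n-1)+1$, and the aim of the inductive step is to bound $R_k(n;c)$ in terms of the numbers $R_{k-1}(\,\cdot\,;c)$.

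For the inductive step, assume the result for $(k-1)$-uniform colorings and let $F\colon \binom{[N]}{k} \to [c]$ be given. The key construction is a greedy extraction of a sequence of elements $a_1, a_2, \ldots$ together with a nested chain $V_0 \supseteq V_1 \supseteq \cdots$ of subsets of $[N]$. Start with $V_0 := [N]$. Having obtained $V_{i-1}$, pick any $a_i \in V_{i-1}$ and define an auxiliary coloring $F_i\colon \binom{V_{i-1}\setminus\{a_i\}}{k-1} \to [c]$ of the $(k-1)$-subsets of the remaining elements by $F_i(S) := F(\{a_i\}\cup S)$. By the inductive hypothesis, provided $V_{i-1}\setminus\{a_i\}$ is large enough, there is a subset $V_i \subseteq V_{i-1}\setminus\{a_i\}$ of a prescribed size on which $F_i$ is constant; call its constant value $c_i \in [c]$. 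By construction every element selected after $a_i$ lies in $V_i$, so for \emph{any} $(k-1)$-subset $S$ of $\{a_{i+1},a_{i+2},\ldots\}$ one has $F(\{a_i\}\cup S)=c_i$; in words, the color of a $k$-set is determined by the color $c_i$ attached to its \emph{smallest} element $a_i$.

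To conclude, I would run this construction until it produces $m := c(n-1)+1$ elements $a_1,\ldots,a_m$ with well-defined colors $c_1,\ldots,c_m \in [c]$. Applying pigeonhole to this sequence of colors yields indices $i_1 < \cdots < i_n$ with $c_{i_1}=\cdots=c_{i_n}=:\gamma$, and I claim $W := \{a_{i_1},\ldots,a_{i_n}\}$ is monochromatic of color $\gamma$. Indeed, any $k$-subset of $W$ has some smallest element $a_{i_j}$, and its remaining $k-1$ elements all lie in $V_{i_j}$, so its color equals $c_{i_j}=\gamma$. The remaining point is purely quantitative: to guarantee the construction does not stall, I would fix the required sizes backwards, setting $s_m := k-1$ and $s_{i-1} := R_{k-1}(s_i;c)+1$, so that $|V_{i-1}|\ge s_{i-1}$ forces $|V_i|\ge s_i$ after removing $a_i$ and applying the inductive bound; then $N := s_0$ suffices and is finite, being obtained by finitely many applications of the finite numbers $R_{k-1}(\,\cdot\,;c)$.

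The conceptual heart, and the step I expect to demand the most care, is the inductive construction of the nested sets together with the observation that the color of a $k$-set depends only on its minimal element: this is precisely what collapses the $k$-uniform problem to a $1$-uniform (pigeonhole) problem on the sequence of indices. The base case and the final pigeonhole are immediate, and the size bookkeeping, while it must be arranged so the recursion terminates, is routine; the only genuine subtlety there is to keep each $V_i$ of size at least $k-1$ so that the auxiliary colors $c_i$ are actually defined, which the backward choice $s_m=k-1$ ensures.
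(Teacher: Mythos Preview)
Your argument is the standard, correct proof of hypergraph Ramsey by induction on the uniformity $k$: the base case is pigeonhole, and in the inductive step you greedily extract a sequence of pivots $a_i$ together with nested sets $V_i$ on which the link coloring $S\mapsto F(\{a_i\}\cup S)$ is constant, so that the color of any $k$-set from the extracted sequence depends only on its minimal pivot; a final pigeonhole on the pivot colors then yields the monochromatic $W$. The backward size bookkeeping with $s_m=k-1$ and $s_{i-1}=R_{k-1}(s_i;c)+1$ is exactly what is needed to keep the construction running and the colors $c_i$ well-defined.

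As for comparison with the paper: there is nothing to compare. The paper does not prove Theorem~\ref{thm:ramsey}; it is quoted as a classical result from \cite{Graham:Rothshild:Spencer:1990:Ramsey} and used as a black box (only the existence of the finite number $R_k(k+1;(k+1)^m)$ is needed, in the proof of Lemma~\ref{lem:key}). Your write-up therefore supplies a proof where the paper deliberately appeals to the literature, and the argument you give is essentially the one found in that reference.
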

 
	Theorem~\ref{thm:ramsey} is a generalization of Ramsey's theorem for graphs from $k=2$ to an arbitrary $k$.
 	We denote the minimal $N$ in Theorem~\ref{thm:ramsey} by $R_k(n;c)$. The value $R_k(n;c)$ is the so-called \emph{Ramsey number} for $k$-uniform hypergraphs. In the context of Theorem~\ref{thm:ramsey}, $F(T)$ is usually called the \emph{color} assigned to $T \in \binom{[N]}{k}$.


 \section{Proof of Theorem~\ref{thm:tool}}
 
 \label{sect:proof:main}

 We give an outline of the proof of Theorem~\ref{thm:tool}. In what follows, we will use
 
  \begin{rem}
  	\label{rem:identificiation}
  	The vector spaces $\R[\ux]_{d}$ and $\R^{\binom{n+d}{n}}$ can be identified via the identification of $f = \sum_{|\alpha| \le d} f_\alpha \ux^\alpha$ with the vector $(f_\alpha)_{|\alpha| \le d}$. This allows us to write the evaluation $f(x)$ of $f$ at $x \in \R^n$ as the scalar product $f(x) = \sprod{f}{v_{n,d}(x)}$, using the vector $v_{n,d}$ of monomials defined by \eqref{vec:mon}, and to consider dual cones $C^\ast \subseteq \R^{\binom{n+d}{n}}$ of closed convex cones $C \subseteq \R[\ux]_{d}$. 
  \end{rem}

To highlight the relevant combinatorial convex geometry exploited in the proof of Theorem~\ref{thm:tool}, we introduce the following notion. We call a subset $S$ of a $n$-dimensional closed convex set $C \subseteq \R^n$ a \emph{$k$-neighborly configuration} in $C$ if, for each $k$-element subset $T$ of $S$, there exists a supporting hyperplane $H$ of $C$ satisfying $H \cap S = T$. This notion is strongly related to the well-known notion of $k$-neighborly polytope from polyhedral combinatorics. We recall that a \emph{$k$-neighborly polytope} is a polytope with every set of $k$ or fewer vertices forming a face \cite{Ziegler:Polytopes} . 
  
  Remark~\ref{rem:identificiation} implies that for the cone $C \subseteq P_{n,2d}(X)$, the vector $v_{n,2d}(s)$ belongs to the dual cone $C^\ast$ for every $s \in X$. Consequently, if $C$ had a $(\cS_+^k)^m$-lift, then by Theorem~\ref{gpt:thm}, one would have $f(s)= \sprod{f}{v_{n,2d}(s)} = \sprod{A_f}{B_t}$ for all $f \in C$ and all $s \in X$ with appropriate choices of $A_f \in (\cS_+^k)^m, B_s \in (\cS_+^k)^m$. Our proof uses the zero-pattern in the decomposition $f(s) = \sprod{A_f}{B_s}$. This means that we only make use of the distinction between $f(s)=0$ and $f(s)>0$. A set $S \subseteq X$ satisfying $(\ast)$ gives rise to polynomials $f_T \in C$ with $T \in \binom{S}{k}$ that satisfy $f_T(s)=0$ for $s \in T$ and $f_T(s) >0$ for $ s\in S \setminus T$. Interpreting $f_T(s)$ as $\sprod{f_T}{v_{n,2d}(s)}$, the latter implies that set $\setcond{v_{n,2d}(s)}{s \in S}$ is a $k$-neighborly point configuration of the cone $C^\ast$.
  
  The crucial step of the proof is to show that, if $C$ has a $(\cS_+^k)^m$-lift, then $C^\ast$ cannot contain a $k$-neighborly configuration of an arbitrarily large cardinality. The proof does not explicitly use the notion of $k$-neighborly configuration, but this notation allows to `visualize' what is happening in convex-geometric terms.  The derivation of an upper bound on the size of a $k$-neighborly configuration is carried out in Lemma~\ref{lem:key} in a somewhat more abstract setting that uses only $A_T:=A_{f_T}$ for $T \in \binom{S}{k}$ and $B_t$ with $t \in S$ and  does not explicitly involve the cone $C$. Note that for deciding whether $\sprod{A_T}{B_t}=0$ is fulfilled, one does not need the whole knowledge of $A_T$ and $B_t$. If $A_T = (A_{T,1},\ldots,A_{T,m})$ and $B_t=(B_{t,1},\ldots,B_{t,m})$, then it is easy to check that condition $\sprod{A_T}{B_t} =0$ holds if and only if $\im(A_{T,i})$ is orthogonal to the image of $\im(B_{t,i})$ for every $i \in [m]$ (see Lemma~\ref{lem:images}). Thus, the only information we need about $A_{T,i}$ and $B_{t,i}$ is the knowledge of their images. 

  The condition $f_T(s)=0$ implies that $\im(A_{T,i})$ is orthogonal to the space $U_{T,i} :=\sum_{t \in T} \im(B_{t,i})$. Thus, $U_{T,i}$ is an obstruction to the choice of $\im(A_{f,i})$. The tuple  $(U_{T,1},\ldots,U_{T,m})$ of $m$ vector spaces is a label of the hyperedge $T \in \binom{S}{k}$ in the hypergraph $\binom{S}{k}$. As we do not have any particular information about the labeling $T \in \binom{S}{k} \mapsto (U_{T,1},\ldots,U_{T,m})$, it `looks' like a completely unordered structure to us. The lack of the order can be eliminated by applying Ramsey's theorem and passing to an ordered substructure. Using Ramsey's theorem for hypergraphs (Theorem~\ref{thm:ramsey}), we are able to show that, if $S$ is large enough, then within some smaller hyperpgraph $\binom{W}{k}$ on a set $W \subseteq S$ of $k+1$ nodes,  the hyperedge $T$  has the same label $(U_1,\ldots,U_m)$ for every $T \in \binom{W}{k}$. Once the existence of $W$ as above is established, one can easily derive a contradiction and conclude the proof. 
 
 We now proceed with detailed arguments.
 
 \begin{lem}
 	\label{lem:sum:v:sp}
 	Let $U_1,\ldots,U_n$ be linear subspaces of $\R^k$ and let $U:= \sum_{i=1}^n U_i$. Then there exists a subset $I \subseteq [n]$ with $|I| \le k$ such that $U = \sum_{i \in I} U_i$. 
 \end{lem}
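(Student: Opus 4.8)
The plan is to build the index set $I$ greedily, adding one index at a time and tracking the dimension of the partial sum $\sum_{i \in I} U_i$. The key observation is that as long as the partial sum is a proper subspace of $U$, we can always enlarge it \emph{and strictly increase its dimension} by absorbing one more of the $U_i$; since dimensions in $\R^k$ are bounded by $k$, this can happen at most $k$ times.

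\textbf{Key steps.} First I would set $I_0 := \emptyset$ and $V_0 := \{0\}$, and proceed inductively. Suppose $I_j \subseteq [n]$ with $|I_j| = j$ and $V_j := \sum_{i \in I_j} U_i$ have been constructed. If $V_j = U$, we are done, because $|I_j| = j \le k$ will follow from the termination bound. If $V_j \ne U$, I claim there exists an index $\ell \in [n] \setminus I_j$ with $U_\ell \not\subseteq V_j$: indeed, if $U_i \subseteq V_j$ held for every $i \in [n]$ (in particular for every $i \notin I_j$), then $U = \sum_{i=1}^n U_i \subseteq V_j$, contradicting $V_j \ne U$. Pick such an $\ell$, set $I_{j+1} := I_j \cup \{\ell\}$ and $V_{j+1} := V_j + U_\ell$. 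Since $U_\ell$ contains a vector outside $V_j$, we have $V_j \subsetneq V_{j+1}$, hence $\dim V_{j+1} \ge \dim V_j + 1$.

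\textbf{Termination and conclusion.} Because $0 \le \dim V_j \le k$ for all $j$ and $\dim V_j$ strictly increases with each step, the process must halt after at most $k$ steps, i.e.\ there is some $j \le k$ with $V_j = U$. Taking $I := I_j$ gives $|I| = j \le k$ and $\sum_{i \in I} U_i = V_j = U$, as required.

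\textbf{Main obstacle.} There is essentially no serious obstacle here; the only point requiring a moment's care is the existence of the enlarging index $\ell$, which is handled by the contrapositive argument above (if no such $\ell$ existed, every $U_i$ would already lie in $V_j$, forcing $U = V_j$). The dimension bound then makes termination automatic. One may alternatively phrase the whole argument as choosing $I$ to be a subset of $[n]$ of minimal cardinality with $\sum_{i \in I} U_i = U$ and observing that minimality forces $|I| \le k$ via the same strict-dimension-increase reasoning, but the explicit greedy construction is the cleanest to write out.
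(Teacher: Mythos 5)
Your greedy construction is correct and rests on the same dimension bound $\dim U \le k$ that drives the paper's own argument; the paper simply extracts a basis of $U$ from $U_1 \cup \cdots \cup U_n$ in one step and takes $I$ to be the indices of the subspaces supplying those basis vectors, whereas you build the same index set incrementally. The two proofs are essentially identical in substance.
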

 \begin{proof}
 	Pick a basis $b_1,\ldots,b_m$ of $U$ from the set $U_1 \cup \ldots \cup U_k$ and then fix for each $i \in [m]$ an index $j_i \in [n]$ with $b_i \in U_{j_i}$. It is clear that the assertion holds for $I= \{j_1,\ldots,j_m \}$.
 \end{proof}

 \begin{lem}
 	\label{lem:images}
 	Let $A, B \in \cS_+^k$. Then $\sprod{A}{B} = 0$ holds if and only if $\im(A)$ is orthogonal to $\im(B)$. 
 \end{lem}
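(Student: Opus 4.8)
The plan is to prove both directions of the equivalence using the spectral decomposition of positive semidefinite matrices, reducing everything to the rank-one identity $\sprod{xx^\top}{yy^\top} = \sprod{x}{y}^2$ recorded in \eqref{sprod:rank:1}.

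First I would write $A = \sum_{i=1}^r \lambda_i u_i u_i^\top$ and $B = \sum_{j=1}^s \mu_j w_j w_j^\top$ with $\lambda_i, \mu_j > 0$, where $u_1,\ldots,u_r$ and $w_1,\ldots,w_s$ are orthonormal systems spanning $\im(A)$ and $\im(B)$ respectively. By bilinearity of the scalar product and \eqref{sprod:rank:1}, one gets
\[
	\sprod{A}{B} = \sum_{i=1}^r \sum_{j=1}^s \lambda_i \mu_j \sprod{u_i u_i^\top}{w_j w_j^\top} = \sum_{i=1}^r \sum_{j=1}^s \lambda_i \mu_j \sprod{u_i}{w_j}^2.
\]
Every term in this double sum is non-negative, since the $\lambda_i$ and $\mu_j$ are positive and the squares are non-negative.

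For the direction ``$\Leftarrow$'': if $\im(A) \perp \im(B)$, then $\sprod{u_i}{w_j} = 0$ for all $i$ and $j$, so every term vanishes and $\sprod{A}{B} = 0$. For the direction ``$\Rightarrow$'': if $\sprod{A}{B} = 0$, then since the sum of non-negative terms is zero, every term is zero; as $\lambda_i \mu_j > 0$, this forces $\sprod{u_i}{w_j} = 0$ for all $i,j$. Since the $u_i$ span $\im(A)$ and the $w_j$ span $\im(B)$, bilinearity gives $\sprod{x}{y} = 0$ for all $x \in \im(A)$, $y \in \im(B)$, i.e.\ $\im(A) \perp \im(B)$.

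I do not expect any real obstacle here; the only point requiring a moment's care is the reduction from orthogonality of spanning vectors to orthogonality of the spanned subspaces, which is immediate from bilinearity. An alternative, perhaps cleaner, route is to observe that $\sprod{A}{B} = \operatorname{tr}(AB)$ and to factor $A = PP^\top$, $B = QQ^\top$, so that $\operatorname{tr}(AB) = \operatorname{tr}(P^\top Q Q^\top P) = \|Q^\top P\|_F^2$, which vanishes if and only if $Q^\top P = 0$, i.e.\ $\im(P) = \im(A)$ is orthogonal to $\im(Q) = \im(B)$; I would likely present whichever of the two is shorter given the notation already set up in the paper.
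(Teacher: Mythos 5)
Your proof is correct and follows essentially the same route as the paper: spectral decomposition of $A$ and $B$ into positive-weighted rank-one terms, the identity \eqref{sprod:rank:1}, and the observation that a sum of non-negative terms vanishes iff each term does. The alternative trace/factorization argument you sketch is also valid but is not the one the paper uses.
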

 \begin{proof}
 	Let $a_1,\ldots,a_k$ be a basis of $\R^k$ consisting of eigenvectors of $A$ corresponding to the eigenvalues $\lambda_1 \ge \ldots \ge \lambda_r > \lambda_{r+1} = \ldots = \lambda_k = 0$. Analogously, let $b_1,\ldots,b_k$ be a basis of $\R^k$ consisting of eigenvectors of $B$ corresponding to the eigenvalues $\mu_1 \ge \ldots \ge \mu_s > \mu_{s+1} = \ldots = \mu_k=0$. Then $\im(A)$ is linearly spanned by $a_1,\ldots,a_r$ and $\im(B)$ is linearly spanned by $b_1,\ldots,b_s$. Furthermore, one has 
 	\begin{align*}
 	A & = \sum_{i=1}^r \lambda_i a_i a_i^\top
 	& & \text{and} & B & = \sum_{j=1}^s \mu_j b_j b_j^\top.
 	\end{align*}
 	In view of \eqref{sprod:rank:1}, the latter representations imply
 	\[
 	\sprod{A}{B} = \sum_{i=1}^r \sum_{j=1}^s \lambda_i \mu_j \sprod{a_i}{b_j}^2.
 	\]
 	Consequently, $\sprod{A}{B} = 0$ holds if and only if $\sprod{a_i}{b_j}=0$ holds for all $i \in [r]$ and all $j \in [s]$. This gives the assertion.
 \end{proof}

  \begin{lem}[Key lemma]
  	\label{lem:key}
	Let $S$ be a set of cardinality at least $k$ and let $A_T \in (\cS_+^k)^m$, with $T \in \binom{S}{k},$ and $B_s \in (\cS_+^k)^m$, with $s \in S$, be such that the condition $\sprod{A_T}{B_s} = 0$ holds if and only if $s \in T$. 
	Then 
	\[
		|S| < R_k(k+1;(k+1)^m).
	\]
  \end{lem}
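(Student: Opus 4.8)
The plan is to argue by contradiction: I assume $|S| \ge R_k(k+1;(k+1)^m)$ and derive a contradiction with the ``iff'' hypothesis. Write $A_T = (A_{T,1},\ldots,A_{T,m})$ and $B_s = (B_{s,1},\ldots,B_{s,m})$, and for $T \in \binom{S}{k}$, $i \in [m]$ set $U_{T,i} := \sum_{s \in T} \im(B_{s,i}) \subseteq \R^k$. First I would record the elementary translation of the hypothesis. Since a scalar product of two elements of $(\cS_+^k)^m$ is a sum of scalar products of positive semidefinite matrices, each summand being nonnegative, $\sprod{A_T}{B_s} = 0$ is equivalent to $\sprod{A_{T,i}}{B_{s,i}} = 0$ for every $i$, which by Lemma~\ref{lem:images} means $\im(A_{T,i}) \perp \im(B_{s,i})$ for every $i$. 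Summing over $s \in T$, the hypothesis ``$\sprod{A_T}{B_s} = 0$ for all $s \in T$'' is equivalent to ``$\im(A_{T,i}) \perp U_{T,i}$ for all $i$'', while for $s \notin T$ we have $\sprod{A_T}{B_s} \ne 0$.

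Next I would set up the Ramsey colouring. Define $F : \binom{S}{k} \to \{0,1,\ldots,k\}^m$ by $F(T) := (\dim U_{T,1},\ldots,\dim U_{T,m})$; this uses exactly $(k+1)^m$ colours because each $U_{T,i}$ is a subspace of $\R^k$. Applying Theorem~\ref{thm:ramsey} (to a subset of $S$ of cardinality $R_k(k+1;(k+1)^m)$), I obtain a set $W = \{w_0,\ldots,w_k\} \subseteq S$ of $k+1$ elements on which $F$ is constant, say $F(T) = (d_1,\ldots,d_m)$ for every $T \in \binom{W}{k}$. Write $T_j := W \setminus \{w_j\}$ and $V_i := \sum_{w \in W} \im(B_{w,i})$. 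Now I invoke Lemma~\ref{lem:sum:v:sp}: for each $i$ the space $V_i$ is already the sum of at most $k$ of the $k+1$ spaces $\im(B_{w_0,i}),\ldots,\im(B_{w_k,i})$, so $V_i = U_{T_j,i}$ for some index $j$; hence $\dim V_i = d_i$. Since $U_{T_j,i} \subseteq V_i$ and $\dim U_{T_j,i} = d_i = \dim V_i$ for every $j$, this forces $U_{T_j,i} = V_i$, and in particular $\im(B_{w_j,i}) \subseteq U_{T_j,i}$, for all $i \in [m]$ and all $j \in \{0,\ldots,k\}$.

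The contradiction is then immediate: fix $j = 0$. By the translation in the first paragraph, $\im(A_{T_0,i}) \perp U_{T_0,i}$ for every $i$, and since $\im(B_{w_0,i}) \subseteq U_{T_0,i}$ we get $\im(A_{T_0,i}) \perp \im(B_{w_0,i})$ for every $i$, whence $\sprod{A_{T_0}}{B_{w_0}} = \sum_{i=1}^m \sprod{A_{T_0,i}}{B_{w_0,i}} = 0$. But $w_0 \notin T_0$, so the hypothesis forces $\sprod{A_{T_0}}{B_{w_0}} \ne 0$ — a contradiction. Hence $|S| < R_k(k+1;(k+1)^m)$.

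I expect the one genuinely delicate point to be the choice of colouring combined with the use of Lemma~\ref{lem:sum:v:sp}: one has to realise that the \emph{dimensions} of the spaces $U_{T,i}$ (rather than the spaces themselves, of which there are infinitely many, making Ramsey inapplicable directly) are the right finite invariant, and that once these dimensions are constant over $\binom{W}{k}$, a counting argument in $\R^k$ pins each $U_{T_j,i}$ down to the full span $V_i$. Everything else is bookkeeping with Lemma~\ref{lem:images} and the fact that $\sprod{\cdot}{\cdot}$ on $(\cS_+^k)^m$ is a sum of nonnegative terms. (Alternatively, one can bypass Lemma~\ref{lem:sum:v:sp} by picking, for each $j$, a vector $v_j \in \im(B_{w_j,i_0}) \setminus U_{T_j,i_0}$ for a suitable fixed $i_0$ and checking that $v_0,\ldots,v_k$ are linearly independent in $\R^k$; but the route through Lemma~\ref{lem:sum:v:sp} is cleaner.)
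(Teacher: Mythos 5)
Your proof is correct and follows essentially the same route as the paper: colour each $T\in\binom{S}{k}$ by the dimension vector $(\dim U_{T,1},\ldots,\dim U_{T,m})$, apply Ramsey to get a monochromatic $W$ of size $k+1$, use Lemma~\ref{lem:sum:v:sp} to force all the $U_{T,i}$ with $T\in\binom{W}{k}$ to coincide, and then contradict the ``only if'' direction of the hypothesis. The only (immaterial) difference is that you pin down the common subspace as the full span $V_i=\sum_{w\in W}\im(B_{w,i})$ by a single application of Lemma~\ref{lem:sum:v:sp} plus dimension counting, whereas the paper compares pairs $U_{T_1,i},U_{T_2,i}$ directly.
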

  \begin{proof}
  	The proof relies on Theorem~\ref{thm:ramsey}. Let
  	\begin{align*}
	  	A_T & = (A_{T,1},\ldots,A_{T,m}) & & \text{for} \ T \in \binom{S}{k},
	  	\\ B_s & = (B_{s,1},\ldots,B_{s,m}) & & \text{for} \ s \in S.
  	\end{align*}
  	For $T \in \binom{S}{k}$ and $i \in [m]$, consider
  	\begin{align}
	  	\label{U:def}
	  	U_{T,i}:= \sum_{t \in T} \im(B_{t,i})
  	\end{align}
  	and let 
  	\[
	  	d_{T,i}:=\dim(U_{T,i}).
	\] 
	We view  $\{0,\ldots,k\}^m$ as a set of $(k+1)^m$ colors and assign color $(d_{T,1},\ldots,d_{T,m}) \in \{0,\ldots,k\}^m$ to the set $T \in \binom{S}{k}$. 
  	
  	Assuming $|S| \ge R_k(k+1;(k+1)^m)$, we will arrive at a contradiction. By the definition of $R_k(k+1,(k+1)^m)$, there exists a subset $W$ of $S$ of cardinality $k+1$ such that all elements of $\binom{W}{k}$ are colored with the same color. This means, for some color $(d_1,\ldots,d_k) \in \{0,\ldots,k\}^m$, one has $d_{T,i}=d_i$ for all $i \in [m]$ and $T \in \binom{W}{k}$. Thus, when $T \in \binom{W}{k}$, the dimension of the vector space $U_{T,i}$  does not depend on $T$. We show that for $T \in \binom{W}{k}$, the vector space $U_{T,i}$ itself is independent of $T \in \binom{W}{k}$. This means, we will verify the following 

  	\begin{claim*} For some vector spaces $U_1,\ldots,U_m \subseteq \R^k$, one has $U_{T,i} = U_i$ for all $i \in [k]$ and $T \in \binom{W}{k}$.
  	\end{claim*}

  	If the claim was false, then we had $U_{T_1,i} \ne U_{T_2,i}$ for some $i \in [k]$ and $T_1, T_2 \in \binom{W}{k}$. Hence, $U_{T_1,i}$ and $U_{T_2,i}$ are proper subspaces of $U_{T_1,i} + U_{T_2,i}$ so that $\dim(U_{T_1,i} + U_{T_2,i}) > d_i$ holds. By \eqref{U:def}, 
  	\begin{equation}
	  	\label{sum:two:Us}
	  	U_{T_1,i} + U_{T_2,i} = \sum_{t \in T_1 \cup T_2} \im(B_{t,i}).
  	\end{equation}
  	Lemma~\ref{lem:sum:v:sp}, applied to the right-hand side of \eqref{sum:two:Us}, yields the existence of $T' \subseteq T_1 \cup T_2$ with $|T'| \le k$ such that 
  	\[
	  	U_{T_1,i} + U_{T_2,i} = \sum_{t \in T'} \im(B_{i,t}).
	\]
	The set $T' \subseteq T_1 \cup T_2 \subseteq W$ is a subset of some $T'' \in \binom{W}{k}$. We thus arrive at 
	\[
		d_{T'',i}=\dim(U_{T'',i}) \ge \dim(U_{T_1,i}+U_{T_2,i}) > d_i,
	\]
	which contradicts $d_{T'',i} = d_i$. This concludes the proof of the claim.

  	Since $W$ has cardinality $k+1$, we can choose an arbitrary decomposition $W = T \cup \{s\}$, where
  	$T \in \binom{W}{k}$ and  $s \in W \setminus T$. The equalities $\sum_{i=1}^m \sprod{A_{T,i}}{B_{t,i}}=\sprod{A_T}{B_t}=0$ for $t \in T$ and the inequalities $\sprod{A_{T,i}}{B_{t,i}} \ge 0$ for $i \in [m]$ yield $\sprod{A_{T,i}}{B_{t,i}}=0$ for all $t \in T$ and $i \in [m]$. By Lemma~\ref{lem:images},  $\im(A_{T,i})$ is orthogonal to $\im(B_{t,i})$. Since $t \in T$ is arbitrary, we conclude that $\im(A_{T,i})$ is orthogonal to $\sum_{t \in T} \im(B_{t,i}) = U_i$. By the choice of $U_i$, the linear space $U_i$ contains all $\im(B_{w,i})$ with $w \in W$ as a subspace. Hence, $\im(A_{T,i})$ is orthogonal to $\im(B_{s,i})$. By Lemma~\ref{lem:images}, this means that $\sprod{A_{T,i}}{B_{s,i}}=0$ holds for all $i \in[m]$. Thus, we have shown $\sprod{A_T}{B_s}=0$. Since $s \not\in T$, this contradicts the assumptions and yields the desired assertion.
  \end{proof}

 \begin{proof}[Proof of Theorem~\ref{thm:tool}.]
	  It is clear that $v_{n,2d}(x)$, for $x \in X$, belongs to $C^\ast$. Indeed, the inclusion $C \subseteq P_{n,2d}(X)$ implies $0 \le f(x) = \sprod{f}{v_{n,d}(x)}$ for all $f \in C$. We fix an arbitrary $m \in \N$ and show that $C$ has no $(\cS_+^k)^m$-lift. Let 
	  \[
		  N=R_k(k+1,(k+1)^m)
	\] 
	and consider a set $S$ of cardinality $N$ satisfying $(\ast)$. For every $T \in \binom{S}{k}$, choose a polynomial $f_T \in C$ which is equal to $0$ on $T$ and is strictly positive on $S \setminus T$. If $C$ had a $(\cS_+^k)^m$-lift, then by the implication \eqref{K:lift} $\Rightarrow$ \eqref{factorization:general} of Theorem~\ref{gpt:thm}, there would exist $A_T \in (\cS_+^k)^m$ with $T \in \binom{S}{k}$ and $B_s \in (\cS_+^k)^m$ with $s \in S$ such that 
	\[
		f_T(s) = \sprod{f_T}{v_{n,2d}(s)} = \sprod{A_T}{B_s}
	\]
	holds for all $T \in \binom {S}{k}$ and $s \in S$. By construction, $\sprod{A_T}{B_s} =0$ holds if and only if $s \in T$. Thus, assumptions of Lemma~\ref{lem:key} are fulfilled, and Lemma~\ref{lem:key} implies 
	\[
		|S| < R_k(k+1,(k+1)^m)=N,
	\]
	which is a contradiction to $|S|=N$. This shows that $C$ has no $(\cS_+^k)^m$-lift. 
\end{proof}

\section{Proofs of the consequences of Theorem~\ref{thm:tool}} 

\label{sect:proof:conseq:main}

We outline the proof Corollary~\ref{cor:between}. The corollary follows from  Theorem~\ref{thm:tool} by verifying the condition $(\ast)$ for $k=\binom{n+d}{n}-1$ and appropriately chosen polynomials $f$. In view of the assumption $\Sigma_{n,2d} \subseteq C$ in Corollary~\ref{cor:between} one can choose polynomials $f$ in $(\ast)$ to be squares. Thus, we need to find arbitrarily large $S \subseteq X$ such that for every $k$-element subset $T$ of $S$ there exists a polynomial which is equal to zero on $T$ and is not equal to zero on all points of $S \setminus T$. Taking $f$ in $(\ast)$ to be the square of such polynomial, we are able to verify the assumptions of Theorem~\ref{thm:tool} and obtain $\sxd(C) > k$. 

The construction of sets $S$ and the choice of polynomials vanishing on $T$ and not vanishing on $S \setminus T$ relies on the following Lemmas~\ref{lem:generic} and \ref{lem:vanishing}.
We say that a set $V$ of vectors in $\R^n$ is in \emph{general linear position} if every subset of $V$ of cardinality at most $n$ is linearly independent.

\begin{lem}
	\label{lem:generic}
	Let $N \in \N$ and $N \ge \binom{n+d}{n}$. Then the following hold:
	\begin{enumerate}[(a)]
		\item The set of all $(x^1,\ldots,x^N) \in (\R^n)^N$ such that $\{v_{n,d}(x^1),\ldots,v_{n,d}(x^N)\}$ is a set of $N$ vectors in general linear position, is dense in $(\R^n)^N$ in the Euclidean topology.
		\item If $\{x_1,\ldots,x_N\}$ is an $N$-element subset of $\R^n$ such that the set $\{v_{n,d}(x^1),\ldots,v_{n,d}(x^N)\}$ is in general linear position, then every non-zero polynomial $f \in \R[\ux]_d$ is equal to zero on at most $\binom{n+d}{n} -1 $ points of $\{x^1,\ldots,x^N\}$. 
	\end{enumerate}
\end{lem}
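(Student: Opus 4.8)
Write $D := \binom{n+d}{n} = \dim \R[\ux]_d$ and recall that, via Remark~\ref{rem:identificiation}, a polynomial $f \in \R[\ux]_d$ is identified with its coefficient vector so that $f(x^j) = \sprod{f}{v_{n,d}(x^j)}$.

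\textbf{Part (b).} Suppose a nonzero $f \in \R[\ux]_d$ vanishes on $D$ of the points $x^1,\ldots,x^N$, say on $x^j$ for $j$ in some $J \in \binom{[N]}{D}$. Then every vector $v_{n,d}(x^j)$ with $j \in J$ lies in the hyperplane $\setcond{u \in \R^D}{\sprod{f}{u}=0}$, a proper subspace of $\R^D$. But $\{v_{n,d}(x^j) : j \in J\}$ is a $D$-element subset of a set in general linear position, hence linearly independent, hence a basis of $\R^D$, and a basis cannot be contained in a proper subspace. This contradiction shows that $f$ vanishes on at most $D-1$ of the points.

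\textbf{Part (a).} For $J \in \binom{[N]}{D}$ let $\Delta_J \colon (\R^n)^N \to \R$ be the polynomial $\Delta_J(x^1,\ldots,x^N) := \det\bigl[\, v_{n,d}(x^j)\,\bigr]_{j \in J}$, the determinant of the $D \times D$ matrix whose columns are the indicated monomial vectors. Since $N \ge D$, a tuple $(x^1,\ldots,x^N)$ has $\{v_{n,d}(x^1),\ldots,v_{n,d}(x^N)\}$ in general linear position if and only if every $D$-element subset of these vectors is a basis of $\R^D$, i.e.\ if and only if $\Delta_J(x^1,\ldots,x^N) \ne 0$ for all $J \in \binom{[N]}{D}$; so the set in question is $(\R^n)^N \setminus \bigcup_{J} \setcond{\Delta_J = 0}{{}}$. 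It therefore suffices to show that each $\Delta_J$ is not the zero polynomial, which amounts to exhibiting $D$ points $y^1,\ldots,y^D \in \R^n$ with $v_{n,d}(y^1),\ldots,v_{n,d}(y^D)$ linearly independent. I would build these greedily: having chosen $y^1,\ldots,y^r$ with $r<D$ and linearly independent monomial vectors spanning a proper subspace $L \subsetneq \R^D$, pick a nonzero $c \in \R^D$ orthogonal to $L$ and let $g \in \R[\ux]_d$ have coefficient vector $c$; then $g \ne 0$, so some $y^{r+1} \in \R^n$ has $\sprod{c}{v_{n,d}(y^{r+1})} = g(y^{r+1}) \ne 0$, whence $v_{n,d}(y^{r+1}) \notin L$ and the enlarged set is still independent. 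After $D$ steps we are done.

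Consequently each $\setcond{\Delta_J = 0}{{}}$ is a proper algebraic subset of $(\R^n)^N$, hence closed with empty interior, and their finite union is again closed with empty interior; its complement — the desired set — is therefore open and dense. The only non-routine ingredient is the final greedy construction, i.e.\ the classical fact that the monomial \emph{functions} $\ux^\alpha$ with $|\alpha| \le d$ are linearly independent on $\R^n$ (equivalently, that $g \ne 0$ in $\R[\ux]_d$ implies $g$ is not identically zero as a function); everything else is bookkeeping.
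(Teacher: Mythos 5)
Your proof is correct and follows essentially the same route as the paper: part (b) is the same linear-algebra argument (a nonzero $f$ vanishing on $\binom{n+d}{n}$ points would annihilate a basis of $\R^{\binom{n+d}{n}}$), and part (a) uses the same determinant polynomials $\Delta_J$ whose common nonvanishing locus is open and dense. The only divergence is how you certify $\Delta_J \not\equiv 0$: the paper observes that the Leibniz expansion of the determinant consists of $k!$ distinct monomials and hence cannot vanish identically, whereas you greedily construct $\binom{n+d}{n}$ points with linearly independent monomial vectors from the fact that a nonzero polynomial is not identically zero as a function on $\R^n$ --- both justifications are valid.
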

\begin{proof}
	(a): Let $k:= \binom{n+d}{n}$.
	For every $I = \{i_1,\ldots,i_k\} \in \binom{[N]}{k}$ with $1 \le i_1 < \ldots \le i_k \le N$, consider 
	\[
		D_I(x^1,\ldots,x^N) := \det ( v_{n,d}(x^{i_1}), \ldots, v_{n,d}(x^{i_k}) ).
	\]
	Clearly, $D_I(x^1,\ldots,x^N)$ can be viewed as a polynomial in $N n$ variables. Note that $D_I(x^1,\ldots,x^N)$ is a non-zero polynomial, as by Leibniz formula, it involves exactly $k!$ distinct monomials. It follows that $D_I(x^1,\ldots,x^N)$ is non-zero on an open dense subset of $(\R^n)^N$. Consequently, all  $D_I(x^1,\ldots,x^N)$ with $I \in \binom{[N]}{k}$ are simultaneously not equal to zero on a dense subset of $(\R^n)^N$. 
	
	(b): Assume that $v_{n,d}(x^1),\ldots,v_{n,d}(x^N)$ are in general position. 
	If $f \in \R[\ux]_d$ vanishes on a $k$-element set $\{x^{i_1},\ldots,x^{i_k} \}$, then
	$f(x^{i_1}) = \cdots = f(x^{i_k})=0$. This condition on $f$, can be viewed as a homogeneous linear system with $k$ equalities in $k$ variables, by interpreting $f$ as an element of $\R^k$. The linearly independent vectors $v_{n,d}(x^{i_1}), \ldots, v_{n,d}(x^{i_k})$ form the left hand side of this system. We thus conclude that $f=0$. 
\end{proof}

\begin{lem}
	\label{lem:vanishing} 
	For every subset $S$ of $\R^n$ of cardinality at most $\binom{n+d}{n} -1$ there exists a non-zero polynomial $f \in \R[\ux]_d$, which is equal to zero on $S$.
\end{lem}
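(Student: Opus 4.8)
The plan is to reduce the statement to the rank--nullity theorem for a suitable linear map. First I would invoke Remark~\ref{rem:identificiation} to identify $\R[\ux]_d$ with $\R^{\binom{n+d}{n}}$, so that for every $s \in \R^n$ the evaluation $f \mapsto f(s)$ of a polynomial $f \in \R[\ux]_d$ becomes the linear functional $f \mapsto \sprod{f}{v_{n,d}(s)}$. Under this identification, consider the linear map $\Phi : \R[\ux]_d \to \R^S$ defined by $\Phi(f) := (f(s))_{s \in S} = (\sprod{f}{v_{n,d}(s)})_{s \in S}$. A polynomial $f$ vanishes on $S$ precisely when $f \in \ker \Phi$.

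Next I would apply the rank--nullity theorem to $\Phi$. Since $\dim \R[\ux]_d = \binom{n+d}{n}$ and the codomain $\R^S$ has dimension $|S|$, the kernel of $\Phi$ has dimension at least $\binom{n+d}{n} - |S|$. By the hypothesis $|S| \le \binom{n+d}{n} - 1$, this lower bound is at least $1$, so $\ker \Phi \ne \{0\}$. Any non-zero element of $\ker \Phi$ is then a non-zero polynomial $f \in \R[\ux]_d$ vanishing on all of $S$, which is exactly what the lemma asserts.

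There is no real obstacle here: the argument is a direct application of elementary linear algebra, the only point worth recording being the dimension count $\dim \R[\ux]_d = \binom{n+d}{n}$, which was already noted in the introduction. Equivalently, one may phrase the reasoning by saying that the homogeneous linear system $\sprod{f}{v_{n,d}(s)} = 0$, $s \in S$, in the $\binom{n+d}{n}$ unknown coefficients of $f$ has strictly fewer equations than unknowns and therefore admits a non-trivial solution.
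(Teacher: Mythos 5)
Your proof is correct and is essentially the paper's own argument: both rest on the observation that the vanishing conditions form a homogeneous linear system with $|S| \le \binom{n+d}{n}-1$ equations in $\binom{n+d}{n}$ unknowns, hence admit a non-trivial solution. Your rank--nullity phrasing is just a slightly more formal wording of the same underdetermined-system argument.
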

\begin{proof}
	Since the dimension of $\R[\ux]_d$ is $k=\binom{n+d}{n}$ the conditions $f(s)=0$ for all $s \in S$ can be viewed as an under-determined homogeneous linear system in the coefficients of $f$. This implies that there exists a non-zero polynomial $f$ as in the assertion. 
\end{proof}

\begin{proof}[Proof of Corollary~\ref{cor:between}]
	Let $N$ be an arbitrary integer with $N \ge k:=\binom{n+d}{n}-1$.  By Lemma~\ref{lem:generic}, there exist $x^1,\ldots,x^N \in X$ such that $\{v_{n,d}(x^1),\ldots,v_{n,d}(x^N)\}$ is an $N$-element set in general linear position. Let $S = \{x^1,\ldots,x^N\}$. By Lemma~\ref{lem:vanishing}, for every $T \in \binom{S}{k}$ there exists a non-zero polynomial $f_T \in \R[\ux]_d$ equal to zero on $T$. In view of Lemma~\ref{lem:generic}, $f_T$ is not equal to zero on $S \setminus T$. The square $f_T^2$ of $f_T$ belongs to $\Sigma_{n,2d}$ and by this also to $C$. Since $N$ is chosen arbitrarily, assumptions of Theorem~\ref{thm:tool} are fulfilled. We thus conclude that $\sxd(C) > k$. 
\end{proof}

\begin{rem}
	In the case $n=1$, in the above proof, one could also choose $x^1,\ldots,x^N \in X$ to be arbitrary distinct values and fix
	\[
	f_T := \prod_{t \in T} (x-t) \in \R[x]
	\]
	This was also the choice used for deriving the lower bound $\sxd(\Sigma_{1,4}) \ge 3$ in  \cite[Sect.~IV-B]{ahmadi2017improving}.
\end{rem}

\begin{rem}
	In the case $d=1$, in the above proof, $v_{n,1}(x) \in \R^{n+1}$ is obtained from $x \in \R^n$ by appending a component $1$. So, one can choose
	\[
		x^i  = (x^\ast + t_i^1,\ldots, x^\ast + t_i^n)
	\]
	using a point  $x^\ast$ in the interior of $X$ and $N$ distinct values $t_1,\ldots,t_N \in \R$ that are sufficiently close to zero. With this choice, the set $\{v_{n,1}(x^1),\ldots,v_{n,1}(x^N)\}$ of $N$ vectors is in general linear position.  The latter can be seen by observing that $\det(v_{n,1}(x^{i_1}),\ldots,v_{n,1}(x^{i_{n+1}}))$ is the Vandermonde determinant.
\end{rem}

Applying Corollary~\ref{cor:between} in the case $C=\Sigma_{n,2d}$, we determine $\sxd(\Sigma_{n,2d})$ and $\sxc(\Sigma_{n,2d})$:

\begin{proof}[Proof of Corollary~\ref{cor:sxd:sos}] As mentioned in the introduction, the inequalities 
	\[
		\sxd(\Sigma_{n,2d}) \le \sxc(P_{n,2d}) \le \binom{n+d}{n}
	\] are known. Applying Corollary~\ref{cor:between} for $C=\Sigma_{n,2d}$, we get $\sxd(\Sigma_{n,2d}) \ge \binom{n+d}{n}$. 
\end{proof}

Truncated quadratic modules are sums of finitely many cones. To determine the semidefinite extension degree of the truncated quadratic modules, we first make an observation on how the semidefinite extension degree behaves with respect to taking sums: 

\begin{lem}
	\label{lem:sxd:sum}
	Let $C_1,\ldots,C_k \subseteq \R^n$. Then 
	\[
	\sxd(C_1 + \cdots + C_k) \le \max \setcond{\sxd(C_i)}{i =1,\ldots,k}. 
	\]
\end{lem}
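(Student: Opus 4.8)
The plan is to show that a $(\cS_+^K)^m$-lift for each summand can be assembled, by taking products, into a $(\cS_+^K)^{M}$-lift of the sum, where $K = \max_i \sxd(C_i)$.

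First I would set $K := \max\{\sxd(C_1),\ldots,\sxd(C_k)\}$, which we may assume is finite (otherwise there is nothing to prove). By definition of $\sxd$, for each $i \in [k]$ there is some finite $m_i$ such that $C_i$ has an $(\cS_+^K)^{m_i}$-lift; note here I use that $\cS_+^{K'} $ embeds (as a face, via block-diagonal padding) into $\cS_+^{K}$ whenever $K' \le K$, so each $C_i$ actually has an $(\cS_+^K)^{m_i}$-lift even if its own extension degree is smaller than $K$. Thus there are affine spaces $H_i \subseteq (\cS_+^K)^{m_i}$ — more precisely affine subspaces of $(\cS^K)^{m_i}$ — and linear maps $\pi_i$ with $C_i = \pi_i\bigl((\cS_+^K)^{m_i} \cap H_i\bigr)$.

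Next I would combine these. Set $M := m_1 + \cdots + m_k$ and consider $K' := (\cS_+^K)^{M} = \prod_{i=1}^k (\cS_+^K)^{m_i}$. Inside the ambient space $\prod_i (\cS^K)^{m_i}$ take the affine space $H := H_1 \times \cdots \times H_k$, and let $\pi : \prod_i (\cS^K)^{m_i} \to \R^n$ be $\pi(z_1,\ldots,z_k) := \pi_1(z_1) + \cdots + \pi_k(z_k)$, which is linear since each $\pi_i$ is. Then $(\cS_+^K)^M \cap H = \prod_i \bigl((\cS_+^K)^{m_i} \cap H_i\bigr)$, and applying $\pi$ gives exactly $\sum_i \pi_i\bigl((\cS_+^K)^{m_i} \cap H_i\bigr) = C_1 + \cdots + C_k$ by the definition of the Minkowski sum. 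Hence $C_1 + \cdots + C_k$ has a $(\cS_+^K)^M$-lift with $M$ finite, so $\sxd(C_1 + \cdots + C_k) \le K$, as claimed.

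I do not expect any serious obstacle here; this is a soft, formal argument. The only point requiring a small remark is the padding observation — that having an $(\cS_+^{K'})^{m'}$-lift with $K' \le K$ entails having an $(\cS_+^K)^{m'}$-lift — which follows because the map $\cS_+^{K'} \hookrightarrow \cS_+^K$ sending $A$ to the block-diagonal matrix $\operatorname{diag}(A,0)$ is linear, injective, and its image is $\cS_+^K \cap V$ for the linear subspace $V$ of matrices supported on the top-left $K' \times K'$ block; composing lifts then gives the desired lift. One should also note that if some $\sxd(C_i) = \infty$ the inequality is vacuous, and that the statement as phrased does not require the $C_i$ to be convex cones — the Minkowski-sum/product manipulation above works verbatim for arbitrary subsets of $\R^n$.
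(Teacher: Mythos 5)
Your proposal is correct and is essentially the paper's argument: the paper's one-line proof observes that $C_1+\cdots+C_k$ is the image of $C_1\times\cdots\times C_k$ under the linear summation map, and your product-of-lifts construction is exactly the spelled-out version of this. The only detail you add beyond the paper's terse proof is the block-diagonal padding remark, which is a legitimate (implicit) ingredient and is handled correctly.
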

\begin{proof}
	This follows directly from the fact that $C_1 + \cdots + C_k$ is a linear image of $C_1 \times \cdots \times C_k$ under the linear map $(u_1,\ldots,u_k) \mapsto u_1 + \cdots + u_k$ acting from $(\R^n)^k$ to $\R^n$. 
\end{proof}

As we will see in the proof of Corollary~\ref{cor:sxd:trunc:quad:module}, for cones occurring in the definition of the truncated quadratic module and under assumptions of Corollary~\ref{cor:sxd:trunc:quad:module}, the inequality in Lemma~\ref{lem:sxd:sum} is in fact an equality. The proof of Corollary~\ref{cor:sxd:trunc:quad:module} reuses the proof approach of Corollary~\ref{cor:between}.

\begin{proof}[Proof of Corollary~\ref{cor:sxd:trunc:quad:module}]
	Let $g_0:=1$ and $C_i := g_i \Sigma_{n,2 d_i}$ for $i=0,\ldots,d$. In this notation, one has $C = C_0 + \cdots + C_k$. Since the cone $C_i$ is linearly isomorphic to $\Sigma_{n,2 d_i}$ for each $i =0,\ldots,k$ and since the semidefinite extension degree of $\Sigma_{n,2d_i}$ is determined by Corollary~\ref{cor:sxd:sos}, taking into account Lemma~\ref{lem:sxd:sum}, we  obtain the upper bound
	\begin{align*}
		\sxd(C) & \le \max \setcond{ \sxd(C_i) }{i=0,\ldots,k} 
		\\ & = \max \setcond{ \sxd(\Sigma_{n,2 d_i}) }{i=0,\ldots,k}
		\\ & = \binom{n+d}{n}
	\end{align*}
	on $\sxd(C)$. We can adapt the proof of Corollary~\ref{cor:between} to derive the matching lower bound 
	\(
		\sxd(C) \ge \binom{n+d}{n}.
	\)
	Fix $i=0,\ldots,k$ with $d_i=d$. Since $g_i$ is not a zero polynomial, the set $\setcond{x \in X}{g_i(x) \ne 0}$ is $n$-dimensional. Thus, we can fix arbitrarily many points $x^1,\ldots,x^N$ with $N \ge \binom{n+d}{n} -1$ as in the proof of Corollary~\ref{cor:between} that satisfy the additional assumption $g_i(x^j) \ne 0$ for $j \in [N]$.  The polynomials $f_T$ with $T \subseteq S$ and $|T|=\binom{n+d}{n}-1$ from the proof of Corollary~\ref{cor:between} give rise to polynomials $g_i f_T^2$ in $C_i$ that vanish on $T$ and are strictly positive on $S \setminus T$. In view of Theorem~\ref{thm:tool}, we get $\sxd(C) \ge \binom{n+d}{n}$. 
\end{proof}

To prove Corollary~\ref{cor:sxd:sdp}, it suffices to observe that $\cS_+^k$ is linearly isomorphic to $\Sigma_{k-1,2}$:
\begin{proof}[Proof of Corollary~\ref{cor:sxd:sdp}]
We assume $k \ge 2$, as otherwise the assertion is trivial. 
Consider the maps \(A \mapsto q_A \mapsto f_A\) given by
\begin{align*}
	q_A(x) & := x^\top A x,
	\\ f_A(x_1,\ldots,x_{k-1}) &:= q_A(x_1,\ldots,x_{k-1},1).
\end{align*}
It is straightforward to see that $A \mapsto f_A$ is a linear bijection acting from $\cS^k$ to $\R[x_1,\ldots,x_{k-1}]_2$ that maps $\cS_+^k$ onto $\Sigma_{k-1,2}$. Thus, the assertion follows by applying Corollary~\ref{cor:sxd:sos} for $d=1$ and $n=k-1$. 
\end{proof}

The cases of equality $P_{n,2d}=\Sigma_{n,2d}$ are characterized by a classical result of Hilbert, while Scheiderer's result result shows that, in the case $P_{n,2d} \ne \Sigma_{n,2d}$, the cone $P_{n,2d}$ has no semidefinite extended formulation. In Corollary~\ref{cor:nonneg:cone}, we use these results and the knowledge of $\sxd(\Sigma_{n,2d})$ to determine $\sxd(P_{n,2d})$.

\begin{proof}[Proof of  Corollary~\ref{cor:nonneg:cone}]
	If $n,d \ge 2$ and $n,d \ne (2,2)$, Theorem~\ref{thm:scheiderer} of Scheiderer implies $\sxd(P_{n,2d})=\sxc(P_{n,2d})=\infty$. Otherwise, by Theorem~\ref{thm:hilbert} of Hilbert, $P_{n,2d}=\Sigma_{n,2d}$. Thus, $\sxd(P_{n,2d})=\sxc(P_{n,2d})=\binom{n+d}{n}$ follows using Corollary~\ref{cor:sxd:sos}.
\end{proof}

Corollaries~\ref{cor:mom:cones:X} and \ref{cor:mom:cones} are obtained through straightforward dualization. Moment cones are known to be dual to cones of non-negative polynomials:

\begin{lem}[Folklore]
	\label{dual:nonneg}
	$P_{n,2d}(X)^\ast = M_{n,2d}(X)$ for every $X \subseteq \R^n$. In particular, $P_{n,2d}^\ast = M_{n,2d}$. 
\end{lem}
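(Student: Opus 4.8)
\textbf{Proof plan for Lemma~\ref{dual:nonneg}.}

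The plan is to prove $P_{n,2d}(X)^\ast = M_{n,2d}(X)$ by a direct application of the bipolar theorem \eqref{double:dual}, after first identifying the ``easy'' dual. First I would recall the identification from Remark~\ref{rem:identificiation}, so that $P_{n,2d}(X) \subseteq \R[\ux]_{2d}$ is regarded as a subset of $\R^{\binom{n+2d}{n}}$ via $f \mapsto (f_\alpha)_{|\alpha|\le 2d}$, and the evaluation $f(x)$ becomes the scalar product $\sprod{f}{v_{n,2d}(x)}$. Under this identification $M_{n,2d}(X) = \cl(\cone(\setcond{v_{n,2d}(x)}{x \in X}))$ lives in the same space, so the claimed duality is an equality of two cones in $\R^{\binom{n+2d}{n}}$.

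The key step is to compute the dual of the generating set $V := \setcond{v_{n,2d}(x)}{x \in X}$. By the definition of the conic dual, $f \in V^\ast$ iff $\sprod{f}{v_{n,2d}(x)} \ge 0$ for all $x \in X$, i.e.\ iff $f(x) \ge 0$ for all $x \in X$, which is exactly the condition $f \in P_{n,2d}(X)$. Hence $V^\ast = P_{n,2d}(X)$. Taking duals once more and invoking \eqref{double:dual} with $V$ in place of $X$ (note $V \ne \emptyset$ whenever $X \ne \emptyset$; the case $X = \emptyset$ being handled separately, where $P_{n,2d}(\emptyset) = \R[\ux]_{2d}$ and its dual is $\{0\} = M_{n,2d}(\emptyset)$), we get
\[
	P_{n,2d}(X)^\ast = (V^\ast)^\ast = \cl(\cone(V)) = M_{n,2d}(X).
\]
The ``in particular'' statement is the special case $X = \R^n$.

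I do not expect any real obstacle here; the only points requiring a word of care are the trivial edge case $X = \emptyset$ and the observation that the identification of $\R[\ux]_{2d}$ with a Euclidean space is consistent with the scalar products used to define both $P_{n,2d}(X)^\ast$ and $M_{n,2d}(X)$, so that \eqref{double:dual} applies verbatim. Everything else is a one-line unwinding of definitions.
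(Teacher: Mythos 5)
Your proposal is correct and follows essentially the same route as the paper: identify $P_{n,2d}(X)$ as the dual of the set $V=\setcond{v_{n,2d}(x)}{x\in X}$ via the evaluation-as-scalar-product identification, then apply the bipolar identity \eqref{double:dual} to conclude $P_{n,2d}(X)^\ast=\cl(\cone(V))=M_{n,2d}(X)$. The extra care about the empty case is harmless but not needed in the paper's version.
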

\begin{proof}
	Writing evaluation of $f$ at $x \in \R^n$ as the scalar product $f(x) = \sprod{f}{v_{n,2d}(x)}$, we obtain 
	$P_{n,2d}(X) = (\setcond{v_{n,2d}(x)}{x \in X})^\ast$. Dualizing the latter equation and using \eqref{double:dual}, we get $P_{n,2d}(X)^\ast = M_{n,2d}(X)$. 
\end{proof}

\begin{proof}[Proof of Corollary~\ref{cor:mom:cones:X}]
	By Lemma~\ref{dual:nonneg}, $\sxd(M_{n,2d}(X))=\sxd(P_{n,2d}(X)^\ast)$.
	By \eqref{inv:dual:sxd},  the semidefinite extension degree is preserved under duality, so that one has $\sxd(P_{n,2d}(X)^\ast)=\sxd(P_{n,2d}(X))$.
	By Corollary~\ref{cor:between}, $\sxd(P_{n,2d}(X)) \ge \binom{n+d}{n}$. This yields the assertion. 
\end{proof}

\begin{proof}[Proof of Corollary~\ref{cor:mom:cones}]
	We have $M_{n,2d} = P_{n,2d}^\ast$, by Lemma~\ref{dual:nonneg}. By \eqref{inv:dual:sxc} and \eqref{inv:dual:sxd} the semidefinite extension degree and the semidefinite extension complexity are preserved under duality, so that one has $\sxd(P_{n,2d}^\ast)=\sxd(P_{n,2d})$ and $\sxc(P_{n,2d}^\ast)=\sxc(P_{n,2d})$. We conclude that $M_{n,2d}$ has the same semidefinite extension degree and the semidefinite extension complexity as the cone $P_{n,2d}$. An application of Corollary~\ref{cor:nonneg:cone} concludes the proof.
\end{proof}

The lower bound on the semidefinite extension degree of the copositive cone  $\CP_k$ in Corollary~\ref{cor:copositive} is established by interpreting $\CP_k$ as $P_{n,2d}(X)$ with an appropriate choice of $n,d$ and $X$. The matching upper bound for $k \le 4$ is a direct consequence of the fact that, for $k \le 4$, a symmetric $k \times k$ matrix is copositive if and only if it is a sum of a non-negative matrix and a positive semidefinite matrix \cite{MaxfieldMinc1962}. 

\begin{proof}[Proof of Corollary~\ref{cor:copositive}]
	We assume  $k \ge 2$ to exclude the trivial case $k=1$. 
	We can use the linear bijection $A \mapsto f_A$ from the proof of  Corollary~\ref{cor:sxd:sdp}. It is easy to see that this bijection sends $\CP_k$ onto $P_{k-1,2}(\R_+^{k-1})$. Thus, by Theorem~\ref{cor:between}, we get $\sxd(\CP_k)=\sxd(P_{k-1,2}(\R_+^{k-1})) \ge k$. 
	
	It is known that $\CP_k = \cS_+^k + \mathcal{N}_+^k$ holds for $k \le 4$, where $\mathcal{N}_+^k := \cS^k \cap \R_+^{k \times k}$ is the cone of symmetric $k \times k$ matrices with non-negative components (see \cite{MaxfieldMinc1962} and \cite[Sect.~3]{Duer:2010}). Lemma~\ref{lem:sxd:sum} yields $\sxd(\CP_k) \le \max \{ \sxd(\cS_+^k), \sxd(N_+^k) \}  = k.$ 
\end{proof}

\section{Proofs of results for the SONC cone}

\label{sec:SONC}


In this section, we first convert the existing description of the SONC cone by Iliman and Timo de Wolff to an alternative description, which is more convenient for our purposes. Once the alternative description is obtained, the existence of a second-order cone extended formulation for the SONC cone, will follow from a well-known result of Ben-Tal and Nemirovski. 


The definition of the SONC cone involves non-negative circuit polynomials: 

\begin{dfn}[Non-negative circuit polynomials and circuit number \cite{MR3481195,MR3691721}]
	\label{def:ncp}
	Let $\cA_{n}$ be the set of all $A \subseteq \Z_+^n$ of the form $A=\{\alpha(0),\ldots,\alpha(k),\beta\}$, where $k \in [n]$, with the following properties:
\begin{enumerate}
	\item $\alpha(0),\ldots,\alpha(k) \in (2 \Z_+)^n$,
	\item $\alpha(0),\ldots,\alpha(k)$ are vertices of a $k$-dimensional simplex,
	\item $\beta$ is in the relative interior of the simplex with the vertices $\alpha(0),\ldots,\alpha(k)$, that is, 
	\begin{align*}
		\beta & = \sum_{i=0}^k \lambda_i \alpha(i)
		& & \text{and} &  1 & = \sum_{i=0}^k \lambda_i
	\end{align*}
	holds for some coefficients $\lambda_0 > 0,\ldots,\lambda_k > 0$ uniquely determined by $\alpha(0),\ldots,\alpha(k)$ and $\beta$. 
\end{enumerate}
Elements of the set
\[
	\tilde{P}_{n,A} := \setcond{f = \sum_{\alpha \in A} f_\alpha \ux^\alpha}{f \ge 0 \ \text{on} \ \R^n, \ f_{\alpha(0)} > 0,\ldots, f_{\alpha(k)} > 0}.
\]
are called \emph{non-negative circuit polynomials} with respect to the circuit $A \in \cA_n$. If $f = \sum_{\alpha \in A} f_\alpha \ux^\alpha$, is polynomial satisfying $f_{\alpha(i)} \ge 0$ for all $i \in \{0,\ldots,k\}$, then the value
\begin{equation}
\label{Theta:f}
\Theta_f := \prod_{i=0}^k \left( \frac{f_{\alpha(i)}}{\lambda_i} \right)^{\lambda_i}.
\end{equation}
is called the \emph{circuit number} of $f$.
\end{dfn}

\begin{thm}[Iliman and Timo de Wolff {\cite[Theorem~3.8]{MR3481195}}]
	\label{thm:idw}
In the notation of Definition~\ref{def:ncp}, the set $\tilde{P}_{n,A}$ is described as 
\begin{align*}
\tilde{P}_{n,A} & = \setcond{f = \sum_{\alpha \in A} f_\alpha \ux^\alpha}{f_{\alpha(0)} > 0, \ldots, f_{\alpha(k)} > 0, \ f_\beta \ge -\Theta_f}  & & \text{if} \ \beta \in (2 \Z_+)^n
\end{align*}
and 
\begin{align*}
\tilde{P}_{n,A} & = \setcond{f = \sum_{\alpha \in A} f_\alpha \ux^\alpha}{f_{\alpha(0)} > 0, \ldots, f_{\alpha(k)} > 0, \ \Theta_f \ge f_\beta \ge -\Theta_f}  & & \text{if} \ \beta \not\in (2 \Z_+)^n.
\end{align*}	
\end{thm}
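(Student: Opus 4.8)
The plan is to prove the two descriptions of $\tilde P_{n,A}$ by a direct application of the weighted arithmetic--geometric mean (AM--GM) inequality, treating separately the two cases according to the parity of $\beta$. Throughout, write $A = \{\alpha(0),\dots,\alpha(k),\beta\}$ with the barycentric coordinates $\lambda_0,\dots,\lambda_k > 0$, $\sum_i \lambda_i = 1$, expressing $\beta$ as a convex combination of the $\alpha(i)$. Fix $f = \sum_{\alpha \in A} f_\alpha \ux^\alpha$ with $f_{\alpha(0)} > 0,\dots,f_{\alpha(k)} > 0$; then $\Theta_f = \prod_{i=0}^k (f_{\alpha(i)}/\lambda_i)^{\lambda_i}$ is a well-defined positive number. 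The whole point is to understand, for such $f$, when the inequality $f(x) \ge 0$ holds for all $x \in \R^n$.

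\medskip
First I would handle the key inequality. For any $x \in \R^n$ with all coordinates nonzero, apply the weighted AM--GM inequality to the $k+1$ nonnegative numbers $t_i := (f_{\alpha(i)}/\lambda_i)\,\ux^{\alpha(i)}$ (which are indeed nonnegative since each $\alpha(i) \in (2\Z_+)^n$, making $\ux^{\alpha(i)}$ a perfect square, hence $\ge 0$) with weights $\lambda_i$:
\[
	\sum_{i=0}^k \lambda_i t_i \ \ge\ \prod_{i=0}^k t_i^{\lambda_i}
	\ =\ \Bigl(\prod_{i=0}^k (f_{\alpha(i)}/\lambda_i)^{\lambda_i}\Bigr)\,\ux^{\sum_i \lambda_i \alpha(i)}
	\ =\ \Theta_f\,\ux^\beta .
\]
The left-hand side is exactly $\sum_{i=0}^k f_{\alpha(i)}\ux^{\alpha(i)}$, so $\sum_{i=0}^k f_{\alpha(i)}\ux^{\alpha(i)} \ge \Theta_f\,\ux^\beta$ for all such $x$, and by continuity for all $x \in \R^n$. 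When $\beta \in (2\Z_+)^n$ we also have $\ux^\beta \ge 0$, so $f(x) = \sum_i f_{\alpha(i)}\ux^{\alpha(i)} + f_\beta \ux^\beta \ge (\Theta_f + f_\beta)\ux^\beta \ge 0$ provided $f_\beta \ge -\Theta_f$; when $\beta \notin (2\Z_+)^n$ the monomial $\ux^\beta$ takes both signs, so one needs $f_\beta + \Theta_f \ge 0$ applied with $\ux^\beta \ge 0$ and, replacing some $x_j$ by $-x_j$ to flip the sign of $\ux^\beta$ (possible precisely because $\beta \notin (2\Z_+)^n$), also $f_\beta \le \Theta_f$. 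This gives the ``$\supseteq$'' inclusions.

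\medskip
For the reverse inclusions ``$\subseteq$'', suppose $f \ge 0$ on $\R^n$; I must show $f_\beta \ge -\Theta_f$ (and, in the second case, $f_\beta \le \Theta_f$). The natural move is to check that equality in AM--GM is attained: there is a point $x^\ast$ with all coordinates nonzero at which $t_0 = \dots = t_k$, i.e. $(f_{\alpha(i)}/\lambda_i)\,(x^\ast)^{\alpha(i)}$ is independent of $i$. Since the $\alpha(i)$ are affinely independent (they are vertices of a $k$-simplex), one can solve the resulting $k$ linear equations in $\log|x^\ast_j|$ for a suitable choice of sign pattern; here is where the affine independence hypothesis gets used, and it is the one technical point that needs a careful (but standard) argument — essentially that the map $x \mapsto (\ux^{\alpha(0)-\alpha(k)},\dots,\ux^{\alpha(k-1)-\alpha(k)})$ is surjective onto the positive orthant. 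At such $x^\ast$ one gets $\sum_{i} f_{\alpha(i)}(x^\ast)^{\alpha(i)} = \Theta_f\,(x^\ast)^\beta$, so $0 \le f(x^\ast) = (\Theta_f + f_\beta)(x^\ast)^\beta$; choosing the sign of the coordinates of $x^\ast$ so that $(x^\ast)^\beta > 0$ forces $f_\beta \ge -\Theta_f$, and when $\beta \notin (2\Z_+)^n$, flipping one coordinate's sign gives $(x^\ast)^\beta < 0$ and hence $f_\beta \le \Theta_f$. The main obstacle is this surjectivity/solvability step — verifying that one can realize equality in AM--GM at a genuine real point with the desired sign of $\ux^\beta$ — but it reduces to linear algebra over the exponent vectors together with the freedom to flip signs of coordinates, so no serious difficulty is expected.
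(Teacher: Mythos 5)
The paper does not prove this statement: it is imported verbatim as Theorem~3.8 of Iliman and de~Wolff \cite{MR3481195}, so there is no in-paper proof to compare against. Your argument is, as far as I can tell, a correct self-contained proof, and it is essentially the standard one from the cited source: weighted AM--GM over the vertex terms gives sufficiency of the coefficient conditions, and attaining equality in AM--GM at a real point (solvable because $\alpha(0)-\alpha(k),\dots,\alpha(k-1)-\alpha(k)$ are linearly independent, so the linear system in $\log|x_j^\ast|$ has a solution) gives necessity, with the sign pattern of $x^\ast$ used to force $(x^\ast)^\beta>0$ or, when some $\beta_j$ is odd, $(x^\ast)^\beta<0$. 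One small correction: since each $\alpha(i)$ is even, $\prod_{i}\bigl(\ux^{\alpha(i)}\bigr)^{\lambda_i}=\prod_j |x_j|^{\beta_j}=|\ux^\beta|$ rather than $\ux^\beta$, so the AM--GM bound you should display is $\sum_i f_{\alpha(i)}\ux^{\alpha(i)}\ge \Theta_f\,|\ux^\beta|$; your sign-flipping step (which leaves every $\ux^{\alpha(i)}$ unchanged while negating $\ux^\beta$) already delivers exactly this, so the substance of both directions is unaffected, but the identity as written is only literally valid where $\ux^\beta\ge 0$.
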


While $\tilde{P}_{n,A}$ has a nice explicit description, it has a minor technical drawback of being neither open, nor closed nor a convex cone ($\tilde{P}_{n,A}$ is missing the zero polynomial for being a convex cone). Essentially, the cone $P_{n,A}$ of non-negative polynomials whose support is a subset of $A$, defined by equality \eqref{PnA:eq} in the introduction, is a `regular' version of $\tilde{P}_{n,A}$ with a completely analogous description: 

\begin{lem}
	\label{lem:circuit}
	In the notation of Definition~\ref{def:ncp}, the set $P_{n,A}$ is described as 
	\begin{align*}
		P_{n,A} & = \setcond{f = \sum_{\alpha \in A} f_\alpha \ux^\alpha}{f_{\alpha(0)} \ge 0, \ldots, f_{\alpha(k)} \ge 0, \ f_\beta \ge -\Theta_f}  & & \text{if} \ \beta \in (2 \Z_+)^n
	\end{align*}
	and 
	\begin{align*}
		P_{n,A} & = \setcond{f = \sum_{\alpha \in A} f_\alpha \ux^\alpha}{f_{\alpha(0)} \ge 0, \ldots, f_{\alpha(k)} \ge 0, \ \Theta_f \ge f_\beta \ge -\Theta_f}  & & \text{if} \ \beta \not\in (2 \Z_+)^n.
	\end{align*}	
\end{lem}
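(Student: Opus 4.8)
The plan is to derive Lemma~\ref{lem:circuit} from Theorem~\ref{thm:idw} by a limiting/closure argument, exploiting the fact that $P_{n,A}$ is essentially $\tilde P_{n,A}\cup\{0\}$ with the strict positivity constraints on the simplex-vertex coefficients relaxed to non-strict ones. Concretely, $P_{n,A}$ consists of all $f=\sum_{\alpha\in A}f_\alpha\ux^\alpha$ with $f\ge 0$ on $\R^n$, whereas $\tilde P_{n,A}$ additionally requires $f_{\alpha(i)}>0$ for $i=0,\dots,k$. So the first step is to observe that the right-hand sides in the claimed description of $P_{n,A}$ are exactly what one obtains by relaxing ``$f_{\alpha(i)}>0$'' to ``$f_{\alpha(i)}\ge 0$'' in the description of $\tilde P_{n,A}$ from Theorem~\ref{thm:idw}; note also that $\Theta_f$ from \eqref{Theta:f} is still well-defined (and continuous, in fact a weighted geometric mean) as long as $f_{\alpha(i)}\ge 0$ for all $i$, and that $\Theta_f=0$ as soon as some $f_{\alpha(i)}=0$.

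Next I would prove the two inclusions. For ``$\subseteq$'': take $f\in P_{n,A}$, so $f\ge 0$ on $\R^n$. Evaluating $f$ along the monomial curves/at suitable points shows each $f_{\alpha(i)}\ge 0$ — here one uses that $\alpha(i)\in(2\Z_+)^n$ so $\ux^{\alpha(i)}\ge 0$, and that $\alpha(0),\dots,\alpha(k)$ are the vertices of the Newton polytope, so by a standard fact about non-negative polynomials the coefficient at each even vertex of the Newton polytope is non-negative (and if some $f_{\alpha(i)}$ vanished, $f$ would be forced to have support omitting that vertex). The bound $f_\beta\ge-\Theta_f$ (and $f_\beta\le\Theta_f$ when $\beta\notin(2\Z_+)^n$) when all $f_{\alpha(i)}>0$ is immediate from Theorem~\ref{thm:idw}, since then $f$ (if nonzero, hence $\ne$ the excluded polynomial) lies in $\tilde P_{n,A}$; the only remaining case is when some $f_{\alpha(i)}=0$, where $\Theta_f=0$ and one argues directly from the AM–GM / circuit inequality that $f\ge 0$ together with $\ux^\beta$ being able to take both signs (or being non-negative) forces $f_\beta=0$ (resp. $f_\beta\ge 0$), which is consistent with the stated inequalities.

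For ``$\supseteq$'': take $f$ satisfying the right-hand-side conditions and show $f\ge 0$ on $\R^n$. If all $f_{\alpha(i)}>0$, then $f\in\tilde P_{n,A}\subseteq P_{n,2d}$ by Theorem~\ref{thm:idw} together with the definition of $\tilde P_{n,A}$, so $f\ge 0$. If some $f_{\alpha(i)}=0$, then $\Theta_f=0$, so the conditions force $f_\beta=0$ (in the case $\beta\in(2\Z_+)^n$, $f_\beta\ge 0$ and $f_\beta\ge -\Theta_f=0$ is automatic, but one sees $f_\beta\le 0$ as well by a perturbation — see below), and in any case $f$ is then a non-negative combination of the even monomials $\ux^{\alpha(i)}$ (perhaps plus the even monomial $\ux^\beta$ with a non-negative coefficient), hence non-negative. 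Alternatively, and more cleanly, one can get $\supseteq$ in all cases by a continuity/closure argument: perturb $f$ to $f_\varepsilon$ by adding $\varepsilon\ux^{\alpha(i)}$ to the vanishing vertex coefficients; then $f_\varepsilon$ has all vertex coefficients $>0$, and since $\Theta$ is continuous and $\Theta_{f_\varepsilon}\to\Theta_f$, the perturbed coefficient vector still satisfies the (now strict-positivity) hypotheses of Theorem~\ref{thm:idw} for $\varepsilon$ small, giving $f_\varepsilon\in\tilde P_{n,A}$, hence $f_\varepsilon\ge 0$ on $\R^n$; letting $\varepsilon\to 0$ gives $f\ge 0$, i.e. $f\in P_{n,A}$.

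The main obstacle I anticipate is the careful treatment of the boundary case where one or more of the simplex-vertex coefficients $f_{\alpha(i)}$ vanish: there $\Theta_f=0$, the circuit structure degenerates, and Theorem~\ref{thm:idw} does not directly apply (it presumes strict positivity). Handling this cleanly — either by the explicit argument that a non-negative polynomial supported on $A$ with a missing vertex coefficient is forced to be a non-negative combination of even monomials, or by the uniform perturbation argument above — is the only non-formal point; everything else is bookkeeping of the inequalities already packaged in Theorem~\ref{thm:idw}.
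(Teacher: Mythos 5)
Your proposal is correct and follows essentially the same route as the paper: nonnegativity of the vertex coefficients $f_{\alpha(i)}$ is extracted from $f\ge 0$ via monomial substitutions (equivalently, the Newton-polytope-vertex fact), the inequalities on $f_\beta$ are reduced to Theorem~\ref{thm:idw} by the $\varepsilon$-perturbation $f+\varepsilon\sum_i \ux^{\alpha(i)}$ together with a limit (where one should note $\Theta_{f_\varepsilon}\ge\Theta_f$, not merely continuity, to keep the inequalities valid after perturbing), and the degenerate case $\Theta_f=0$ is settled by observing that $f$ is then a nonnegative combination of even monomials. The only blemish is the parenthetical claim that $f_\beta\le 0$ is forced when $\beta\in(2\Z_+)^n$ and some $f_{\alpha(i)}=0$ --- this is false (e.g.\ $f=\ux^\beta$), but your argument never uses it, since you correctly conclude via ``a non-negative combination of even monomials.''
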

\begin{proof}
	First observe that, if $f$ is in $P_{n,A}$ then $f_{\alpha(i)} \ge 0$ holds for every $i=0,\ldots,k$. Let us show $f_{\alpha(0)} \ge 0$. Choose a vector $\gamma = (\gamma_1,\ldots,\gamma_n) \in \Z^n \setminus \{0\}$ such that 
	\[
		\sprod{\gamma}{\alpha(0)} > \sprod{\gamma}{\beta} > \sprod{\gamma}{\alpha(1)}=\cdots =\sprod{\gamma}{\alpha(k)}.
	\] 
	The vector $\gamma$ is an inner facet normal of the simplex with the vertices $\alpha(0),\ldots,\alpha(k)$.
	For $t \in \R_+$, we obtain 
	\[
		q(t):=f(t^{\gamma_1},\ldots,t^{\gamma_n}) = f_{\alpha(0)} t^{\sprod{\alpha(0)}{\gamma}} + f_\beta t^{\sprod{\beta}{\gamma}} + \left( \sum_{i=1}^k f_{\alpha(i)} \right) t^{\sprod{\alpha(1)}{\gamma}}.
	\]
	If one had $f_{\alpha(0)} < 0$, then $q(t)$ would be negative for a sufficiently large $t$. Hence $\alpha(0) \ge 0$, and analogously we obtain $f_{\alpha(i)} \ge 0$ for every $i=0,\ldots,k$.

	We introduce the $\epsilon$-perturbation of $f$ by 
	\begin{align*}
		f_\epsilon & := \sum_{i=0}^k (f_{\alpha(i)} + \epsilon) \ux^{\alpha(i)} + f_\beta \ux^\beta = f + \epsilon \sum_{i=0}^k \ux^{\alpha(i)}.
	\end{align*}
	Since $\alpha(0),\ldots,\alpha(k) \in (2 \Z_+)^n$, the non-negativity of $f$ implies the non-negativity of $f_\epsilon$ for every $\epsilon>0$. Thus, if $f \in P_{n,A}$, then $f_\epsilon$ is non-negative for every $\epsilon>0$. Since $f_\epsilon \in \tilde{P}_{n,A}$, applying a description of $\tilde{P}_{n,A}$ from Theorem~\ref{thm:idw}, and letting $\epsilon>0$ go to $0$, we derive the `$\subseteq$' parts of the equalities of our assertion.

	Conversely, if $\beta \in (2 \Z_+)^n$ and the inequalities $f_{\alpha(0)} \ge 0,\ldots, f_{\alpha(k)} \ge 0, f_\beta \ge - \Theta_f$ are fulfilled, then in the case $f_{\alpha(0)} > 0,\ldots,f_{\alpha(k)}>0$, one has $f \in \tilde{P}_{n,A} \subseteq P_{n,A}$, while in the case $f_{\alpha(i)}=0$ for some $i=0,\ldots,k$ one has $\Theta_f=0$ so that $f$ is a sum of squares of $k+2$ monomial terms. 
	
	Similarly, if $\beta \not\in (2 \Z_+^n)$, then carrying out the same case distinction, we conclude that one has $f \in \tilde{P}_{n,A} \subseteq P_{n,A}$ or, otherwise, $f$ is a sum of squares of $k+1$ monomial terms.
\end{proof}

Comparing Lemma~\ref{lem:circuit} and Theorem~\ref{thm:idw}, we see that every polynomial $f$ in $P_{n,A}$ is either in $\Tilde{P}_{n,A}$ or is a non-negative linear combination of squares of monomials. 

\begin{dfn}[SONC polynomials \cite{MR3481195,MR3691721}]
	\label{def:C:n:d}
	Let $n, d \in \N$. Using the set $\cA_{n}$ from Definition~\ref{def:ncp}, we define 
	\[
		\cA_{n,2d} := \setcond{A \in \cA_n}{|\alpha| \le 2d \ \text{for all} \ \alpha \in A}
	\]
	Let $C_{n,2d}$ be the set of all polynomials $f \in \R[\ux]_{2d}$ that can be written as 
	\[
		f = \mu_1 f_1 + \cdots + \mu_N f_N,
	\]
	where $N \in \N$, $\mu_1,\ldots,\mu_N \ge 0$, and, for every $i \in \{1,\ldots, N\}$, the polynomial $f_i$ is 
	\begin{enumerate}
		\item \label{circ:pol} either an element of $\Tilde{P}_{n,A}$ for some $A \in \cA_{n,2d}$ 
		\item \label{mon:square} or a square $f_i = \ux^{2 \alpha}$ of some monomial $\ux^\alpha$ with $\alpha \in \Z_+^n$ and $|\alpha| \le d$. 
	\end{enumerate}
	
	Polynomials from $C_{n,2d}$ are called \emph{sums of non-negative circuit (SONC) polynomials} of degree at most $2d$ in $n$ variables. 
\end{dfn}

\begin{rem}
	From definitions given in 
	\cite{MR3481195,MR3691721} it is not immediately clear if monomial squares $x^{2 \alpha}$ are supposed to be SONC polynomials.  According to explanations given by Timo~De~Wolff~\cite{de:wolff:private}, the authors of \cite{MR3481195,MR3691721} did intend to view monomial squares as degenerate SONC polynomials. In Definition~\ref{def:C:n:d}, the `shape' of the cone $C_{n,2d}$ is determined by Condition~\ref{circ:pol}, while adding monomial squares via Condition~\ref{mon:square} makes the cone $C_{n,2d}$ topologically closed. 
\end{rem}

\begin{rem}
In view of Theorem~\ref{thm:idw} and Lemma~\ref{lem:circuit}, every element of $P_{n,A} \setminus \Tilde{P}_{n,A}$, with $A \in \cA_n$, is a conic combination of monomial squares. This shows that, for $n,d \in \N$, the SONC cone $C_{n,2d}$ can be represented as 
\begin{equation}
	\label{reg:SONC:sum}
	C_{n,2d} = \sum_{A \in \cA_{n,2d}} P_{n,A}.
\end{equation}
Equality \eqref{reg:SONC:sum} is a non-technical alternative definition of $C_{n,2d}$.
\end{rem}

We can easily determine the semidefinite extension degree of $C_{n,2d}$ from \eqref{reg:SONC:sum}, as $\sxd(P_{n,A})$ for $A \in \cA_{n,2d}$ can be calculated using the following result from \cite{MR1857264}.

\begin{lem}[Ben-Tal and Nemirovski {\cite[Example~15 in \S\S2.3.5]{MR1857264}}]
	\label{lem:BT:N}
	Let $\lambda_1,\ldots,\lambda_m$ be positive rational numbers satisfying $\lambda_1 + \cdots + \lambda_m \le 1$. Then, for the set
	\[
		C := \setcond{(x_1,\ldots,x_m,x_{m+1}) \in \R_+^m \times \R}{x_{m+1} \le x_1^{\lambda_1} \cdots x_m^{\lambda_m}},
	\]
	one has $\sxd(C) \le 2$.
\end{lem}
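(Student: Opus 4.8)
The plan is to prove Lemma~\ref{lem:BT:N} by explicitly constructing a second-order cone extended formulation for the set $C$, i.e.\ an $(L_3)^r$-lift for some finite $r$, and then invoking Proposition~\ref{prop:soc} to conclude $\sxd(C) \le 2$. The workhorse is the elementary fact that the hypograph of the scalar geometric mean of two variables, namely $\setcond{(a,b,c) \in \R_+^2 \times \R}{c \le \sqrt{ab}}$, has an $L_3$-lift: the inequality $c^2 \le ab$ with $a,b \ge 0$ is equivalent to $\left(\tfrac{a-b}{2}\right)^2 + c^2 \le \left(\tfrac{a+b}{2}\right)^2$, which after a linear change of variables is exactly the condition $(u,v,w) \in L_3$. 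So a single rotated second-order cone constraint captures a geometric mean of two terms, and each such constraint is an $L_3$-lift.

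First I would reduce from rational weights to the dyadic case. Write each $\lambda_j = p_j/q$ with a common denominator $q$; choosing $q$ to be a power of $2$ is not possible in general, so instead I would pad: introduce the slack $x_{m+1}' := x_1^{1-(\lambda_1+\cdots+\lambda_m)}\cdot(\text{dummy variable fixed to }1)$ or, more cleanly, augment the list of weights with one extra weight $\lambda_0 := 1 - \sum_j \lambda_j \ge 0$ and an extra variable $x_0$ that we are free to set large, so that WLOG $\sum_{j} \lambda_j = 1$. Then each weight is $p_j / q$ with $\sum p_j = q$, and by further splitting a variable $x_j$ into $p_j$ identical copies $y_{j,1}=\cdots=y_{j,p_j}=x_j$ (each introducing one linear equality, harmless in a lift), I reduce to the case of $q$ variables each with weight $1/q$, i.e.\ to showing that $\setcond{(z_1,\ldots,z_q,t)\in \R_+^q\times\R}{t \le (z_1\cdots z_q)^{1/q}}$ has an $(L_3)^r$-lift. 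Rounding $q$ up to the next power of $2$, say $q \le 2^\ell$, and repeating a variable as needed, I reduce to $q = 2^\ell$.

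Next I would build the geometric mean of $2^\ell$ nonnegative numbers as a balanced binary tree of pairwise geometric means: introduce auxiliary variables at each internal node equal to the geometric mean of its two children, each such relation being one rotated SOC constraint as above; the root variable is then $(z_1\cdots z_{2^\ell})^{1/2^\ell}$, and the constraint $t \le \text{root}$ is a single linear inequality. The total number of SOC constraints is $2^\ell - 1$, a finite number depending only on the $\lambda_j$, so this exhibits $C$ (after the linear reductions and projections above) as the image under a linear map of a slice of $(L_3)^{2^\ell-1}$ intersected with an affine subspace encoding all the equalities. By Proposition~\ref{prop:soc}, $\sxd(C) \le 2$.

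The main obstacle is bookkeeping rather than mathematics: one must be careful that every reduction step (padding with $\lambda_0$, splitting variables into copies, rounding $q$ up to a power of two) is realized by a \emph{linear} map and affine constraints so that the $(L_3)^r$-lift property is preserved under Proposition~\ref{prop:soc}, and that the directions of the inequalities are compatible — each tree-node relation should be stated as the node variable being $\le$ the geometric mean of its children (so that larger values propagate correctly up to the root and the final $t \le \text{root}$ is the right direction), while the nonnegativity of all the auxiliary variables is automatic. One also needs the trivial monotonicity remark that $t \le (z_1\cdots z_q)^{1/q}$ with all $z_i \ge 0$ is equivalent to the existence of the intermediate tree variables with the stated one-sided inequalities, which follows from the fact that the geometric mean is monotone in each argument. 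None of this requires Theorem~\ref{thm:ramsey} or any of the machinery of the main theorem; it is a purely constructive verification, and the content of the lemma is exactly this reduction of a rational-weight geometric mean to $O(2^\ell)$ elementary rotated second-order cones.
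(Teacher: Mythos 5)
Your overall route is the same one the paper takes: the paper's proof of this lemma is a one-line delegation to the explicit construction of Ben-Tal and Nemirovski, and what you write out is precisely that construction (a balanced tree of two-term geometric means, each realized as a rotated second-order cone, i.e.\ an $L_3$ constraint after the linear change of variables $\bigl(\tfrac{a-b}{2}, w, \tfrac{a+b}{2}\bigr)$, followed by Proposition~\ref{prop:soc}). So the plan is right and matches the source the paper cites. However, two of your reduction steps would produce the wrong set as literally stated.

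First, the padding to force $\sum_j \lambda_j = 1$: introducing $x_0$ with weight $\lambda_0 = 1-\sum_j\lambda_j$ and leaving it ``free to set large'' does not work. The projection of $\setcond{(x_0,x,t)}{x_0\ge 0,\; t\le x_0^{\lambda_0}x_1^{\lambda_1}\cdots x_m^{\lambda_m}}$ strictly contains $C$ whenever $\lambda_0>0$ (for $m=1$, $\lambda_1=\tfrac12$, the point $(x_1,t)=(1,100)\notin C$ is obtained by taking $x_0=10^4$). You must pin $x_0=1$ by an affine constraint inside the lift, as your first (parenthetical) variant suggests. Second, and more substantively, after splitting into $q$ equal weights $1/q$ you cannot ``round $q$ up to $2^\ell$ by repeating a variable'': filling the extra $2^\ell-q$ leaves with a copy of some $z_i$ changes the exponents, and filling them with the constant $1$ changes $(z_1\cdots z_q)^{1/q}$ into $(z_1\cdots z_q)^{1/2^\ell}$. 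The standard (and Ben-Tal--Nemirovski's) device is self-padding: fill the remaining leaves with copies of the root variable $u$ itself, so that the tree encodes $u\le\left(z_1\cdots z_q\, u^{2^\ell-q}\right)^{1/2^\ell}$ with $u\ge 0$, which is equivalent to $u\le(z_1\cdots z_q)^{1/q}$; then impose $t\le u$. With these two corrections (and the routine conversion of the residual linear inequalities into $L_3$ factors), your argument is complete and coincides with the cited construction.
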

\begin{proof}
	Constructions in \cite[\S\S2.3.5]{MR1857264} yield an explicit second-order cone extended formulation of $C$, which shows $\sxd(C) \le 2$. 
\end{proof}

\begin{proof}[Proof of Theorem~\ref{sxd:SONC}]
	Equality \eqref{reg:SONC:sum} describes $C_{n,2d}$ as a sum of finitely many closed convex cones. Hence, applying Lemma~\ref{lem:sxd:sum} to the cone $C_{n,2d}$ represented by \eqref{reg:SONC:sum}, we obtain 
	\[
		\sxd(C_{n,2d}) \le \max \setcond{P_{n,A}}{A \in \cA_{n,2d}}.
	\]
	The description of cones $P_{n,A}$ given in Lemma~\ref{lem:sxd:sum} is in terms of non-strict linear inequalities $f_{\alpha(0)} \ge 0,\ldots, f_{\alpha(k)} \ge 0$ and the inequalities which coincide, up to a rescaling of the variables, with the inequalities describing the set $C$ in Lemma~\ref{lem:BT:N}. Thus, Lemma~\ref{lem:BT:N} yields $\sxd(P_{n,A}) \le 2$ for every $A \in \cA_{n,2d}$ and we obtain $\sxc(C_{n,2d}) \le 2$. 
	
	It remains to show $\sxd(C_{n,2d}) \ge 2$, which means that $C_{n,2d}$ is not a polyhedron. One way to see this is to use Theorem~\ref{thm:tool} in the degenerate case $k=1$. Take $S \subseteq \R^n$ to be an arbitrarily large finite subset of the $x_1$-axis $\R \times \{0\}^{n-1}$. For each $s = (s_1,0,\ldots,0) \in S$, the quadratic polynomial $f = (x_1 - s_1)^2 \in \R[\ux]$ belongs to $C_{n,2d}$ and is equal to zero on exactly one point of $S$. So, by Theorem~\ref{thm:tool},  $\sxd(C_{n,2d}) > 1$. 
\end{proof}

\begin{proof}[Proof of Corollary~\ref{cor:SOS+SONC}]
	If $n=1$ or $d=1$ or $(n,d)=(2,2)$, then $\Sigma_{n,2d} = P_{n,2d}$ by Theorem~\ref{thm:hilbert}. Hence $\Sigma_{n,2d} + C_{n,2d} = P_{n,2d}$. 
		
	In the case $n,d \ge 2$ and $(n,d) \ne (2,2)$, Theorem~\ref{thm:scheiderer} of Scheiderer  asserts that  $P_{n,2d}$ has no semidefinite extended formulation. On the other hand, in view Lemma~\ref{lem:sxd:sum}, the cone $\Sigma_{n,2d} + C_{n,2d}$ \emph{does} have a semidefinite extended formulation since both summands $\Sigma_{n,2d}$ and $C_{n,2d}$ have a semidefinite extended formulation, by Corollary~\ref{cor:sxd:sos} and Theorem~\ref{sxd:SONC}, respectively. This implies $\Sigma_{n,2d} + C_{n,2d} \ne P_{n,2d}$.
\end{proof}

\subsection*{Acknowledgements} 

I thank Jonas Frede for pointing to \cite{ahmadi2017improving}. Theorem~\ref{sxd:SONC} and Corollary~\ref{cor:SOS+SONC} were motivated by the discussion with Andreas Bernig, Mareike Dressler, Raman Sanyal and Thorsten Theobald during the defense of Mareike's PhD thesis \cite{Dressler:Diss}. I would like to thank Timo de Wolff for clarifying the definition of the SONC cone. 

\subsection*{Funding}

The project is funded by the Deutsche Forschungsgemeinschaft (DFG, German Research Foundation) - 314838170, GRK 2297 MathCoRe.

\bibliographystyle{amsalpha}
\bibliography{literature}

\end{document}